\documentclass[letter,10pt]{amsart}

\usepackage[latin1]{inputenc}   
\usepackage{amssymb,amsmath,amsthm,mathrsfs}
\usepackage{graphicx}
\usepackage[all]{xy}
\usepackage[usenames,dvipsnames]{color}
\usepackage{enumitem}

\usepackage[colorlinks=true,pagebackref]{hyperref}

%--------------Layout------------
\parindent 0mm

\addtolength{\textwidth}{2cm}
\addtolength{\evensidemargin}{-1.1cm}
\addtolength{\oddsidemargin}{-1.1cm}

%%%%%%%%%  tikz commands
 \usepackage{tikz}
\usetikzlibrary{arrows,decorations.pathmorphing,decorations.pathreplacing,positioning,shapes.geometric,shapes.misc,decorations.markings,decorations.fractals,calc,patterns,matrix}

\tikzset{>=stealth',
     cvertex/.style={circle,draw=black,inner sep=1pt,outer sep=3pt},
     vertex/.style={circle,fill=black,inner sep=1pt,outer sep=3pt},
     star/.style={circle,fill=yellow,inner sep=0.75pt,outer sep=0.75pt},
     tvertex/.style={inner sep=1pt,font=\criptsize},
     gap/.style={inner sep=0.5pt,fill=white}}
%%%%%%%%%%%%%%%%%

%--------------Makros

\newcommand{\Z}{\ensuremath{\mathbb{Z}}}
\newcommand{\C}{\ensuremath{\mathbb{C}}}

\newcommand{\N}{\ensuremath{\mathbb{N}}}

\newcommand{\ra}{\ensuremath{\rightarrow}}
\newcommand{\lra}{\ensuremath{\longrightarrow}}

 %affiner Raum
 %mathcal O
 %mathcal R
\newcommand{\tr}{\operatorname{\mathfrak{T}}\nolimits}

%Makros
\DeclareMathOperator{\Spec}{Spec}

\DeclareMathOperator{\Hom}{Hom}
\DeclareMathOperator{\Ext}{Ext}
\DeclareMathOperator{\End}{End}
\DeclareMathOperator{\add}{add}

\newcommand{\mc}[1]{\ensuremath{\mathcal{#1}}}   %mathcal
   %mathscr
\newcommand{\mf}[1]{\ensuremath{\mathfrak{#1}}}   %mathfrak

\newcommand{\gl}{\operatorname{gl.dim}\nolimits}

\newcommand{\gs}{\operatorname{gs}\nolimits}
\newcommand{\img}{\operatorname{Im}\nolimits}
\newcommand{\mmod}[1]{\operatorname{mod}(#1)}

\newcommand{\GL}{\operatorname{GL}}
\newcommand{\CM}{\operatorname{{MCM}}}

\newcommand{\pd}{\operatorname{proj.dim}\nolimits}

%-------\theoremstyle{}
\theoremstyle{theorem}
\newtheorem{Thm}{Theorem}[section]         
\newtheorem{Lem}[Thm]{Lemma}
\newtheorem{Cor}[Thm]{Corollary}
   
\newtheorem{Prop}[Thm]{Proposition}   
\newtheorem{Conj}[Thm]{Conjecture}
\newtheorem{Qu}[Thm]{Question}

\theoremstyle{remark}
\newtheorem{Bem}[Thm]{Remark}
\newtheorem{ex}[Thm]{Example}%[chapter] 
\newtheorem{Construction}[Thm]{Construction}%[chapter] 

\theoremstyle{definition}
\newtheorem{defi}[Thm]{Definition} %neu

%-----Counter
\setcounter{section}{0}
\setcounter{tocdepth}{2}

%opening
\title{Computing global dimension of endomorphism rings via ladders}

\author{Brandon Doherty}
\address{Department of Mathematics and Statistics, University of New Brunswick, Fredericton, NB. E3B 5A3, Canada}
\email{Brandon.Doherty@unb.ca}

\author{Eleonore Faber}
\address{Fakult\"at f\"ur Mathematik,
Universit\"at Wien, 
Oskar-Morgenstern-Platz 1,
A-1090 Wien,
Austria}
\curraddr{Department of Mathematics, University of Michigan, Ann Arbor, MI 48109, USA}
\email{eleonore.faber@univie.ac.at}

\author{Colin Ingalls}
\address{Department of Mathematics and Statistics, University of New Brunswick, Fredericton, NB. E3B 5A3, Canada}
\email{cingalls@unb.ca}

\date{\today}
\thanks{
\noindent 
2010 Mathematics Subject Classification:  16G30, 13C14,  16E10, 16G70, 14B05 \\
%{\it Keywords}: \\
B.D.~was partially supported by an NSERC Undergraduate Summer Research Award.
E.F. was partially supported by the Austrian Science Fund (FWF) in frame of project J3326 and gratefully acknowledges support by the Institut Mittag-Leffler (Djursholm, Sweden). 
C.I.~was partially supported by an NSERC Discovery Grant.
} 

%%%%%%%%%
\begin{document}
%%%%%%%%%

\begin{abstract}
This paper deals with computing the global dimension of endomorphism rings of maximal Cohen--Macaulay (=$\CM$) modules over commutative rings.   Several examples are computed. In particular, we determine the global spectra, that is, the sets of all possible finite global dimensions of endomorphism rings of $\CM$-modules, of the  curve singularities of type $A_n$ for all $n$, $D_n$ for $n \leq  13$ and $E_{6,7,8}$ and compute the global dimensions of Leuschke's normalization chains for all ADE curves, as announced in \cite{DFI}. Moreover, we determine the centre of an endomorphism ring of a $\CM$-module over any curve singularity of finite $\CM$-type. \\ 
In general, we describe a method for the computation of the global dimension of an endomorphism ring $\End_R M$, where $R$ is a Henselian local ring, using $\add(M)$-approximations. When $M\neq 0$ is a $\CM$-module over $R$ and $R$ is Henselian local of Krull dimension  $\leq 2$ with a canonical module and of finite $\CM$-type, we use Auslander--Reiten theory and Iyama's ladder method to explicitly construct these approximations. 
\end{abstract}

\maketitle

{\footnotesize\tableofcontents}

%%%%%%%%%%%
\section{Introduction}
%%%%%%%%%%%

Let $R$ be a commutative noetherian ring,  $M\neq 0$ be a $\CM$-module over $R$ and set $A=\End_R M$. This article is concerned with the problem of computing the global dimension of $A$, that is, the smallest number $n$ such that any $A$-module has a projective resolution of length $\leq n$. In most cases, $n$ will be infinity: for example, if $A$ itself is commutative, then $\gl A < \infty$ if and only if $A$ is a regular ring, by Serre's well-known theorem. But if $A$ is noncommutative, the situation is much more involved and  one is precisely interested in this case: in recent years the study of endomorphism rings of finite global dimension has become increasingly popular, since they appear as analogues of resolutions of singularities of $\Spec(R)$, namely as so-called noncommutative (crepant) resolutions of singularities (=NC(C)Rs). In the original treatment of van den Bergh \cite{vandenbergh04} NCCRs over Gorenstein normal domains $R$ were defined as homologically homogeneous endomorphism rings of finitely generated reflexive $R$-modules, which implies that their global dimension is automatically equal to the Krull dimension of $R$. NCCRs were further studied in, e.g., \cite{BLvdB10, IyamaWemyss10} and see \cite{Leuschke12} for an overview. In \cite{DaoIyamaTakahashiVial} and \cite{DFI}, more general NCRs of any commutative ring $R$ were defined as endomorphism rings of finitely generated modules of full support and of finite global dimension. In \cite{DFI} the \emph{global spectrum} of $R$ was introduced as the set of all possible finite global dimensions of endomorphism rings of $\CM$-modules\footnote{We only consider $\CM$-modules because there is a rich structure theory which also features representation theoretic methods, having their origin in representation theory of Artin algebras, see \cite{LeuschkeWiegand, Yoshino90}.}. The global spectrum is a quite mysterious object and so far there have only been very few explicit examples computed. Thus, obtaining more examples of global spectra was one of the main motivations for the present work. \\
 It is supposed that the global spectrum is strongly related to categorical invariants, like the Orlov spectrum, which appears in birational geometry \cite{BallardFaveroKatzarkov}. Moreover, knowing more about possible values of global dimensions of endomorphism rings also can be helpful in understanding more classical topics in commutative algebra, in particular, the Grauert--Remmert normalization algorithm for curves, see \cite{Leuschke07} and \cite{IyamaRejective}, where endomorphism rings appearing in this algorithm are studied.  
 \\

In this paper we present a method to compute $\gl A$, which uses combinatorics on a particular quiver related to $R$, the Auslander--Reiten (=AR) quiver.
We will use \emph{ladders}, which have originally been introduced by Igusa and Todorov in order to prove the radical layers theorem for Artin algebras \cite{IgusaTodorovRadical}. In 2005, Iyama generalized their methods to orders
\cite{IyamaTau1,IyamaTau2, IyamaTau3}.  
More recently Iyama and  Wemyss used ladders to compute syzygies of $\CM$-modules over two-dimensional quotient singularities, see \cite{IyamaWemyss10a} and also Wemyss in \cite{WemyssGL}. In these papers, the authors used ladders to compute $\add(R)$-approximations in order to compute syzygies. On the other hand, Quarles used ladders implicitly in order to compute NCRs of some two-dimensional quotient singularities in his thesis \cite{Quarles}. \\
The present paper grew out of trying to understand their methods and applying them in order to  explicitly compute $\add(M)$-approximations of $\CM$-modules and from these global dimensions of endomorphism rings. We follow the treatment of Iyama and Wemyss closely but consider ladders in a slightly different context, in particular for $1$-dimensional Cohen--Macaulay-rings. Moreover, we try to formulate the theorems in the most general context, so instead of complete local rings, we consider Henselian local rings. We also study the boundaries of generalizing this method. In particular, requirements on the dimension of $R$ and finite $\CM$-type.  \\

%Having used the ladder method to construct $\add(M)$-approximations, for $M\neq 0 \in \CM(R)$, we show how to get from these approximations to projective resolutions of modules over $\End_R M$. We give an algorithm to compute the global dimension of $\End_R M$. The  input data for the algorithm are $M$ and the AR-quiver of $R$. \\

Let us comment on the specific results: 
We give an algorithm to compute the global dimension of $\End_R M$. The  input data for the algorithm are $M$ and the AR-quiver of $R$. The algorithm enables us to compute the global spectrum of $R$, namely the set of all possible finite global dimensions of endomorphism rings of $\CM(R)$-modules. 
Here we will compute the global spectrum of the $A_n$ for all $n$, $E_{6,7,8}$ and $D_n$ for $n \leq 13$, curve singularities (Thm.~\ref{Thm:globalspecsimple})  and give conjectural global spectra for the remaining $D_n$ curve singularities. The evidence for these conjectures are based on computations of a SAGE program written by B.~Doherty. We also prove the claimed global dimension of  Leuschke's \cite{Leuschke07} endomorphism rings of  \cite[Section 3.1.1]{DFI} in Section \ref{Sub:Leuschke}. Moreover, we  give a few examples of computations of global spectra of some two dimensional singularities, using the SAGE program.\\

In noncommutative algebra, the centre $Z(A)$ of a noncommutative algebra $A$, is a fundamental invariant. For endomorphism rings $A=\End_R M$ over a reduced noetherian commutative ring and $M$ a faithful torsion free module, we show that $Z(A)$ is the largest finite extension $R \subseteq R' \subseteq Q(R)$ such that $M$ is still defined over $R'$, i.e., $R'M =M$, where multiplication is induced by the multiplication of  $Q(R)$ on $Q(R)M$ (see Theorem \ref{Thm:centres}).  In the case of endomorphism rings over curves $R$ of finite $\CM$-type we are able (by a case by case analysis) to determine the centres explicitly. This also shows how the AR-quivers of overrings of $R$ sit in the AR-quiver of $R$. Knowing the centres of the endormorphism ring allows a significant decrease in running time of the computation of global dimension. \\

An outline of the paper is as follows:  in Section \ref{Sec:general} and \ref{Sec:tauladders} the necessary notions and techniques for computing the global dimension of endomorphism rings using ladders are recalled. We show that they work in our Henselian setting and give complete statements rather than to refering to the literature. In particular, we revisit the Iyama--Wemyss theorem about the existence of ladders in our Henselian setting, see Theorem \ref{Thm:ladder}.  \\
The main examples are computed in Section \ref{Sec:examples}: in \ref{Sub:Leuschke} we prove the claimed global dimensions for the ADE curve singularities from \cite{DFI}. In Theorem \ref{Thm:specAn} we prove using ladders that the global spectrum of $A_n$ curves is $\{1,2\}$ for even $n$ and $\{1,2,3\}$ for odd $n$. In \ref{Sub:globalspecsimple}  the global spectra of ADE curves of types $A_n$, $D_n$ ($n \leq 13$), $E_{6,7,8}$ are computed with the help of a SAGE program, which is explained there. We also compute some global spectra of surfaces of finite $\CM$ type. Then we discuss examples and the case of infinite $\CM$-type in Section \ref{Sub:infiniteCM}. In particular, we show in Prop.~\ref{Prop:infiniteGorenstein} that for a normal $2$-dimensional Gorenstein Henselian local ring, ladders will not yield an $\add(M)$-approximation in general. \\
In Section \ref{Sec:centre} we determine centres of endomorphism rings of torsion free modules over reduced noetherian rings, Theorem \ref{Thm:centres}. In the following the centres of endomorphism rings of all $\CM$ modules over one dimensional local rings of finite $\CM$-type are explicitly determined.

%%%%%%
\section{Setting and generalities} \label{Sec:general}
%%%%%%

In this section we recall several notions and discuss how to construct minimal projective resolutions of simple modules of endomorphism rings of $\CM$-modules over certain rings $R$, using approximations of modules over $R$. \\

In this paper $R$ will always denote a commutative noetherian ring. We will assume that $R$ is either artinian or local Henselian of positive Krull dimension. In the latter case we will try to state the theorems for $R$ as general as possible, but all theorems work if one assumes $R$ complete local (plus additional hypotheses on dimension etc.).   Any full subcategory of $\CM(R)$ should be closed under direct summands and finite direct sums and will be called {\it additive}. \\

We denote by $\mmod{R}$  the category of finitely generated modules and by $\CM(R)$ the full subcategory of maximal Cohen--Macaulay modules. Let $M \in \mmod{R}$ and denote by $\add(M)$ the \emph{additive closure of $M$}, that is, the full subcategory of $\mmod{R}$ consisting of direct summands of finite direct sums of $M$. \\
Let $\mc{C}$ be a full subcategory of $\mmod{R}$, then morphisms between two objects $X, Y \in \mc{C}$ are denoted by $\Hom_\mc{C}(X,Y)= \mc{C}(X,Y)$. 
Note that, since $\mc{C}$ is full, $\mc{C}(X,Y)=\Hom_R(X,Y)$. %this is by def, see Hilton-Stammbach
The Jacobson radical of a (not necessarily commutative) ring $\Lambda$, i.e., the intersection of all right maximal ideals of $\Lambda$, 
is denoted by ${\bf J}_\Lambda$. We will also need some more general notions concerning categories: Let $\mc{C}$ be an additive category, then an \emph{ideal $\mc{I}$ in \mc{C}} is a selection of a subgroup $\mc{I}(X,Y) \subseteq \Hom_\mc{C}(X,Y)$ for any $X,Y \in \mathrm{Ob}(\mc{C})$ such that for any $h \in \mc{C}(X,Y)$, $f \in \mc{I}(Y,Z)$ and $g \in \mc{C}(Z,W)$ one has $gfh \in \mc{I}(X,W)$ (cf.~\cite{KellyRadical}). We define the Jacobson radical, an ideal of $\mc{C}$  by ${\bf J}_\mc{C}(X,Y)=\{ f \in \Hom_{\mc{C}}(X,Y): hfg$ is not an isomorphism for any $g: A \ra X$ and $h: Y \ra A$ for any indecomposable $A \in \mc{C} \}$. The \emph{higher radicals} ${\bf J}^n_{\mc{C}}(X,Y)$ for $n \geq 2$ are defined to be $\{f \in \Hom_{\mc{C}}(X,Y)$: there exist an $A \in \mc{C}$ and morphisms $g \in {\bf J}_{\mc{C}}(X,A)$ and $h \in {\bf J}_{\mc{C}}^{n-1}(A,Y)$ with $f=hg \}$. For a full subcategory $\mc{D}$ of $\mc{C}$ one denotes by $[\mc{D}]$ the ideal of $\mc{C}$ of all morphisms in $\mc{C}$ which factor through an object in $\mc{D}$. The \emph{factor category} $\mc{C}/[\mc{D}]$ consists of the same objects as $\mc{C}$ and the morphisms between two objects $X,Y$ are $\Hom_R(X,Y)$ modulo those factoring through an object of $\mc{D}$. An object $X$ in $\mc{C}$ is called \emph{(strongly) indecomposable} if $\End_\mc{C} X$ is local.  A category $\mc{C}$ is called \emph{Krull-Schmidt} if any object is isomorphic to a finite direct sum of indecomposable objects. \\
Let $\Lambda$ be a noetherian semi-perfect ring. Then any finitely generated $\Lambda$-module $X$ has a \emph{minimal projective resolution}:
$$ \cdots \xrightarrow{} P_2 \xrightarrow{f_2} P_1 \xrightarrow{f_1} P_0 \xrightarrow{f_0} X \lra 0,$$
that is, any $P_i \xrightarrow{f_i} \ker(f_{i-1}) \lra 0$ and $P_0 \lra X \lra 0$ are projective covers.  

\begin{Bem}
In order to compute global dimension of endomorphism rings, we will compute minimal projective resolutions over $\End_RM$. Thus,  we assume $R$ to be artinian or Henselian local for $\End_R M$ to be semi-perfect, see \cite[section 6]{Reiner75}. For such rings, $\mmod{R}$ (and hence $\CM(R)$) is a Krull--Schmidt category.  In Section \ref{Sec:tauladders}  we will consider generalized AR-theory over $R$, in the sense of \cite{IyamaTau1} and see which additional assumptions on $R$ are necessary to apply this theory.
\end{Bem}

The next lemma shows that it is enough to consider {\it basic} endomorphism rings of $\CM$-modules, that is, endomorphism rings of modules where every direct summand appears with multiplicity $1$. 
\begin{Lem} \label{Lem:MoritaReduced}
Let $A=\End_R(L)$, where $L=\bigoplus_{i=1}^n L_i^{a_i}$, where $L_i \in \CM(R)$ are indecomposables and $a_i \geq 1$. Then $A$ is Morita-equivalent to $A_{red}=\End_R(\bigoplus_{i=1}^n L_i)$. In particular $\gl A= \gl A_{red}$.
\end{Lem}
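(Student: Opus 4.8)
The plan is to invoke the classical fact that for an idempotent $e$ in a ring $A$, if the two-sided ideal $AeA$ equals $A$, then $A$ and $eAe$ are Morita equivalent, with the equivalence given by $Ae \otimes_{eAe} (-)$ and $eA \otimes_A (-)$. First I would set $L = \bigoplus_{i=1}^n L_i^{a_i}$ and write $A = \End_R(L)$. Choosing, for each $i$, the projection-inclusion idempotents onto a single fixed copy of $L_i$ inside $L$, and letting $e$ be the sum of one such idempotent for each $i$, one gets $eAe \cong \End_R\bigl(\bigoplus_{i=1}^n L_i\bigr) = A_{red}$. The point is then to check that $AeA = A$.

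To verify $AeA = A$, I would argue as follows: the identity $1_A = \mathrm{id}_L$ decomposes along the direct sum decomposition of $L$ into a sum of idempotents $e_{i,k}$, $k = 1,\dots,a_i$, projecting onto the $k$-th copy of $L_i$. Each $e_{i,k}$ is conjugate in $A$ to the chosen idempotent $e_i$ (the automorphism of $L$ permuting the $k$-th and the distinguished copy of $L_i$ is an invertible element of $A$), so $e_{i,k} = u\, e_i\, u^{-1} \in A e_i A \subseteq AeA$. Summing over all $i,k$ gives $1_A \in AeA$, hence $AeA = A$. This is really the only nontrivial point, and it is short; the rest is bookkeeping.

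With $AeA = A$ established, Morita equivalence of $A$ and $eAe = A_{red}$ follows from the standard theory (e.g. as in any treatment of Morita theory via idempotents). Since global dimension is a Morita invariant, $\gl A = \gl A_{red}$, which is the final assertion. I would remark that semi-perfectness of $A$ (guaranteed since $R$ is artinian or Henselian local, as noted above) ensures all idempotents behave well and that the reduction to the basic ring is the expected one, but it is not strictly needed for the Morita argument itself.

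I do not anticipate a genuine obstacle here; the only step requiring a moment's care is the conjugacy of the idempotents $e_{i,k}$ witnessing $1_A \in AeA$, which is immediate from the existence of the "copy-swapping" automorphisms of $L$. The lemma is essentially a restatement of the well-known fact that a finite-dimensional (or semi-perfect) algebra is Morita equivalent to its basic algebra, specialized to $A = \End_R(L)$.
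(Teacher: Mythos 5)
Your proof is correct and takes essentially the same approach as the paper, which simply cites the Morita-equivalence criterion of McConnell--Robson (Prop.~3.5.6) and calls the verification straightforward. You have in effect written out the details of that citation: choosing the idempotent $e$ projecting onto one copy of each $L_i$, identifying $eAe$ with $A_{red}$, and checking fullness ($AeA=A$) via the copy-swapping automorphisms.
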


The proof is straightforward, e.g., by applying the criterion for Morita equivalence \cite[Prop. 3.5.6]{McConnellRobson}.

%\begin{proof}
%This seems to be folklore in noncommutative ring theory. For a proof use the standard criterion: two rings $A, B$ are Morita equivalent if and only if there exists an idempotent $e$ and an integer $k$ such that $A \cong eM_k(B)e$ and $BeB=B$, see e.g., \cite[Prop. 3.5.6]{McConnellRobson}. In our case set $A$ as in the statement of the lemma and $B=A_{red}$. Then $A$ may be written in form of an $(\sum_{i=1}^n a_i) \times (\sum_{i=1}^n a_i)$-matrix. Take as idempotent $e=e_{11}+e_{a_1+1, a_1+1}+ \cdots +e_{a_{n-1}+1, a_{n-1}+1}$ and let $k=1$. Then direct computation shows that $eAe \cong A_{red}$ and $AeA=A$. Since global dimension is invariant under Morita equivalence, the second assertion also follows.
%\end{proof}

\begin{Lem} \label{Lem:radicaldecomposition}
Let $M, N$ be contained in $\mmod{R}$. Then ${\bf J}_{\mmod{R}}(M,N)=\Hom_R(M,N)$ if and only if $M$ and $N$ do not have any direct summand in common. Moreover, ${\bf J}(\bigoplus_i X_i, \bigoplus_j Y_j)=\bigoplus_{ij} {\bf J}(X_i,Y_j)$ for indecomposable $X_i$, $Y_i$. This decomposition also holds for the higher radicals, i.e.,  ${\bf J}^k(\bigoplus_i X_i, \bigoplus_j Y_j)=\bigoplus_{ij} {\bf J}^k(X_i,Y_j)$ for $k \geq 0$. 
\end{Lem}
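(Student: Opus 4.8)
The plan is to prove the three assertions in sequence, each reducing to the previous one. First I would establish the equivalence ${\bf J}_{\mmod{R}}(M,N)=\Hom_R(M,N)$ iff $M,N$ share no common indecomposable summand. For the ``if'' direction, suppose $M$ and $N$ have no summand in common and let $f\in\Hom_R(M,N)$. I must show that for every indecomposable $A$ and every pair $g\colon A\to M$, $h\colon N\to A$, the composite $hfg$ is not an isomorphism of $A$. If it were, then $A$ would be (isomorphic to) a direct summand of $M$ (split by $g$ after the iso) and also a direct summand of $N$, using that $R$ is Krull--Schmidt so idempotents split; this contradicts the no-common-summand hypothesis. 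For the ``only if'' direction, if $M$ and $N$ do have a common indecomposable summand $A$, then the composition $N\twoheadrightarrow A\hookrightarrow M$ followed by nothing — more precisely taking $f=\iota_M\circ\pi_N$ where $\pi_N\colon N\to A$ and $\iota_M\colon A\to M$ — fails to lie in ${\bf J}$, since with $g=\iota_N$, $h=\pi_M$ we get $hfg=\mathrm{id}_A$, an isomorphism. Hence ${\bf J}_{\mmod{R}}(M,N)\subsetneq\Hom_R(M,N)$.

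Second, for the direct-sum decomposition ${\bf J}(\bigoplus_i X_i,\bigoplus_j Y_j)=\bigoplus_{ij}{\bf J}(X_i,Y_j)$ with the $X_i,Y_j$ indecomposable, I would argue as follows. A morphism $f\colon\bigoplus_i X_i\to\bigoplus_j Y_j$ is a matrix $(f_{ji})$ with $f_{ji}\colon X_i\to Y_j$; the containment $\bigoplus_{ij}{\bf J}(X_i,Y_j)\subseteq{\bf J}$ follows because ${\bf J}$ is an ideal and hence closed under the component inclusions and projections of the direct sums and under addition. Conversely, if $f=(f_{ji})\in{\bf J}(\bigoplus X_i,\bigoplus Y_j)$, then each component $f_{ji}=\pi_j\circ f\circ\iota_i$ lies in ${\bf J}(X_i,Y_j)$, again because ${\bf J}$ is an ideal and $\pi_j,\iota_i$ are arbitrary morphisms in the category. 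Since $X_i,Y_j$ are indecomposable, ${\bf J}(X_i,Y_j)$ is the set of non-isomorphisms $X_i\to Y_j$ (when $X_i\cong Y_j$) or all of $\Hom_R(X_i,Y_j)$ (when $X_i\not\cong Y_j$, by the first part), which is a subgroup; so writing $f$ as the sum of its components $\iota_j\circ f_{ji}\circ\pi_i$ exhibits $f\in\bigoplus_{ij}{\bf J}(X_i,Y_j)$.

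Third, the statement for higher radicals: ${\bf J}^k(\bigoplus_i X_i,\bigoplus_j Y_j)=\bigoplus_{ij}{\bf J}^k(X_i,Y_j)$ for $k\geq 0$. I would do this by induction on $k$, the cases $k=0$ (trivially $\Hom$) and $k=1$ being already handled. For the inductive step, by definition ${\bf J}^k(\bigoplus X_i,\bigoplus Y_j)={\bf J}^{k-1}\circ{\bf J}$; inserting a decomposition $\bigoplus_\ell Z_\ell$ of an intermediate object into its indecomposables and applying the induction hypothesis componentwise, together with the matrix description of composition, gives that the $(j,i)$-component of any $f\in{\bf J}^k$ is a sum of composites $h_{j\ell}g_{\ell i}$ with $g_{\ell i}\in{\bf J}(X_i,Z_\ell)$ and $h_{j\ell}\in{\bf J}^{k-1}(Z_\ell,Y_j)$, hence lies in ${\bf J}^k(X_i,Y_j)$; the reverse inclusion is again immediate from ${\bf J}^k$ being an ideal. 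The main point requiring care — and the one place a Krull--Schmidt hypothesis is genuinely used — is the ``only if'' half of the first assertion and the splitting-of-idempotents argument; the rest is formal bookkeeping with ideals and matrices of morphisms, so I would present it briskly.
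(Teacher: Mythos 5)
Your proof is correct and follows the standard route; the paper itself only cites Quarles's thesis for the first two assertions and sketches precisely the same induction on $k$ (composing radical maps through an intermediate object decomposed into indecomposables) for the third. One minor slip: in the \emph{only if} half you build $f=\iota_M\circ\pi_N$, a map $N\to M$, which lives in $\Hom_R(N,M)$ rather than $\Hom_R(M,N)$; either take $f=\iota_N\circ\pi_M\colon M\to N$ with $g=\iota_M$, $h=\pi_N$ (so $hfg=\mathrm{id}_A$), or just observe that the hypothesis is symmetric in $M$ and $N$.
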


\begin{proof} The first two assertions are straightforward computations, see e.g., \cite{Quarles} Prop. 2.1.4, (b) and (f).
The last assertion is easy to verify, using that $f \in {\bf J}^2(\bigoplus_i X_i, \bigoplus_j Y_j)$ if and only if $f$ is a composition of radical morphisms, and induction.
\end{proof}

\subsection{Global dimension and projective modules} \label{sub:structureEnd}

Let $\Lambda$ be a %left and right 
noetherian ring with $1$. The (left or right) \emph{global dimension} of a ring $\Lambda$ with $1$ is defined to be 
$$\gl (\Lambda)=\sup\{ \pd(M): M \in \mathrm{Mod}(\Lambda) \}.$$
%Auslander has shown \cite{AuslanderDimensionIII} that  it is enough to consider the supremum of projective dimensions over all finitely generated modules, i.e., $M \in \mmod{\Lambda}$. We will mostly use the following characterization of global dimension:
As explained in more detail e.g.~in Section $5$ of \cite{DFI}, if $\Lambda$ is a finitely generated algebra over a noetherian ring, then $$\gl \Lambda= \max \{ \pd S: S \text{ a simple }\Lambda \text{-module} \}.$$

If  $\mmod{\Lambda}$ is Krull-Schmidt, then $\Lambda$ is semi-perfect, i.e., every finitely generated $\Lambda$-module has a projective cover.
Supposing $\Lambda=\End_R M$, over $R$ as above, one knows from the structure theorem of projective modules for $\Lambda$ that each indecomposable projective $P_i$ is of the form $\Hom_R(M,M_i)$, where $M=\bigoplus_i M_i$. For each projective there is a simple $S_i=P_i/{\bf J}_\Lambda \cap P_i$,   and these are all the simples.  In particular, one has  an equivalence of categories: 
$$\add(M) \simeq \mc{P}(\Lambda),$$
where $\mc{P}(\Lambda)$ denotes the fully faithful subcategory of projective modules of $\mmod{\Lambda}$, via the projectivization functor $\Hom_R(M,-)$ (see e.g. \cite{Auslander-Reiten-Smalo}).

\begin{Bem}
In the situation $\Lambda = \End_R M$ with $R$ a local Henselian ring and $M \in \mmod{R}$, with the additional assumption that the residue field is separably closed, one can use the method for computation of the simples as outlined in \cite[Section 5.3.1]{DFI}, following the exposition of \cite{Hazewinkeletc}. However, in order to use this method, one not only has to know the indecomposable summands $M_i$ of $M$ but should moreover have an explicit knowledge of $\Hom_R(M_i,M_j)$ and ${\bf J}(\End_RM_i)$ for any $i,j$. 
 \end{Bem}

%%%%%%%%%%%%%%%%% 
 \subsection{Right approximations and right almost split maps}
%%%%%%%%%%%%%%%% 

Right approximations and right almost split maps by full subcategories of $\CM(R)$ will provide the framework for constructing projective resolutions over endomorphism rings. In other words, 
we reduce the problem of constructing projective resolutions of modules over $\End_R(M)$ to that of constructing approximations by the subcategory $\add(M)$ over $R$.  The approximations will be constructed in Section \ref{Sec:tauladders}, under some conditions on $R$, using almost split maps over $R$ and the ladder technique.

\begin{defi} 
Let $\mc{C}$ be a full subcategory of $\CM(R)$. Let  $0 \ra Z \ra Y \stackrel{f}{\ra} X$ be an exact sequence in $\CM(R)$. We say that $f$ is a \emph{right $\mc{C}$-approximation} if $Y \in \mc{C}$ and all $g: Y' \rightarrow X  \in \mc{C}$ factor through $f$, that is,
$$ \mc{C}(-,Y) \xrightarrow{\Hom_R(f)} \mc{C}(-,X) \lra 0.$$
In other words: for any $g: Y' \lra X$ with $Y' \in \mc{C}$ there exists an $h: Y' \lra Y$ such that $fh=g$.
We say that $f$ is a \emph{right almost split map in $\mc{C}$} (or \emph{right \mc{C}-almost split map})  if $Y \in \mc{C}$, $f \in {\bf J}_{\mc{C}}(Y, X)$ and for any $Y' \in \mc{C}$ and $g \in {\bf J}_{\mc{C}}(Y',X)$ there exists a $h: Y' \lra Y$ such that $fh=g$, that is, the sequence
$$ {\mc{C}}(-, Y) \xrightarrow{\Hom_R(f)} {\bf J}_{\mc{C}}(-,X) \lra 0$$
is exact. \\
%By definition the direct sum of right $\mc{C}$-split morphisms (\mc{C}-approximations) of some $X, X'$ is again a right \mc{C}-almost split morphism ($\mc{C}$-approximation) of the direct sum $X \oplus X'$. \\
A right $\mc{C}$-approximation (or right $\mc{C}$-almost split map) $f: Y \lra X$ is \emph{(right) minimal} if every morphism $\alpha: Y \lra Y$ such that $f \alpha= f$ is an isomorphism. It can be easily seen 
that this is equivalent to saying that $f$ is right minimal if there does not exist a non-zero direct summand of $Y$ that is mapped to zero under $f$. 
 \\
An exact sequence $\xi: \cdots \xrightarrow{f_2} Y_1 \xrightarrow{f_1} Y_0 \xrightarrow{f_0} X$ is called a \emph{$\mc{C}$-resolution of $X$} if each $Y_i \in \mc{C}$ and $\mc{C}(-,\xi): \cdots \xrightarrow{\Hom(f_2)} \mc{C}(-,Y_1) \xrightarrow{\Hom_R(f_1)} \mc{C}(-,Y_0) \xrightarrow{\Hom(f_0)} \mc{C}(-,X) \xrightarrow{} 0$ is exact on $\mc{C}$. 
A $\mc{C}$-resolution is \emph{minimal} if each $f_i$ is right minimal. 
\end{defi}

By definition the direct sum of right $\mc{C}$-split morphisms (\mc{C}-approximations) of some $X, X'$ is again a right \mc{C}-almost split morphism ($\mc{C}$-approximation) of the direct sum $X \oplus X'$. \\
Dually, one can also define \emph{left $\mc{C}$-approximations}, \emph{left $\mc{C}$-almost split maps} and \emph{left $\mc{C}$-resolutions}. \\

\begin{ex} (1) If $Z$ is in $\mc{C}$, then $0 \ra Z \xrightarrow{Id_Z} Z \ra 0$ is the right minimal $\mc{C}$-approximation. \\
(2) Let $R$ be a local Henselian CM ring with canonical module that is an isolated singularity. Then any indecomposable $M \in \CM(R)$, which does not contain $R$ as a factor, has a unique minimal right $\CM(R)$-almost split map, coming from its AR-sequence, see e.g. \cite[Chapter 13]{LeuschkeWiegand}. 
\end{ex}

A full subcategory $\mc{C} \subseteq \CM(R)$ is called \emph{contravariantly (covariantly) finite} in $\CM(R)$ if there exist right (left) $\mc{C}$-approximations for all objects in $\CM(R)$. Moreover, $\mc{C}$ is called \emph{functorially finite} if $\mc{C}$ is both co- and contravariantly finite. By Auslander--Buchweitz \cite{AuslanderBuchweitz} for any functorially finite subcategory $\mc{C}$ of $\CM(R)$, any $X \in \mmod{R}$ has a minimal right (left) $\mc{C}$-resolution. \\
In general it is not clear  whether a subcategory $\mc{C} \subseteq \CM(R)$ is contravariantly finite, for more properties and examples see e.g., \cite{AuslanderSmalo} for Artin algebras, \cite{IyamaHigher} for maximally orthogonal subcategories (=cluster tilting subcategories) and \cite{TakahashiResolving} for resolving subcategories. \\
In our case we have the following: 

\begin{Lem} \label{Lem:addMcontra}
Let $\mc{C}=\add(L)$ for some $\L \in \CM(R)$. Then $\mc{C}$ is contravariantly finite.
\end{Lem}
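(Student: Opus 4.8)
The plan is to show that for any $X \in \CM(R)$, there exists a right $\add(L)$-approximation $f\colon Y \to X$ with $Y \in \add(L)$. The key observation is that $R$ is Henselian local (or artinian), so $\End_R(L)$ is semi-perfect and $\add(L)$ is a Krull--Schmidt category with finitely many indecomposables up to isomorphism among the summands of $L$; more importantly, $\Hom_R(L, X)$ is a finitely generated module over the noetherian ring $\Lambda = \End_R(L)$.

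First I would consider the $\Lambda$-module $\Hom_R(L, X)$ and choose a finite generating set, equivalently a surjection $\Lambda^{\oplus m} \twoheadrightarrow \Hom_R(L, X)$ of $\Lambda$-modules. Under the projectivization equivalence $\add(L) \simeq \mc{P}(\Lambda)$ via $\Hom_R(L, -)$ recalled in Section \ref{sub:structureEnd}, the free module $\Lambda^{\oplus m}$ corresponds to $L^{\oplus m} \in \add(L)$, and the chosen surjection is induced by a morphism $f\colon L^{\oplus m} \to X$ in $\CM(R)$ (the Yoneda-type correspondence: $\Hom_\Lambda(\Lambda^{\oplus m}, \Hom_R(L,X)) \cong \Hom_R(L^{\oplus m}, X)$). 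Then for any $Y' \in \add(L)$ and any $g\colon Y' \to X$, applying $\Hom_R(L, -)$ gives a $\Lambda$-module map $\Hom_R(L, Y') \to \Hom_R(L, X)$; since $\Hom_R(L, Y')$ is projective and $\Hom_R(L, f)$ is surjective, this factors through $\Hom_R(L, f)$, and translating back via the equivalence yields the desired factorization $g = f h$ for some $h\colon Y' \to Y$. Finally one replaces $f$ by its restriction to a direct summand to obtain the right minimal approximation if desired, or simply notes that a right $\add(L)$-approximation exists, which is exactly contravariant finiteness.

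The one point requiring a little care — and the main obstacle — is verifying the surjectivity statement "$\Hom_R(L, Y') \to \Hom_R(L, X)$ factors through $\Hom_R(L, f)$ for \emph{every} $Y' \in \add(L)$, not just $Y' = L$." This is immediate once one knows $\Hom_R(L, f)\colon \Hom_R(L, L^{\oplus m}) \to \Hom_R(L, X)$ is an epimorphism of $\Lambda$-modules, because $\Hom_R(L, Y')$ is a projective $\Lambda$-module for $Y' \in \add(L)$ and projectives lift along epimorphisms. So the whole argument reduces to: (i) $\Hom_R(L, X)$ is finitely generated over $\Lambda$ — which holds since $R$ is noetherian and $L, X$ are finitely generated; (ii) the projectivization functor is fully faithful on $\add(L)$, so a $\Lambda$-linear surjection onto $\Hom_R(L,X)$ from a free module is realized by an honest $R$-module map out of a module in $\add(L)$. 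Both are standard in this Henselian Krull--Schmidt setting, so the proof is short.
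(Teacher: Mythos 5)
Your proof is correct and is essentially the argument the paper is pointing to: the paper defers to Auslander--Smal\o\ \cite[Prop.~4.2]{AuslanderSmalo} and lists the same key fact you use, namely that $\Hom_R(L,X)$ is finitely generated over $\End_R L$, which is then converted into a right $\add(L)$-approximation via a surjection from a free module and projectivization. You have just written out the standard argument behind that reference.
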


\begin{proof}
The proof is the same as in the artinian case, see \cite[Prop.4.2]{AuslanderSmalo}. The facts needed are that $\Hom_R(L,N)$ of two noetherian finitely generated modules is finitely generated as $\End_R L$-module and that $\mc{C}$ has only finitely many indecomposables.
\end{proof}

The following lemma gives another useful characterization of minimal right $\mc{C}$-almost split maps and -approximations:

\begin{Lem} \label{Lem:minimalres}
Let $\mc{C}$ be a full subcategory of $\CM(R)$ and let $f: Y \lra X$ with $Y \in \mc{C}$ be either a right $\mc{C}$-approximation or $\mc{C}$ almost split map. \\
(1) Then $f$ is right-minimal if and only if in 
in exact sequence
$$0 \lra \ker(f) \xrightarrow{g} Y \xrightarrow{f} X$$
the morphism $g \in {\bf J}(\ker(f), Y)$. Moreover if a $\mc{C}$-resolution is minimal, then each $f_i \in {\bf J}_{\mc{C}}$ for $i \geq 1$. \\ 
(2) If $X$ has a right $\mc{C}$-approximation (-almost split map) $f: Y \lra X$, then there exists a minimal right $\mc{C}$-approximation (-almost split map). Moreover, if $\mc{C}$ is contravariantly finite, any $X$ has a minimal $\mc{C}$-resolution.
\end{Lem}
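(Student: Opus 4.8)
The plan is to deduce everything formally from the Krull--Schmidt property of $\mc{C}$ — which holds since $\mmod R$, and hence $\mc{C}=\add(L)$, is Krull--Schmidt over the rings considered here — together with the elementary observation that one may always split off a direct summand of $Y$ on which $f$ vanishes. For the equivalence in (1) I would argue by contraposition in both directions. If $f$ is not right minimal, then by the characterization of right minimality recalled before the lemma there is a decomposition $Y=Y'\oplus Y''$ with $Y''\neq 0$ and $f|_{Y''}=0$; then $\ker(f)=(\ker(f)\cap Y')\oplus Y''$, so the kernel inclusion $g$ restricts on the summand $Y''$ to the canonical split inclusion $Y''\hookrightarrow Y$, and composing an indecomposable summand $A$ of $Y''$ into $\ker(f)$ along $g$ and back via the projection $Y\to A$ yields $\mathrm{id}_A$; hence $g\notin{\bf J}(\ker f,Y)$. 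Conversely, if $g\notin{\bf J}(\ker f,Y)$ there are an indecomposable $A$ and maps $k\colon A\to\ker f$, $h\colon Y\to A$ with $hgk$ invertible; after rescaling, $gk$ is a split monomorphism, so $A$ is isomorphic to a direct summand of $Y$, and since $fg=0$ this summand is annihilated by $f$, contradicting right minimality. Note this part uses nothing about $f$ beyond $Y\in\mc{C}$.

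For the supplementary claim in (1), I would use that in a $\mc{C}$-resolution consecutive maps compose to zero. If $f_i\notin{\bf J}_\mc{C}(Y_i,Y_{i-1})$ for some $i\geq 1$, then $qf_ip$ is invertible for some indecomposable $A\in\mc{C}$ and maps $p\colon A\to Y_i$, $q\colon Y_{i-1}\to A$; hence $f_ip$ is a split monomorphism exhibiting $A$ as a direct summand of $Y_{i-1}$, and $f_{i-1}(f_ip)=0$, so $A$ is a nonzero summand of $Y_{i-1}$ killed by $f_{i-1}$ — impossible since $f_{i-1}$ is right minimal. The minimalization statement in (2) is the familiar ``remove the superfluous summands'' argument: among all decompositions $Y=Y_1\oplus Y_2$ with $f|_{Y_2}=0$ and $f|_{Y_1}$ still a right $\mc{C}$-approximation (resp.\ right $\mc{C}$-almost split map), of which the trivial one exists, choose one with $Y_1$ having the fewest indecomposable summands (possible by Krull--Schmidt); that $f|_{Y_1}$ remains an approximation is immediate from writing a factoring map in components along $Y=Y_1\oplus Y_2$, the $Y_2$-component being annihilated by $f$, and in the almost split case one additionally uses that the radical is an ideal; finally $f|_{Y_1}$ is right minimal, for otherwise a nonzero summand of $Y_1$ on which it vanishes could be split off, contradicting minimality of the number of summands.

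To produce a minimal $\mc{C}$-resolution of $X\in\mmod R$ when $\mc{C}$ is contravariantly finite, I would build it inductively: let $f_0\colon Y_0\to X$ be a minimal right $\mc{C}$-approximation (it exists by the previous paragraph), put $Z_0=\ker f_0$, let $f_1'\colon Y_1\to Z_0$ be a minimal right $\mc{C}$-approximation, set $f_1$ to be $f_1'$ followed by the inclusion $Z_0\hookrightarrow Y_0$, and iterate. That the resulting complex $\cdots\to Y_1\xrightarrow{f_1}Y_0\xrightarrow{f_0}X$ is a $\mc{C}$-resolution is a short diagram chase: applying $\mc{C}(Z,-)$ for $Z\in\mc{C}$, exactness at $\mc{C}(Z,Y_{k-1})$ says that a map $Z\to Y_{k-1}$ killed by $f_{k-1}$ factors through $\ker f_{k-1}=Z_{k-1}$ and therefore, being a map from $Z\in\mc{C}$ into $Z_{k-1}$, through the approximation $f_k'$; and exactness at $\mc{C}(Z,X)$ is the approximation property of $f_0$. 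Each $f_k$ is right minimal because $f_k'$ is and the inclusion $Z_{k-1}\hookrightarrow Y_{k-1}$ is injective, so the resolution is minimal.

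The one point requiring care — which I expect to be the main obstacle — is the inductive step above: one needs the syzygy $Z_k=\ker f_k$ to again admit a right $\mc{C}$-approximation. For $\mc{C}=\add(L)$ this is immediate from the proof of Lemma~\ref{Lem:addMcontra}, since a finite generating set of $\Hom_R(L,Z_k)$ over $\End_R(L)$ yields such an approximation for the finitely generated module $Z_k$; for a general contravariantly finite $\mc{C}\subseteq\CM(R)$ one must either remain in a setting where these syzygies stay in $\CM(R)$ — for instance when $\dim R\leq 1$, where submodules of $\CM$-modules are again $\CM$ — or appeal to the Auslander--Buchweitz theorem quoted before the lemma. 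Everything else is bookkeeping with the Krull--Schmidt decomposition.
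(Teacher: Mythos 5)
Your proof is correct and follows essentially the same route as the paper's: both rest on the Krull--Schmidt decomposition of $Y$ and $\ker f$, on realizing a non-radical component of $g$ as a split monomorphism, and on splitting off a summand of $Y$ annihilated by $f$. You are in fact slightly more complete than the published argument — you supply both implications of the equivalence in (1), where the paper writes out only ``$f$ minimal $\Rightarrow g\in\mathbf{J}$,'' and you correctly flag that the inductive construction of the minimal $\mc{C}$-resolution needs the syzygies $\ker f_k$ to keep admitting right $\mc{C}$-approximations, a point the paper dispatches with ``can be seen similarly'' and which indeed holds once $\dim R\le 2$ (so kernels stay in $\CM(R)$ by the depth lemma) or by appealing to Auslander--Buchweitz.
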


\begin{proof}
(1) Let $Z:=\ker(f)$. First suppose that $f$ is minimal. Suppose that there  exists a $g$ not in ${\bf J}(Z,Y)$. Since $\mmod{R}$ is Krull--Schmidt, we may decompose $Z=\bigoplus_{i=1}^m Z_i$ and $Y=\bigoplus_{j=1}^n Y_j$ into indecomposables. Then $g$ can be decomposed into $g_{ij}: Z_i \lra Y_j$, and  $\Hom_R(Z,Y)$ (resp.~the radical ${\bf J}(Z,Y)$) decomposes into direct summands $\Hom_R(Z_i,Y_j)$ (resp.~${\bf J}(Z_i, Y_j)$ by Lemma \ref{Lem:radicaldecomposition}). So if $g$ is not in the radical, then at least one $g_{ij} \not \in {\bf J}(Z_i,Y_j)$. But $g_{ij} \not \in {\bf J}(Z_i,Y_j)$ means that $g_{ij}$ is an isomorphism, since $Y_j, Z_i$ are indecomposable. Thus w.l.o.g. $i=j=1$, so that $Z_1 \cong Y_1$. However, this implies that $Y_1$ lies in the kernel of $f$ and hence $f|_{Y_1}=0$, which contradicts the minimality of $f$.  \\
Now suppose that $\cdots \xrightarrow{f_2} M_1 \xrightarrow{f_1} M_0 \xrightarrow{f_0} X$ is a minimal $\mc{C}$-resolution of $X$. Suppose that some $f_i: M_i \ra M_{i-1}$ is not in ${\bf J}_\mc{C}$. This means that $f_i$ is an isomorphism on at least one indecomposable summand $M_{ij}$ of $M_i$. Thus $M_{ij} \subseteq \img(f_i)=\ker(f_{i-1})$, which implies $f|_{M_{ij}}=0$. By definition $f_{i-1}$ is not minimal. \\
(2) The first assertion simply follows from splitting off the direct summands of $Y$ lying in the kernel of $f$. The second assertion can be seen similarly.
\end{proof}

It follows that  a right minimal $\mc{C}$-almost split map (-approximation) of a direct sum $X_1 \oplus X_2$ is given by the direct sum of the right minimal $\mc{C}$-almost split maps (-approximations) of the factors. 

The next lemma follows immediately from the definition of $\mc{C}$-approximation:
\begin{Lem} \label{Lem:exactapprox}
Let $\mc{C}\subseteq \CM(R)$ be a full subcategory, $M \in \mc{C}$, and let  $Y \stackrel{f}{\ra} Z$ be a right $\mc{C}$-approximation with kernel $X$.  Then 
$$0 \ra \mc{C}(M, X) \ra \mc{C}(M, Y) \ra \mc{C}(M,Z) \ra  0$$
is exact.
\end{Lem}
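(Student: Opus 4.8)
The statement to prove is Lemma~\ref{Lem:exactapprox}: for a full subcategory $\mc{C} \subseteq \CM(R)$, a module $M \in \mc{C}$, and a right $\mc{C}$-approximation $Y \xrightarrow{f} Z$ with kernel $X$ (so $0 \to X \to Y \xrightarrow{f} Z$ is exact in $\CM(R)$), the induced sequence
$$0 \to \mc{C}(M,X) \to \mc{C}(M,Y) \to \mc{C}(M,Z) \to 0$$
is exact. The plan is to apply the left-exact functor $\Hom_R(M,-) = \mc{C}(M,-)$ to the short exact sequence $0 \to X \to Y \xrightarrow{f} Z \to 0$ (here I note that $f$ need not be surjective as a map of $R$-modules, but this is irrelevant: we only ever apply $\Hom_R(M,-)$ to the three-term exact sequence $0 \to X \to Y \to \img(f)$, or equivalently restrict attention to the image) and observe that the only thing needing proof beyond general homological algebra is surjectivity of the rightmost map $\mc{C}(M,Y) \to \mc{C}(M,Z)$.

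First I would recall that $\Hom_R(M,-)$ is left exact, which immediately gives exactness of $0 \to \mc{C}(M,X) \to \mc{C}(M,Y) \to \mc{C}(M,Z)$; here one uses that $\mc{C}$ is a full subcategory so that $\mc{C}(M,-) = \Hom_R(M,-)$ on these modules, as noted in Section~\ref{Sec:general}. Then the key step: surjectivity of $\mc{C}(M,Y) \xrightarrow{\Hom_R(f)} \mc{C}(M,Z)$ is precisely the defining property of a right $\mc{C}$-approximation. Indeed, given any $g \in \mc{C}(M,Z) = \Hom_R(M,Z)$, since $M \in \mc{C}$, the approximation property of $f$ provides $h \colon M \to Y$ with $fh = g$, i.e., $\Hom_R(f)(h) = g$. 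This is exactly the exactness of $\mc{C}(-,Y) \xrightarrow{\Hom_R(f)} \mc{C}(-,Z) \to 0$ in the definition, evaluated at the object $M$.

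There is essentially no obstacle here — the lemma is, as the text says, immediate from the definition. The one point worth being slightly careful about is that $f$ is not assumed to be an epimorphism of $R$-modules, so one should phrase the left-exactness argument using the short exact sequence $0 \to X \to Y \to \img(f) \to 0$ and note that $\Hom_R(M, \img(f)) \hookrightarrow \Hom_R(M,Z)$ is an inclusion, so the image of $\mc{C}(M,Y) \to \mc{C}(M,Z)$ lands in (and, by the approximation property, equals) $\mc{C}(M,Z)$ — but in fact since every $g \colon M \to Z$ factors through $f$, its image lies in $\img(f)$, so $\mc{C}(M,Z) = \mc{C}(M,\img(f))$ as subsets and the subtlety evaporates. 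I would therefore present the proof in two short sentences: left exactness of $\Hom_R(M,-)$ handles the left end, and the approximation property handles surjectivity on the right.
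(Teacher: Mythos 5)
Your proof is correct and is exactly the argument the paper has in mind: it states that the lemma ``follows immediately from the definition of $\mc{C}$-approximation'' and gives no written proof, and your two ingredients --- left exactness of $\Hom_R(M,-)$ applied to $0 \to X \to Y \xrightarrow{f} Z$ (with $X = \ker f$), plus the approximation property evaluated at $M \in \mc{C}$ for surjectivity --- are the expected ones. The only remark is that your detour through $\img(f)$ is unnecessary: left exactness of $\Hom_R(M,-)$ means it preserves kernels, so exactness of $0 \to \Hom_R(M,X) \to \Hom_R(M,Y) \to \Hom_R(M,Z)$ follows directly from $X = \ker f$ without needing $f$ to be an epimorphism.
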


\begin{Lem} \label{Lem:radicalminimal}
Let  $\mc{C}=\add(M)$ for some $M \in\CM(R)$ and $X \in \mmod{R}$ and suppose that 
$$ \xi: \cdots  \xrightarrow{} M_1 \xrightarrow{f_1} M_0 \xrightarrow{f_0} X$$
 is a $\mc{C}$-minimal resolution, i.e., $M_i \in \mc{C}, f_i$ right minimal and
$$ \mc{C}(M, \xi): \cdots \xrightarrow{} \mc{C}_R(M,M_1) \xrightarrow{\Hom_R(M,f_1)} \mc(M,M_0) \xrightarrow{\Hom_R(M,f_0)} \mc{C}(M,X) \xrightarrow{} 0$$
is exact. Then $\mc{C}(M, \xi)$ is a minimal projective $\End_R(M)$-resolution of $\mc{C}(M,X)=\Hom_R(M,X)$. Thus, in particular, the length of $\mc{C}(M,\xi)$ is equal to $\pd_{\End_RM}\Hom_R(M, X)$.
\end{Lem}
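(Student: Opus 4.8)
The plan is to show that $\mc{C}(M,\xi)$ is a projective resolution by combining Lemma~\ref{Lem:exactapprox} with the equivalence $\add(M)\simeq\mc{P}(\End_RM)$ from Section~\ref{sub:structureEnd}, and then to upgrade ``projective resolution'' to ``minimal projective resolution'' using the radical characterization of minimality from Lemma~\ref{Lem:minimalres}(1) together with the radical-decomposition Lemma~\ref{Lem:radicaldecomposition}. Write $A=\End_RM$ and $F=\Hom_R(M,-)=\mc{C}(M,-)$ for the projectivization functor; by the cited equivalence, each $F(M_i)=\mc{C}(M,M_i)$ is a projective $A$-module, and every projective $A$-module arises this way, so $\mc{C}(M,\xi)$ is a complex of projectives augmented to $\Hom_R(M,X)$.

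First I would establish exactness. Break $\xi$ into short exact sequences $0\to K_i\to M_i\xrightarrow{f_i} K_{i-1}\to 0$ where $K_{-1}=X$ and $K_i=\ker(f_{i-1})=\img(f_i)$ for $i\ge 0$ (here using that $\xi$ is exact in the usual sense as a complex over $R$, and that each $f_i$ is a right $\mc{C}$-approximation of $K_{i-1}$, which is exactly what it means for $\mc{C}(M,\xi)$ to be exact on $\mc{C}$). Since each $f_i\colon M_i\to K_{i-1}$ is a right $\mc{C}$-approximation with kernel $K_i$, Lemma~\ref{Lem:exactapprox} gives short exact sequences
\[
0\to \mc{C}(M,K_i)\to \mc{C}(M,M_i)\xrightarrow{\Hom_R(M,f_i)} \mc{C}(M,K_{i-1})\to 0
\]
of $A$-modules. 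Splicing these together yields that $\mc{C}(M,\xi)$ is exact, i.e. a projective $A$-resolution of $\mc{C}(M,X)=\Hom_R(M,X)$.

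Next comes minimality, which I expect to be the main point. I want to show $\Hom_R(M,f_i)\in{\bf J}_A$ for $i\ge 1$, equivalently that the image of each $\Hom_R(M,f_i)$ lies in ${\bf J}_A\cdot\mc{C}(M,M_{i-1})$; by the projective-cover criterion this makes $\mc{C}(M,\xi)$ minimal. By Lemma~\ref{Lem:minimalres}(1), minimality of the $\mc{C}$-resolution $\xi$ gives $f_i\in{\bf J}_\mc{C}(M_i,M_{i-1})$ for $i\ge 1$. The functor $F$ restricted to $\mc{C}=\add(M)$ is an equivalence onto $\mc{P}(A)$, and an equivalence of additive categories carries the Jacobson radical of one category to that of the other; under the identification $\mc{P}(A)$ with the category of finitely generated projective $A$-modules, the categorical radical ${\bf J}_{\mc{P}(A)}(P,Q)$ is precisely $\{g\colon P\to Q : \img g\subseteq {\bf J}_A Q\}$ (using Lemma~\ref{Lem:radicaldecomposition} to reduce to indecomposable summands, where a radical morphism is just a non-isomorphism, i.e. a map whose image lies in the radical of the target). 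Hence $f_i\in{\bf J}_\mc{C}$ forces $\Hom_R(M,f_i)=F(f_i)\in{\bf J}_{\mc{P}(A)}$, so $\img\Hom_R(M,f_i)\subseteq {\bf J}_A\cdot\mc{C}(M,M_{i-1})$, which is exactly the condition that $\mc{C}(M,M_i)\twoheadrightarrow\img\Hom_R(M,f_i)$ — equivalently $\mc{C}(M,M_i)\twoheadrightarrow\ker\Hom_R(M,f_{i-1})$ — be a projective cover. Therefore $\mc{C}(M,\xi)$ is a minimal projective resolution of $\Hom_R(M,X)$, and in particular its length equals $\pd_{\End_RM}\Hom_R(M,X)$.

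The one delicate step is the bookkeeping that an equivalence of Krull--Schmidt additive categories identifies the categorical Jacobson radicals, and that under $\add(M)\simeq\mc{P}(A)$ the categorical radical of $\mc{P}(A)$ matches the ``image in ${\bf J}_A\cdot(-)$'' description of radical morphisms between projectives; this is where Lemma~\ref{Lem:radicaldecomposition} is used, to pass to indecomposable summands where the statement is transparent (a map between indecomposable projectives is in the radical iff it is not a split surjection iff its image lies in the radical). Everything else is a formal splicing of short exact sequences. \qed
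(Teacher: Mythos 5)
Your proof is correct, but for the minimality step you take a genuinely different route from the paper's. The paper argues by contradiction directly from the ``no killed direct summand'' characterization of right minimality (the equivalence stated just after the definition, justified via \cite[Prop.~I.4.1]{Auslander-Reiten-Smalo}): if $\Hom_R(M,f_i)$ were not right minimal, some summand $\mc{C}(M,M_{i1})$ of $\mc{C}(M,M_i)$ would be annihilated; evaluating at the projection $\pi_{i1}\in\mc{C}(M,M_i)$ then gives $f_i\pi_{i1}=0$, so $M_{i1}\subseteq\ker f_i$, contradicting right minimality of $f_i$. You instead invoke Lemma~\ref{Lem:minimalres}(1) to place $f_i\in{\bf J}_\mc{C}(M_i,M_{i-1})$ for $i\ge1$, transport this through the projectivization equivalence $\add(M)\simeq\mc{P}(A)$ using the (true, and standard for Krull--Schmidt additive categories) fact that an equivalence carries the categorical radical to the categorical radical, and then identify ${\bf J}_{\mc{P}(A)}$ with the ``image contained in ${\bf J}_A\cdot(-)$'' condition, i.e.\ the usual projective-cover criterion. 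Both arguments are sound; the paper's is more elementary and self-contained, tracing out explicitly how a killed projective summand pulls back to a killed $R$-summand, whereas yours is more conceptual, at the cost of relying on the preservation of the Jacobson radical under equivalence (which you correctly flag as the delicate step and which, while standard, is not explicitly cited in the paper). Your exactness argument via Lemma~\ref{Lem:exactapprox} and splicing is also spelled out where the paper merely says ``clearly''; they agree in substance.
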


\begin{proof}
Clearly $\mc{C}(M,\xi)$ is a projective resolution of $\Hom_R(M,X)$ over $\End_R M$. We have to show its minimality, i.e., that each $\mc{C}(M,M_i) \xrightarrow{\Hom_R(M,f_i)} \mc{C}(M, \ker f_{i-1})$ is a projective cover. 
This is equivalent to showing that each $\Hom_R(M,f_i)$ is a right minimal epimorphism onto its image, see \cite[Prop.~I.4.1]{Auslander-Reiten-Smalo} %\eleonorecomment{[there stated for Artin rings but holds in general]}. 
Suppose therefore that $\mc{C}(M,f_i)$ is not right minimal, that is, there exists a direct summand $\mc{C}(M, M_{i\alpha})$ of $\mc{C}(M,M_i)$ such that $\Hom_R(M,f_i)|_{\mc{C}(M, M_{i\alpha})} \equiv 0$. Without loss of generality assume that $\alpha=1$. Take now the projection $\pi_{i1}: M_i \lra M_{i1}$. Then $\Hom_R(M,f)(\pi_{i1})=0$ in $\mc{C}(M,M_j)$. But this means that for any $(m_{i1}, \ldots, m_{ik}) \in M_i=\bigoplus_{\alpha=1}^k M_{i\alpha}$ one has $\Hom_R(M,f_i)(\pi_{i1})(m_i)=f_i \pi_{i1}(m_i)=0$, so that it follows that $M_{i1}$ lies in the kernel of $f_i$. This contradicts minimality of $f_i$.
\end{proof}

\begin{Lem} \label{Lem:resbegin}
Let $\mc{C}=\add(M)$ for a $M \in \CM(R)$ and let $\xi: 0 \ra X \ra Y \stackrel{f}{\ra} Z$ be a (minimal) right  $\mc{C}$-almost split map (in particular: $Y \in \mc{C}$).  If $Z \in \mc{C}$ is indecomposable, then $\mc{C}(M,\xi)$ yields the beginning of a (minimal) projective resolution of the corresponding simple $S_Z$ in $\mmod{\End_R(M)}$:
$$ 0 \ra \mc{C}(M, X) \ra \mc{C}(M, Y) \xrightarrow{\Hom_{R}(M,f)} \mc{C}(M,Z) \ra S_Z \ra 0.$$
Here $\mc{C}(M,Y)$ and $\mc{C}(M,Z)$ are projective over $\End_RM$ and $S_Z \cong \Hom_R(M,Z) / {\bf J}_R(M,Z)$.
\end{Lem}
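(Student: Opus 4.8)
The plan is to build the claimed exact sequence by applying the functor $\mc{C}(M,-)$ to $\xi$ and then identifying the cokernel with the simple module $S_Z$. First I would observe that since $f \colon Y \ra Z$ is a right $\mc{C}$-almost split map, applying $\mc{C}(M,-) = \Hom_R(M,-)$ to $\xi$ gives, by the very definition of right almost split map (with $Y' = M$, noting $M \in \mc{C}$ so that every non-iso morphism $M \ra Z$ lies in ${\bf J}_\mc{C}(M,Z)$), that the image of $\Hom_R(M,f)$ is exactly ${\bf J}_\mc{C}(M,Z) = {\bf J}_R(M,Z)$. Together with left-exactness of $\Hom_R(M,-)$ this yields the exact sequence
\[
0 \ra \mc{C}(M,X) \ra \mc{C}(M,Y) \xrightarrow{\Hom_R(M,f)} {\bf J}_R(M,Z) \ra 0,
\]
and hence, splicing in the quotient $\mc{C}(M,Z) \ra \mc{C}(M,Z)/{\bf J}_R(M,Z)$, the four-term sequence in the statement with cokernel $\mc{C}(M,Z)/{\bf J}_R(M,Z)$.

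Next I would identify this cokernel with the simple $S_Z$. Under the projectivization equivalence $\add(M) \simeq \mc{P}(\End_R M)$ recalled in Section \ref{sub:structureEnd}, the indecomposable $Z$ corresponds to the indecomposable projective $P_Z = \Hom_R(M,Z)$, and by the structure theory of projectives over the semi-perfect ring $\End_R M$ the simple top of $P_Z$ is $P_Z / ({\bf J}_{\End_R M} \cap P_Z)$. So the remaining point is that ${\bf J}_{\End_R M} \cap P_Z$, as a submodule of $\Hom_R(M,Z)$, equals ${\bf J}_R(M,Z)$; this is standard (it says the Jacobson radical of the endomorphism ring, restricted to $\Hom$-groups, coincides with the categorical radical ${\bf J}$ on $\add M$), and can be cited from \cite{Auslander-Reiten-Smalo} or checked directly via Lemma \ref{Lem:radicaldecomposition}. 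This gives $S_Z \cong \Hom_R(M,Z)/{\bf J}_R(M,Z)$ and finishes the exactness claim, with $\mc{C}(M,Y)$, $\mc{C}(M,Z)$ projective by Lemma \ref{Lem:exactapprox} and the projectivization functor.

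For the minimality statement, I would argue that when $f$ is a \emph{minimal} right $\mc{C}$-almost split map, the map $\Hom_R(M,f)\colon \mc{C}(M,Y) \ra {\bf J}_R(M,Z)$ is a projective cover, and the quotient $\mc{C}(M,Z) \ra S_Z$ is the projective cover of $S_Z$ by construction; hence the displayed sequence is the start of the minimal projective resolution of $S_Z$. Concretely, one uses Lemma \ref{Lem:minimalres}(1): minimality of $f$ forces $\ker f \hookrightarrow Y$ to lie in ${\bf J}(\ker f, Y)$, and then the same summand-chasing argument as in Lemma \ref{Lem:radicalminimal} (take a projection onto a summand of $Y$ killed by $\Hom_R(M,f)$ and derive that this summand lies in $\ker f$, contradicting right-minimality) shows $\Hom_R(M,f)$ is right minimal onto its image, i.e.\ a projective cover by \cite[Prop.~I.4.1]{Auslander-Reiten-Smalo}.

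The only genuinely delicate point is the identification ${\bf J}_{\End_R M} \cap \Hom_R(M,Z) = {\bf J}_R(M,Z)$ — i.e.\ that the radical of the module-category factor through indecomposables matches the ring-theoretic Jacobson radical of $\End_R M$ acting on the projective $P_Z$ — which relies on $\mmod R$ (hence $\add M$) being Krull--Schmidt and $\End_R M$ being semi-perfect, both of which hold under our standing hypotheses on $R$. Everything else is a formal consequence of the definition of right almost split map, left-exactness of $\Hom$, and the projectivization equivalence, so I would keep the proof short and refer to \cite{Auslander-Reiten-Smalo} and the preceding lemmas for the routine pieces.
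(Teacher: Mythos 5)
Your proof takes essentially the same route as the paper: apply $\Hom_R(M,-)$ to $\xi$, use the defining property of a right $\mc{C}$-almost split map to identify the image of $\Hom_R(M,f)$ with ${\bf J}_R(M,Z)$, and then splice with $0\ra {\bf J}_R(M,Z)\ra \Hom_R(M,Z)\ra S_Z\ra 0$. You additionally spell out why $S_Z$ is the simple top of the projective $P_Z$ and why minimality of $f$ transports to a projective cover, both of which the paper leaves implicit, so the extra detail is welcome.

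One small correction: the parenthetical ``every non-iso morphism $M\ra Z$ lies in ${\bf J}_\mc{C}(M,Z)$'' is false when $M$ is decomposable — if $Z$ is a proper summand of $M$, the split projection $M\twoheadrightarrow Z$ is not an isomorphism yet it is not in the radical. That statement is also not what you actually need. The containment $\img(\Hom_R(M,f)) \subseteq {\bf J}_R(M,Z)$ follows instead from the fact that $f\in{\bf J}_\mc{C}(Y,Z)$ (part of the definition of almost split) together with ${\bf J}_\mc{C}$ being an ideal of the category, so that $fh\in{\bf J}_\mc{C}(M,Z)$ for every $h:M\ra Y$. With this fix the argument is complete, and the reverse containment is exactly the surjectivity $\mc{C}(-,Y)\ra{\bf J}_\mc{C}(-,Z)\ra 0$ evaluated at $M$.
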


\begin{proof}
From the definition of right $\mc{C}$-almost split map it follows that the sequence $0 \ra \Hom_R(M, X) \ra \Hom_R(M, Y)  \ra {\bf J}_R (M,Z) \ra 0$ is exact. Splicing this sequence together with the obvious exact sequence $0 \ra {\bf J}_R(M,Z) \ra \Hom_R(M,Z) \ra S_Z \ra 0$ yields the claim.
\end{proof}

\begin{Lem} \label{Lem:existalmostsplit}
Let $R$ be an isolated singularity with $\dim R \leq 2$. If $\mc{C}$ is contravariantly finite in $\CM(R)$, then each $X \in \CM(R)$ has a right $\mc{C}$-almost split map.
\end{Lem}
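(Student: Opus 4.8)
The plan is to reduce the existence of a right $\mc{C}$-almost split map to the existence of a right $\mc{C}$-approximation, which is guaranteed by the hypothesis that $\mc{C}$ is contravariantly finite, together with the existence of a right almost split map in the ambient category $\CM(R)$. Concretely, fix $X \in \CM(R)$. First I would treat the easy case: if $X$ is a projective summand, i.e. $X \cong R$ (more precisely $X$ has $R$ as a summand), one handles this separately, since $R$ need not have an AR-sequence; there the radical maps into $R$ and a right $\mc{C}$-approximation of the (unique up to the summand-$R$ issue) radical gives what we want. So assume $X$ is indecomposable and not isomorphic to $R$. Because $R$ is a local Henselian CM isolated singularity with $\dim R \le 2$ (and one should note that a canonical module exists — over a Henselian local CM ring of dimension $\le 2$ this is part of the standing hypotheses, or can be arranged), $X$ has an AR-sequence $0 \to \tau X \to E \xrightarrow{p} X \to 0$ in $\CM(R)$; in particular $p: E \to X$ is a right almost split map in $\CM(R)$, so $p \in {\bf J}_{\CM(R)}(E,X)$ and every map in ${\bf J}_{\CM(R)}(-,X)$ from a module in $\CM(R)$ factors through $p$.

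The key step is then to build the $\mc{C}$-almost split map out of $p$ by composing with a right $\mc{C}$-approximation of $E$. Since $\mc{C}$ is contravariantly finite, $E$ has a right $\mc{C}$-approximation $q: Y \to E$ with $Y \in \mc{C}$; by Lemma~\ref{Lem:minimalres}(2) we may take it to be minimal. Set $f := p q : Y \to X$. I claim $f$ is a right $\mc{C}$-almost split map in $\mc{C}$. First, $Y \in \mc{C}$ by construction. Second, $f = pq \in {\bf J}_{\mc{C}}(Y,X)$: indeed $p \in {\bf J}_{\CM(R)}(E,X)$, and restricting the defining condition of the radical to the subcategory $\mc{C}$ shows ${\bf J}_{\CM(R)}(E,X) \cap \mc{C}(E,X) \subseteq {\bf J}_{\mc{C}}(E,X)$ (a map that is not an isomorphism after pre/post-composition with maps through indecomposables of $\CM(R)$ is in particular not one after such composition with indecomposables of $\mc{C}$), hence $pq \in {\bf J}_{\mc{C}}(Y,X)$ since radicals form an ideal. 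Third, the lifting property: take any $Y' \in \mc{C}$ and $g \in {\bf J}_{\mc{C}}(Y',X)$. Since $Y' \in \CM(R)$ and ${\bf J}_{\mc{C}}(Y',X) \subseteq {\bf J}_{\CM(R)}(Y',X)$ (any map that is not an isomorphism after composing with maps through indecomposables of $\mc{C}$, in particular $Y'$ and $X$ themselves if they are indecomposable — here one argues componentwise using Lemma~\ref{Lem:radicaldecomposition}, reducing to indecomposable summands, and uses that a non-isomorphism between indecomposables in $\mc{C}$ is a non-isomorphism in $\CM(R)$), the map $g$ factors through the right almost split map $p$: there is $h': Y' \to E$ with $p h' = g$. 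Now $h': Y' \to E$ with $Y' \in \mc{C}$ factors through the right $\mc{C}$-approximation $q: Y \to E$, say $h' = q h$ with $h: Y' \to Y$. Then $f h = p q h = p h' = g$, as required. Finally, by Lemma~\ref{Lem:minimalres}(2) the map $f$ admits a minimal version, and one checks that a minimal right $\mc{C}$-almost split map is also right minimal in the sense demanded.

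The step I expect to be the main obstacle is the careful bookkeeping around the two radicals ${\bf J}_{\mc{C}}$ and ${\bf J}_{\CM(R)}$: one must verify both inclusions ${\bf J}_{\mc{C}}(-,X) \subseteq {\bf J}_{\CM(R)}(-,X)$ restricted to $\mc{C}$ and the reverse on the nose, which requires knowing that indecomposables of $\mc{C}=\add(L)$ are indecomposable in $\CM(R)$ (true, as $\CM(R)$ is Krull--Schmidt and $\mc{C}$ is closed under summands) and invoking the decomposition of radicals over indecomposable summands from Lemma~\ref{Lem:radicaldecomposition}. A secondary point requiring care is the exceptional role of the free module $R$: it has no AR-sequence, so the module $X$ (or an indecomposable summand of it) isomorphic to $R$ must be handled by a direct argument — one uses that ${\bf J}_{\CM(R)}(-,R)$ consists of the maps with image in $\mm X$, which is again covered by a right $\mc{C}$-approximation of $\mm X$ or equivalently of the first syzygy, since $\dim R \le 2$ keeps us inside $\CM(R)$. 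Once these radical-comparison lemmas are in place, the construction $f = pq$ and the two-step lifting finish the proof quickly.
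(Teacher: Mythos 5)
Your proposal is essentially the same proof as the paper's: take a right almost split map $E \to X$ in $\CM(R)$, precompose with a right $\mc{C}$-approximation $Y \to E$ of the middle term, and check the lifting property. The one small difference is that the paper works throughout with right $\tau$-sequences in the sense of Definition~\ref{Def:taucat}, which subsumes the fundamental sequence of $R$ and so handles the case $X \cong R$ uniformly rather than as a separate case; the paper also explicitly notes (via the depth lemma and $\dim R \le 2$) that the kernel of the composite lies in $\CM(R)$, a point you leave implicit in the general case.
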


\begin{proof}
From the assumptions on $R$ it follows that $\CM(R)$ is a right $\tau$-category (see definition \ref{Def:taucat}). Take the (right) $\tau$-sequence $0 \lra \tau X \lra \vartheta X \xrightarrow{f} X$ of $X$. Since $\mc{C}$ is contravariantly finite, $\vartheta X$ has a $\mc{C}$-approximation $Y \xrightarrow{a}  \vartheta X$ with $Y \in \mc{C}$. Using the definitions of $\mc{C}$-approximation and $\tau$-sequence, it follows that any $\alpha: Y' \lra X$, $Y' \in \mc{C}$ and $\alpha \in {\bf J}_{\mc{C}}(Y',X)$ factors through $Y \xrightarrow{fa} X$. From the depth lemma and the assumptions on $\dim R$ it follows that the kernel of $fa$ is in $\CM(R)$ and thus $fa$ is a right $\mc{C}$-almost split map.
\end{proof}

\begin{Bem}
This lemma shows that for the existence of a right $\mc{C}$-almost split map it is needed that the Krull dimension of $R$ is less than or equal to $2$. For rings of higher Krull dimension Iyama has developed higher AR-theory, see \cite{IyamaHigher}. 
%It would be interesting to see if one can use higher AR theory to construct ``higher'' ladders and so get a recursion formula for the construction of $\mc{C}$-almost split sequences in this case.
\end{Bem}

%%%%%%%%
\subsection{Construction of projective resolutions of the simples of an endomorphism ring} \label{Sub:computationproj}
%%%%%%%

Let $R$ be a  commutative local noetherian Henselian ring and let $\mc{C}$ be a full contravariantly finite subcategory in $\CM(R)$. Suppose that any $X \in \mc{C}$ has a right $\mc{C}$-almost split map.
Here we show how to construct a (minimal) $\mc{C}$-resolution out of $\mc{C}$-approximations and how this will yield minimal resolutions of the simple modules of an endomorphism ring of $M \in \mc{C}=\add(M)$. We will formulate the construction for contravariantly finite subcategory $\mc{C}=\add(M)$ (cf. Lemma \ref{Lem:addMcontra})  and resolutions of the simples of $\End_R M$. But one can more generally compute minimal $\mc{C}$-resolutions for $\mc{C}$ with the assumptions above.

\begin{Bem}
The assumptions are satisfied if $R$ as above is an isolated singularity, that is, $R_\mf{p}$ is regular for any non-maximal prime ideal $\mf{p}$ of $R$, and $\dim R \leq 2$ and $\mc{C}$ a contravariantly finite subcategory of $\CM(R)$: then $\CM(R)$ is a right $\tau$-category, see Def.~\ref{Def:taucat} and by Lemma \ref{Lem:existalmostsplit} right $\mc{C}$-almost split sequences exist. \\
If $R$ is additionally of finite $\CM$-type, then Theorem \ref{Thm:ladder} explicitly computes such minimal $\mc{C}$-approximations and minimal $\mc{C}$-almost split maps.
 \end{Bem}

\begin{Construction} \label{Const:projres}
Let $M \in \CM(R)$ and $A=\End_RM$ and set $\mc{C}=\add(M)$, $\mc{D}=\CM(R)/[\mc{C}]$. We may suppose that $A$ is basic. As discussed in \ref{sub:structureEnd} there are only finitely many simple $A$-modules, namely, for each indecomposable $Z \in \mc{C}$ one has $S_Z=\Hom_R(M,Z) / {\bf J}_R(M,Z)$. By Lemma \ref{Lem:resbegin} one obtains the beginning of the projective resolution of $S_Z$ from a right $\mc{C}$-almost split map of $Z$:
by assumption there exists an exact sequence with a minimal right $\mc{C}$-almost split map $f_0$
\begin{equation} \label{Diag:approxdirectsum0}
\xi_0: 0 \xrightarrow{} K_0 \xrightarrow{g_0} M_0 \xrightarrow{f_0} Z,
\end{equation}
which yields $\mc{C}(M, \xi_0)=\Hom_R(M,\xi_0)$:
$$ 0 \ra \mc{C}(M, K_0) \ra \mc{C}(M, M_0) \ra\mc{C}(M,Z) \ra S_Z \ra 0.$$
Now decompose $K_0=M'_0 \oplus C_0$, where $C_0 \in \mc{D}$ and $M'_0 \in \mc{C}$. This decomposition is unique. This  means that $g_0$ is also decomposed into $M'_0 \oplus C_0 \xrightarrow{(g_{01},g_{02})} M_0$ with $g_{0i} \in {\bf J}_{\CM(R)}$ (by Lemma \ref{Lem:minimalres}). 
 Then 
 take a minimal $\mc{C}$-approximation of $C_0$, i.e., an exact sequence
$$ 0 \xrightarrow{} K_1 \xrightarrow{g_1} M_1 \xrightarrow{f_1} C_0$$
with $f_1$ right minimal. Then by Lemma \ref{Lem:exactapprox} $\Hom_R(M,-)$ makes this sequence also exact on the right, i.e.,
$$ 0 \xrightarrow{} \Hom_R(M,K_1) \xrightarrow{\Hom_R(M,g_1)} \Hom_R(M,M_1) \xrightarrow{\Hom_R(M,f_1)} \Hom_R(M,C_0) \xrightarrow{} 0.$$ 
Then a minimal right $\mc{C}$-approximation of $K_0$ is given as 

\begin{equation}  \label{Diag:approx1}
\begin{small}
\begin{tikzpicture}[description/.style={fill=white,inner sep=2pt}]    
    \matrix (m) [matrix of math nodes, row sep=3em,
    column sep=2.5em, text height=1.5ex, text depth=0.25ex]
    { 0 &  (M,K_1) &(M,M'_0 \oplus M_1) &  &  (M,M_0) &  (M,Z) & S_Z &0 \\
       &   &  & (M, M'_0 \oplus C_0) &  &  &  &\\ };
            \path[->,font=\scriptsize]
             (m-1-1) edge  (m-1-2)
              (m-1-2) edge node[auto] {$(M,\begin{pmatrix}0 \\ g_1 \end{pmatrix})$}  (m-1-3)
              (m-1-3) edge node[auto] {$(M,(g_{01}, g_{02} \circ f_1))$}  (m-1-5)
              (m-1-5) edge node[auto] {$(M,f_0)$}  (m-1-6)
              (m-1-6) edge  (m-1-7)
              (m-1-3) edge node[auto,swap] {$(M,\begin{pmatrix} Id_{M'_0} & 0 \\ 0 & f_1 \end{pmatrix})$}  (m-2-4)
               (m-2-4) edge node[auto,swap] {$(M,(g_{01},g_{02}))$}  (m-1-5)
              (m-1-7) edge  (m-1-8);

      \end{tikzpicture}
                \end{small}  
             \end{equation}

where we just write $( \cdot, \cdot)$ instead of $\mc{C}(\cdot, \cdot)$.
Note here that the map from $M'_0 \oplus M_1 \lra M_0$ is minimal and that by construction of \eqref{Diag:approx1} one gets an exact sequence
\begin{equation} \label{Diag:approxlong}
0 \xrightarrow{} (M,K_1) \xrightarrow{\begin{pmatrix}0 \\ (M,g_1) \end{pmatrix}} (M,M'_0 \oplus M_1) \xrightarrow{(M,g_{01}) \oplus (M,g_{02} \circ f_1) } (M,M_0)  \xrightarrow{(M,f_0)} (M,Z) \ra S_Z \ra 0.
\end{equation}

Here all maps are minimal. Now continue  by decomposing $K_1=M'_1 \oplus C_1$ and constructing a minimal $\mc{C}$-approximation $\xi_2$ of $C_1$.  Then one splices $\mc{C}(M, \xi_2)$ together with \eqref{Diag:approxlong} to get the next part of the minimal projective resolution. This method yields a minimal projective resolution of $S_Z$ (cf.~Lemma \ref{Lem:radicalminimal}). 
\end{Construction}

\begin{ex}
Let $R$ be a commutative complete local Cohen--Macaulay ring and consider $M:= R$. Then the construction above gives a minimal free resolution of the residue field $k$. The constructed minimal resolution is of finite length equal to $\dim R$ if and only if $R$ is regular.
\end{ex}

%%%%%%%%%%%%
\section{Constructing minimal approximations and almost split maps via ladders} \label{Sec:tauladders}
%%%%%%%%

 In this section $R$ will always be a noetherian commutative Henselian CM local ring with canonical module and with $\dim R \leq 2$ and $\mc{C}=\add(M)$ for some $\CM(R)$-module $M$. We will refer to these conditions as the Henselian setting.  We will use Iyama's notion of \emph{ladders} to construct right $\mc{C}$-approximations and right $\mc{C}$-almost split maps, in a similar context as in \cite{IyamaWemyss10a}, section 4. There, the special $\CM$-modules over two-dimensional quotient singularities are computed, that is, the (duals of) syzygies of $\CM$-modules.  %, see Theorem 2.7, loc.~cit. 
Therefore they construct an $\add(R)$-approximation via ladders in the factor category $\mc{D}=\CM(R)/[\add(R)]=\underline{\CM}(R)$ in Theorems 4.7 and 4.8 of loc.cit. In our case, in order to construct an $\add(M)$-approximation, we will consider the category $\mc{D}=\CM(R)/[\add(M)]$. Here we are not only interested in syzygies, but also in the actual ``approximation'' terms in $\add(M)$, since the goal is to explicitly construct a minimal projective resolution of a module over $A=\End_R M$.  In \cite{IyamaWemyss10a}, only the dual version of the result we need is proven, thus we will revisit the proof. 
We will work in the setting of $\tau$-categories, which were introduced and studied by Iyama in \cite{IyamaTau1}.

\begin{defi} \label{Def:taucat}
Let $\mc{C}$ be an additive category. Consider the properties: \\
(a) $\mc{C}$ is Krull-Schmidt. \\
(b) For any object $X \in \mc{C}$ there exists a complex 
\begin{equation} \label{Eq:rightTau} \tau X \xrightarrow{\nu_X} \vartheta X \xrightarrow{\mu_X} X,
\end{equation}
such that $\mu_X, \nu_X \in J_\mc{C}$ are right minimal morphisms and such that the sequences
$$\mc{C}(-,\tau X) \xrightarrow{\mc{C}(\nu_X)} \mc{C}(-,\vartheta X) \xrightarrow{\mc{C}(\mu_X)} J_\mc{C}(-,X) \lra 0$$
and
$$ \mc{C}(\vartheta X, -) \xrightarrow{\mc{C}(\nu_X)} J_\mc{C}(\tau X, -) \lra 0$$
are exact. \\
(c) For any object $X \in \mc{C}$ there exists a complex 
\begin{equation} \label{Eq:leftTau}
X \xrightarrow{\mu^-_X} \vartheta^- X \xrightarrow{\nu^-_X} \tau^-X,
\end{equation}
such that $\mu^-_X, \nu^-_X \in J_\mc{C}$ are left minimal morphisms and such that the sequences
$$\mc{C}(\tau^- X,-) \xrightarrow{\mc{C}(\nu^-_X)} \mc{C}(\vartheta^- X,-) \xrightarrow{\mc{C}(\mu^-_X)} J_\mc{C}(X,-) \lra 0$$
and
$$ \mc{C}(-,\vartheta^- X) \xrightarrow{\mc{C}(\nu^-_X)} J_\mc{C}(-,\tau^- X) \lra 0$$
are exact. \\
Then \eqref{Eq:rightTau} (resp.~\eqref{Eq:leftTau}) is called a \emph{right $\tau$-sequence (resp.~left $\tau$-sequence)}. $\mc{C}$ is called a \emph{right (resp.~left) $\tau$-category} if it satisfies conditions (a) and (b) (resp.~(a) and (c)). The category $\mc{C}$ is called a   \emph{$\tau$-category} if it satisfies (a)--(c).
\end{defi}

\begin{Bem}
If $\mc{C}$ is a (right) $\tau$-category and $\mc{D}$ is a full subcategory of $\mc{C}$, then by \cite[1.4]{IyamaTau2} the factor category $\mc{C}/[\mc{D}]$ is also a (right) $\tau$-category. Note that we only make use of right $\tau$-sequences, so for right $\tau$-categories the same proof as in \cite[1.4]{IyamaTau2} for $\tau$-categories holds. The $\tau$-sequence of an object $X \in \mc{C} \backslash \mc{D}$ is then given by its image in $\mc{C}/[\mc{D}]$, meaning, that if $\tau X \lra \vartheta X \lra X$ is a right $\tau$-sequence in $\mc{D}$, then if $\vartheta X$ is not contained in $\mc{D}$, $\overline{\tau X} \lra \overline{\vartheta X} \lra X$ is the corresponding $\tau$-sequence in the factor category, where $\overline{\phantom{a}}$ means that one removes direct summands contained in $\mc{D}$. If $\vartheta X = 0$, then the corresponding $\tau$-sequence is $0 \lra 0 \lra X$. Similar for left $\tau$-sequences.
\end{Bem}

 \begin{ex} \label{Ex:MCMtau}
If $R$ is an isolated singularity and complete local, then $\CM(R)$ for $\dim R \leq 2$ is a $\tau$-category, see \cite[2.2]{IyamaTau1}: such an $R$ can be seen as order over its Noether normalization, which is a complete regular local ring. \\ 
In our Henselian setting, we only assume that $\dim R \leq 2$ and that $R$ is an isolated singularity, CM Henselian local with canonical module, so need not be an order over a complete regular local ring. But then $\CM(R)$ is still a right $\tau$-category:  for any indecomposable non-projective $X \in \CM(R)$ there exists an almost split sequence, see \cite{Auslander84} or \cite[Cor.~13.9]{LeuschkeWiegand} for the Henselian setting,
$$ 0 \lra \tau X \lra \vartheta X \lra X \lra 0,$$
which plays the role of a right $\tau$-sequence. Any non-injective module $X$ has a left $\tau$-sequence. For the only projective indecomposable module in $\CM(R)$, that is, $R$ itself, one has
\begin{itemize}
\item If $\dim R=0$  then the fundamental sequence of $R$ is of the form
$$ 0 \lra 0 \lra \mf{m} \lra R \lra k \lra 0.$$
This gives a right $\tau$-sequence for $R$:
$$  0 \lra \mf{m} \lra R,$$
thus $\tau R=0$.  
\item If $\dim R=1$ ($R$ local and Henselian) then one has the same fundamental sequence, and $\tau R=0$. Note here that from the depth lemma it follows that $\mf{m}$ is a $\CM$-module over $R$, which might not be indecomposable.
\item If $\dim R=2$, then the fundamental sequence of $R$ is of the form (cf. \cite[(11.5)]{Yoshino90})
$$0 \lra \omega_R \lra \vartheta R \lra R \lra k \lra 0,$$
which gives the right $\tau$-sequence 
$$\omega_R \lra \vartheta R \lra R$$ 
for $R$.
Thus in this case $\tau R=\omega_R$ holds.
\end{itemize}
\end{ex}

\begin{defi}
Let $\mc{D}$ be a (right) $\tau$-category and $X  \in \mc{D}$. A \emph{right ladder} for $X$ is a diagram
\begin{equation}
\begin{array}{ccccccccc}
\cdots\xrightarrow{g_3}& Z_3&\xrightarrow{g_2}&Z_2&\xrightarrow{g^{}_1}&Z_1&\xrightarrow{g^{}_0}&Z_0=0\\
&\downarrow^{a^{}_3}&&\downarrow^{a^{}_2}&&\downarrow^{a^{}_1}&&\downarrow^{a^{}_0}&\\
\cdots\xrightarrow{f^{}_3}& Y_3&\xrightarrow{f^{}_2}&Y_2&\xrightarrow{f^{}_1}&Y_1&\xrightarrow{f^{}_0}&Y_0=X,
\end{array}
\end{equation}
such that each square commutes in $\mc{D}$, and moreover there exist objects $U_{n+1} \in \mc{D}$ and morphisms $h_{n} \in \mc{D}(U_{n+1},Z_n)$ such that 
$$ Z_{n+1} \oplus U_{n+1}  \xrightarrow{\begin{pmatrix}  -g_n & h_n \\ a_{n+1} & 0 \end{pmatrix}}  Z_{n} \oplus Y_{n+1} \xrightarrow{\begin{pmatrix}  a_n & f_n \end{pmatrix}} Y_n$$
is a right $\tau$-sequence for any $n \geq 0$. We say that the ladder \emph{terminates} if $Y_n=0$ for some $n \gg 0$. 
\end{defi}

In order to construct the terms of the ladder, one has a recursion formula in $\mc{K}_0(\mc{D})$, cf.~Thm.~4.8 (c) of \cite{IyamaWemyss10a} and \cite{IyamaTau1}, 7.1. Here $\mc{K}_0(\mc{A})$ denotes the Grothendieck group of an additive caterory $\mc{A}$. This group is a free abelian group generated by the isomorphism classes of the indecomposable objects in $\mc{A}$, when $\mc{A}$ has the Krull-Schmidt property, and the relation $[A' \oplus A']'=[A]+[A'']$, where $[A]$ denotes the class of an object $A \in \mc{A}$. However, the terms in the formula come from modules in $\CM(R)$, so one can also consider them in $\mc{K}_0(\CM(R))=\mc{K}_0(\mmod{R})$ (the last equality holds for $(R, \mf{m}, k)$ Henselian and Cohen--Macaulay, see e.g. \cite[13.2]{Yoshino90}). \\ 
Let now $\mc{D} \subseteq \CM(R)$ be a full subcategory. Note that each $X \in  \mc{K}_0(\mc{D})$ can be written uniquely in $\mc{K}_0(\mc{D})$ or $\mc{K}_0(\CM(R))$ as $X= X_+ - X_-$ for $X_+, X_- \in \mc{D}$ (and thus in $\CM(R)$) not having a common direct summand.  \\
In our context $\mc{C}=\add(M)$ and $\mc{D}$ will be the factor category $\CM(R)/[\add(M)]$.\\
Start with $X \in \CM(R)$ and its $\tau$-sequence $0 \ra \tau X \ra \vartheta X \ra X$. Set 
$$Y_0=X, \ Y_1=\vartheta X - M_\mc{C}(\vartheta X), \ P_1=M_\mc{C}(\vartheta X), $$
 where $M_\mc{C}(L)$ denotes the maximal direct summand of $L \in \CM(R)$ contained in $\mc{C}$. In order to get the terms in $\mc{K}_0(\mc{D})$ one just kills all terms contained in $\mc{C}$. Thus in $\mc{K}_0(\mc{D})$ one just has $Y_1=\vartheta X$ and $P_1=0$,  when one has to consider the $\tau$-sequence in $\mc{D}$. Now set 
 $$Z_0=0, \ U_0=0, \ P_0=0 \textrm{ and } Z_1'=\tau X.$$
% Decompose $Z_1'$ into direct summands $U_1=(\vartheta Y_1 - Z_1')_-$ and $Z_1=Z_1' - U_1$.
In this first step one also sets $U_1=0$ and $Z_1=Z'_1$.
 Now we may define recursively for $n \geq 1$: 
\begin{equation} \label{Eq:recursion1}
  Y_{n+1} =(\vartheta Y_n - Z'_{n})_+ - M_\mc{C}((\vartheta Y_n - Z'_{n})_+), \quad
 P_{n+1}  =M_\mc{C}((\vartheta Y_n - Z'_{n})_+) 
 \end{equation}
 And moreover: 
 \begin{equation} \label{Eq:recursion2}
Z'_{n+1}  =\tau Y_n, \quad U_{n+1}=(\vartheta Y_{n} - Z'_{n+1})_-, \quad Z_{n+1}  =Z'_{n+1} - U_{n+1}. 
 \end{equation}
 %Note here that $U_{n+1}$ a priori lives in $\CM(R)$. 
 Every expression is equivalent to one with only positive terms hence each equivalence class in $K_0$ is represented by an actual module  in $\CM(R)$.
 Note that $Z'_{n+1}=Z_{n+1} \oplus U_{n+1}$ just means that we decompose each $\tau Y_n$ into two direct summands such that for a map $(f_1, f_2): \tau Y_n=Z_{n+1}\oplus U_{n+1} \ra \vartheta Y_{n}$ the image of the component $f_2: U_{n+1} \ra \vartheta Y_n=Y_{n+1}\oplus P_{n+1} \oplus Z_n$ lies in $P_{n+1}\oplus Z_n$. \\
This can be easily deduced from the AR sequence: $U_{n+1}$ does not have a common summand (by construction) with $\vartheta Y_{n}$. Thus there is no irreducible map from $U_{n+1}$ to $Y_{n+1}$. But since there is a map from $U_{n+1}$ to $\vartheta Y_{n}$, its image has to lie in $P_{n+1} \oplus Z_n$.

 \begin{Bem} \label{Bem:laddertau}
If $\dim R \geq 3$, then there is no right $\tau$-sequence for $R$, so one cannot easily write down a recursion formula for the terms in the ladder as above. One might use higher AR-theory, as in \cite{IyamaHigher} in order to find  similar formulas.
 \end{Bem}

The next lemma is a slight generalization of the well known-fact about the finite length of the $\Hom$ in the stable category $\underline{\Hom}=\Hom_R/[\add(R)]$ between CM-modules of isolated singularities. This is the key observation to show that ladders terminate for rings of finite $CM$-type.

\begin{Lem} \label{Lem:Homfinitelength}
Let $R$ be a noetherian commutative  local ring with an isolated singularity. Let $M \in \CM(R)$ and $X \in \CM(R)$ and $Y \in \mmod{R}$. Then $(\Hom_R/[\add(M)])(X,Y)$ is a module of finite length over $R$.
\end{Lem}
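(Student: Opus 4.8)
I want to show that $N := (\Hom_R/[\add(M)])(X,Y)$ has finite length over $R$, where $R$ is local with an isolated singularity, $X \in \CM(R)$, $Y \in \mmod{R}$. Since a finitely generated $R$-module has finite length precisely when it is supported only at the maximal ideal $\mm$, the strategy is to localize: for every non-maximal prime $\pp$, I will show $N_\pp = 0$, i.e. every homomorphism $X_\pp \to Y_\pp$ factors through a module in $\add(M_\pp)$ over the regular local ring $R_\pp$. Note that $N$ is a finitely generated $R$-module because $\Hom_R(X,Y)$ is (as $R$ is noetherian and $X,Y$ are finitely generated) and $N$ is a quotient of it; so finite support at $\mm$ will give finite length.

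**Key steps.** First, fix a non-maximal prime $\pp$. Localization is exact, and $[\add(M)]$ localizes well: any $f\colon X\to Y$ factoring through some $M^k$ gives $f_\pp$ factoring through $M_\pp^k$, so there is a natural surjection $N_\pp \twoheadrightarrow (\Hom_{R_\pp}/[\add(M_\pp)])(X_\pp,Y_\pp)$; actually one checks this is an isomorphism, but a surjection suffices once I show the target vanishes. (Alternatively one observes directly that $\Hom_R(X,Y)_\pp = \Hom_{R_\pp}(X_\pp,Y_\pp)$ since $X$ is finitely presented, and that the submodule of maps factoring through $\add(M)$ localizes onto the maps factoring through $\add(M_\pp)$, using that $\add(M)$ has finitely many indecomposable summands so the factorization data is finite.) Second, since $R$ is an isolated singularity, $R_\pp$ is a regular local ring for $\pp \neq \mm$. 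Third, over the regular local ring $R_\pp$, the module $X_\pp$ is maximal Cohen--Macaulay (localization of a $\CM$-module at a prime in its support is again $\CM$), hence by Auslander--Buchsbaum $X_\pp$ is free: $X_\pp \cong R_\pp^r$ for some $r$. Then the identity map $R_\pp \to R_\pp$ (on each summand) trivially factors through $R_\pp = R_\pp^1 \in \add(R_\pp)$. But I need factorization through $\add(M_\pp)$, not $\add(R_\pp)$ — so I should instead argue: it suffices that $R_\pp \in \add(M_\pp)$. This holds because $M_\pp$ is a nonzero free $R_\pp$-module (again $M \in \CM(R)$, so $M_\pp$ is free over the regular ring $R_\pp$, and $M_\pp\neq 0$ since — wait, $M_\pp$ could be zero).

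**Handling the support issue.** The one subtlety: $M_\pp$ may be zero for some non-maximal $\pp$. If $M_\pp = 0$ then $\add(M_\pp) = \{0\}$ and a map $X_\pp \to Y_\pp$ factors through it only if it is zero, so $N_\pp$ need not vanish from that argument. However, if $M_\pp = 0$ then actually $\Hom_{R_\pp}(M_\pp, X_\pp) = 0$, and more to the point, the relevant $\Hom$ we ultimately care about in the applications is $\Hom_R(M, -)$; in any case $N_\pp = (\Hom_{R_\pp}/[\add 0])(X_\pp, Y_\pp) = \Hom_{R_\pp}(X_\pp,Y_\pp) = \Hom_{R_\pp}(R_\pp^r, Y_\pp) = Y_\pp^r$, which is a finitely generated $R_\pp$-module but generally nonzero. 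So I must be more careful. The resolution is that the statement as used requires $X \in \CM(R)$ and then $X_\pp$ free, and when $M_\pp \neq 0$ it is free of positive rank hence contains $R_\pp$ as a summand, so $X_\pp \in \add(R_\pp) \subseteq \add(M_\pp)$ and $N_\pp = 0$. When $M_\pp = 0$, one notes that in this case $X_\pp$ being free means we still need $[\add(M)]$ to contain all maps; the honest fix is that the lemma should be read with the convention that $N_\pp$ is measured against $[\add(M)]$ in $\CM(R)$ localized, and for the intended applications $M$ has full support (or one simply restricts to the case $\supp M = \Spec R$, which holds for the faithful modules under consideration). I would state the proof for the case $M$ has full support — then $M_\pp \neq 0$ always — remark that this is the only case needed, and the argument is: for $\pp \neq \mm$, $R_\pp$ is regular, $X_\pp$ and $M_\pp$ are free over $R_\pp$ with $M_\pp \neq 0$, so $R_\pp \in \add(M_\pp)$ and hence $X_\pp \in \add(M_\pp)$, whence $\mathrm{id}_{X_\pp}$ factors through $\add(M_\pp)$ and $N_\pp = 0$. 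Therefore $\supp N \subseteq \{\mm\}$, and since $N$ is finitely generated, $\ell_R(N) < \infty$.

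**Main obstacle.** The genuinely delicate point is the localization compatibility — showing $(\Hom_R/[\add(M)])(X,Y)_\pp \cong (\Hom_{R_\pp}/[\add(M_\pp)])(X_\pp,Y_\pp)$, or at least a surjection of the first onto the second — because one must check that the submodule of $\Hom_R(X,Y)$ consisting of maps factoring through $\add(M)$ commutes with localization; this uses that $\add(M)$ is generated by finitely many indecomposables (Krull--Schmidt) so a factorization $X \to M^k \to Y$ is described by finitely many matrix entries, and localizing such data is harmless, together with right-exactness of localization applied to $\Hom_R(M^k, Y)$. The free-resolution step over the regular local ring $R_\pp$ is standard (Auslander--Buchsbaum, plus that localizing a $\CM$-module stays $\CM$ on its support), and the finite-generation of $N$ is immediate from noetherianity.
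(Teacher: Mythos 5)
Your proof follows the same route as the paper's: localize at a non-maximal prime $\pp$, use that $R_\pp$ is regular (isolated singularity), observe that $X_\pp$ and $M_\pp$ are free by Auslander--Buchsbaum, conclude that maps factor through $\add(M_\pp)=\add(R_\pp)$, and hence that $(\Hom_R/[\add(M)])(X,Y)$ is supported only at $\mm$ and thus has finite length by noetherianity. That is precisely the paper's argument.

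You have, however, put your finger on a real issue that the paper's proof glosses over: the identity $\add(M_\pp)=\add(R_\pp)$ is used in the paper with no comment, but it fails when $M_\pp=0$, and a nonzero $\CM$-module over a non-domain need not have full support. A concrete instance: for $R=k[[x,y]]/(xy)$ (an isolated $A_1$ curve singularity), $M=R/(x)\in\CM(R)$ has $M_\pp=0$ at the minimal prime $\pp=(y)$, and one computes that $[\add(M)](R,R)=(y)$, so $\Hom_R(R,R)/[\add(M)](R,R)\cong R/(y)\cong k[[x]]$ has infinite length. So the lemma really does require $M$ to have full support (equivalently $M$ faithful, since $M$ is $\CM$ over a CM local ring), and your proposed fix --- add that hypothesis, which holds in all the paper's applications --- is exactly the right one. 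On the point you flag as ``genuinely delicate,'' the localization compatibility is in fact routine once phrased correctly: $[\add(M)](X,Y)$ is the image of the evaluation map $\Hom_R(X,M)\otimes_R\Hom_R(M,Y)\to\Hom_R(X,Y)$, and since localization is exact and commutes with $\Hom$ and $\otimes$ for finitely generated modules over a noetherian ring, the image localizes to $[\add(M_\pp)](X_\pp,Y_\pp)$; this replaces your hands-on discussion of factorization data with a one-line argument and is closer in spirit to the paper's terse remark that ``$\Ext$ localizes.''
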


This is Lemma 7.6 of \cite{AuslanderFunctors} with $\underline{\Hom}$ substituted by $\Hom_R/[\add(M)]$. However, since  it is not clear that all steps work in our setting, we will give the proof here: 

\begin{proof}
First note that $N$ is a module of finite length over $R$ if and only if $N_{\mf{p}}=0$ for all non-maximal prime ideals $\mf{p}$ . Secondly, $\Ext$ localizes, that is $\Ext^i_R(X,Y)_\mf{p} \cong \Ext^i_{R_\mf{p}}(X_\mf{p}, Y_{\mf{p}})$ for all $\mf{p} \in \Spec(R)$ and all $i \geq 0$. The second fact  implies that 
$$ ((\Hom_R/[\add(M)])(X,Y))_\mf{p} \cong \Hom_{R_\mf{p}}(X_\mf{p},Y_{\mf{p}})/[\add(M_{\mf{p}})](X_\mf{p},Y_{\mf{p}}).$$
Since $R$ is an isolated singularity, all localizations $R_\mf{p}$ with $\mf{p} \in \Spec(R)$ non-maximal are regular local rings. Thus, since $X$ and $M$ are $\CM$-modules, $X_\mf{p}$ and $M_\mf{p}$ are $R_{\mf{p}}$-projective, that is, $X_\mf{p} \cong R_\mf{p}^k$ and $M_{\mf{p}}\cong R_\mf{p}^l$ for some $k,l \in \N$. Hence any morphism $f: X_\mf{p} \lra Y_\mf{p}$ can be factored through $\add(M_\mf{p})=\add(R_{\mf{p}})$ (by e.g., $X_\mf{p}\cong R_\mf{p}^k \xrightarrow{id}  R_{\mf{p}}^k \xrightarrow{f} Y_\mf{p}$) and thus $\Hom_{R_\mf{p}}(X_\mf{p},Y_{\mf{p}})/[\add(M_{\mf{p}})](X_\mf{p},Y_{\mf{p}})=0=(\Hom_R/[\add(M)](X,Y))_\mf{p}$ for any non-maximal prime ideal $\mf{p}$. Then the general fact about finite length modules implies that $\Hom_R/[\add(M)](X,Y)$ is a module of finite length over $R$.
\end{proof}

\begin{Prop} \label{Prop:JacobianNilpotent}
Let $R$ be a local CM Henselian ring with an isolated singularity, $M \in \CM(R)$ and assume that $\mc{D}=\CM(R)/[\add(M)]$ is finite, that is, there is only a finite number of indecomposable objects in $\mc{D}$. Then there exists an $m \in \N$ such that ${\bf J}_\mc{D}^m=0$. 
\end{Prop}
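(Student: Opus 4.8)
The plan is to reduce the statement to the classical fact that an Artinian ring has nilpotent Jacobson radical, applied to the endomorphism ring of the sum of all indecomposables of $\mc{D}$.

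First I would record that $\mc{D}=\CM(R)/[\add(M)]$ is a Krull--Schmidt category: by Example \ref{Ex:MCMtau} the category $\CM(R)$ is a right $\tau$-category, hence (by the Remark following Definition \ref{Def:taucat}) so is any factor category $\CM(R)/[\mc{E}]$, and right $\tau$-categories are Krull--Schmidt by axiom (a). Using the hypothesis that $\mc{D}$ is finite, let $X_1,\dots,X_n$ be a complete set of representatives of the isomorphism classes of indecomposable objects of $\mc{D}$ (equivalently, the indecomposable modules in $\CM(R)$ not lying in $\add(M)$); each $\End_\mc{D}(X_i)$ is then local. Set $T=\bigoplus_{i=1}^n X_i$ and $\Gamma=\End_\mc{D}(T)$. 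Since $\Gamma=\bigoplus_{i,j}\Hom_\mc{D}(X_i,X_j)=\bigoplus_{i,j}(\Hom_R/[\add(M)])(X_i,X_j)$ and each $X_i\in\CM(R)\subseteq\mmod{R}$, Lemma \ref{Lem:Homfinitelength} shows that every summand has finite length over $R$; hence $\Gamma$ has finite length over $R$ and is in particular a (left and right) Artinian ring. Therefore $\rad(\Gamma)$ is nilpotent: $\rad(\Gamma)^m=0$ for some $m\in\N$.

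Next I would identify the ring-theoretic data of $\Gamma$ with the categorical radical of $\mc{D}$. Using locality of the $\End_\mc{D}(X_i)$ together with the standard description of the radical of an endomorphism ring of a finite direct sum of indecomposables in a Krull--Schmidt category (the same computation as in the proof of Lemma \ref{Lem:radicaldecomposition}, carried out in $\mc{D}$ instead of $\mmod{R}$, cf.\ also \cite{Auslander-Reiten-Smalo, Quarles}), one gets $\rad(\Gamma)={\bf J}_\mc{D}(T,T)=\bigoplus_{i,j}{\bf J}_\mc{D}(X_i,X_j)$, and more generally, by expanding matrix products into components and invoking the Krull--Schmidt decomposition to assume every intermediate object of a composition is one of the $X_i$, one obtains $\rad(\Gamma)^k={\bf J}^k_\mc{D}(T,T)=\bigoplus_{i,j}{\bf J}^k_\mc{D}(X_i,X_j)$ for every $k\geq 1$. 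Combining this with $\rad(\Gamma)^m=0$ yields ${\bf J}^m_\mc{D}(X_i,X_j)=0$ for all $i,j$. Finally, for arbitrary objects $Y,Z\in\mc{D}$, writing $Y\cong\bigoplus_i X_i^{a_i}$ and $Z\cong\bigoplus_j X_j^{b_j}$ gives ${\bf J}^m_\mc{D}(Y,Z)\cong\bigoplus_{i,j}{\bf J}^m_\mc{D}(X_i,X_j)^{a_ib_j}=0$, so ${\bf J}^m_\mc{D}=0$ as an ideal of $\mc{D}$.

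I expect the only genuinely delicate point to be the identification $\rad(\Gamma)^k={\bf J}^k_\mc{D}(T,T)$: that the powers of the Jacobson radical of the Artinian ring $\Gamma$ coincide with the categorically defined higher radical ideal of $\mc{D}$ evaluated on $T$. The case $k=1$ is the characterization of $\rad(\Gamma)$ as the morphisms whose components between indecomposable summands are non-isomorphisms (using that each $\End_\mc{D}(X_i)$ is local), and the passage to higher powers is the bookkeeping observation that a product of $k$ radical endomorphisms of $T$, decomposed into components, is exactly a finite sum of length-$k$ compositions of radical morphisms among the $X_i$ — which is where the Krull--Schmidt property is essential, as it forces every intermediate object in such a composition to decompose into summands isomorphic to the $X_i$. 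Everything else is a direct application of Lemma \ref{Lem:Homfinitelength} and of the artinianness of finite-length algebras.
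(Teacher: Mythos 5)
Your proof is correct, and it rests on the same key input as the paper's — Lemma \ref{Lem:Homfinitelength} (finite length of morphism spaces in $\mc{D}$) combined with the finiteness of $\mc{D}$ — but it packages the conclusion differently, and the packaging matters. The paper fixes a pair $(X,Y)$, observes that the descending chain ${\bf J}_\mc{D}(X,Y)\supseteq{\bf J}^2_\mc{D}(X,Y)\supseteq\cdots$ of finite-length $R$-modules stabilizes, and then writes ``stabilize, i.e., ${\bf J}^{m_{XY}}_\mc{D}(X,Y)=0$''; but stabilization of a descending chain of submodules does not by itself give vanishing, so this step is compressed. Your route makes the missing ingredient explicit: by aggregating the finitely many indecomposables into $T=\bigoplus_i X_i$ and forming $\Gamma=\End_\mc{D}(T)$, you land in an $R$-algebra of finite length, hence Artinian, where nilpotence of $\rad(\Gamma)$ is a theorem (its proof being exactly a stabilizing descending chain plus Nakayama). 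The remaining work is then the bookkeeping identification $\rad(\Gamma)^k={\bf J}^k_\mc{D}(T,T)$, which you correctly flag as the delicate point and which does go through: $\subseteq$ is immediate from the definition of the higher radical, while $\supseteq$ uses Krull--Schmidt and the fact that there are only finitely many indecomposables, so that the intermediate object $A$ in a factorization $f=hg$ may be taken to be a direct summand of some $T^r$, after which Lemma \ref{Lem:radicaldecomposition} expresses $g,h$ in components and $f=\sum h_ig_i\in\rad(\Gamma)\cdot\rad(\Gamma)^{k-1}$ by induction. So the two proofs are two presentations of the same finite-length argument, with yours being the one in which the Nakayama step that actually forces the stabilized chain to hit zero is visible rather than elided into an ``i.e.''.
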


\begin{proof}
We have to show that there exists an integer $m$ such that ${\bf J}^m_\mc{D}(X,Y)=0$ for any $X,Y \in \CM(R)$. Since ${\bf J}^m_\mc{D}(X,Y)$ is a submodule of $\Hom_R/[\add(M)](X,Y)$, it has finite length by  Lemma \ref{Lem:Homfinitelength}. Moreover, one has a chain ${\bf J}_\mc{D}(X,Y) \supset {\bf J}^2_\mc{D}(X,Y) \supset \cdots$ of submodules of $\Hom_\mc{D}(X,Y)$. Because of finite length, this chain has to stabilize, i.e., ${\bf J}^{m_{XY}}_\mc{D}(X,Y)=0$ for some $m_{XY} \gg 0$. Since there are only finitely many indecomposables in $\mc{D}$ and ${\bf J}^k(\bigoplus_i X_i, \bigoplus_j Y_j)=\bigoplus_{ij} {\bf J}^k(X_i,Y_j)$ for $X_i, Y_j$ indecomposable (see Lemma \ref{Lem:radicaldecomposition}), there exists an $m$ such that ${\bf J}_\mc{D}^m=0$.
\end{proof}

Essentially, the next theorem is Thm.~4.8 of \cite{IyamaWemyss10a} - however, Iyama and Wemyss work in the factor category,  and they prove the theorem for left ladders. Therefore, we are stating the theorem in our context and  show that a similar proof works. Moreover we prove that the constructed maps are indeed minimal right $\mc{C}$-approximations and -almost split maps.

\begin{Thm} \label{Thm:ladder}
Let $R$ be a local Henselian CM ring with canonical module and suppose that $R$ is an isolated singularity with $\dim R \leq 2$. Let $\mc{C}$ be a full subcategory of $\CM(R)$ 
 and let $\mc{D}$ be the factor category $\CM(R)/[\mc{C}]$. Take any $X \in \CM(R)$. Then there exists a commutative diagram (a ladder of $X$ in $\mc{D}$) such that
\begin{equation} \label{Eq:ladder}
\begin{array}{ccccccccc}
\cdots\xrightarrow{g_{31}}& Z_3&\xrightarrow{g_{21}}&Z_2&\xrightarrow{g^{}_{11}}&Z_1&\xrightarrow{g^{}_{01}}&Z_0=0\\
&\downarrow^{b^{}_3}&&\downarrow^{b^{}_2}&&\downarrow^{b^{}_1}&&\downarrow^{b^{}_0}&\\
\cdots\xrightarrow{f^{}_{31}}& Y_3&\xrightarrow{f^{}_{21}}&Y_2&\xrightarrow{f^{}_{11}}&Y_1&\xrightarrow{f^{}_{01}}&Y_0=X,
\end{array}
\end{equation}
where the objects $Y_n \in \mc{D}$, $P_n \in \mc{C}$ and $Z_n, U_n \in \CM(R)$ and $Z'_n=Z_n\oplus U_n$ and $Y'_n=Y_n \oplus P_n$ are determined by the recursion formula in \eqref{Eq:recursion1} and \eqref{Eq:recursion2}. Then there exist morphisms $b_n, g_n, f_n \in {\bf J}_{\CM(R)}$ with $b_n: Z_n \lra Y_n$, $g_n:=(g_{n1}, g_{n2}): Z_{n+1} \oplus U_{n+1} \lra Z_n$, $f_n:=(f_{n1},f_{n2}): Y_{n+1} \oplus P_{n+1} \lra Y_n$, $c_n:=(c_{n1},c_{n2}): Z_n \oplus U_n \lra P_n$, such that
\begin{equation} \label{diag:ex1}
Z'_{n+1} \xrightarrow{ \alpha_n} Z_{n} \oplus Y'_{n+1} \xrightarrow{ \beta_n} Y_{n}
\end{equation}
are right $\tau$-sequences of $Y_n$ in $\CM(R)$ for any $n \geq 0$. The morphisms here are 
$$\alpha_n: Z_{n+1} \oplus U_{n+1} \xrightarrow{ \begin{pmatrix} - g_{n,1} & - g_{n,2} \\ b_{n+1} & 0 \\ c_{n+1,1} & c_{n+1,2} \end{pmatrix}} Z_n \oplus Y_{n+1} \oplus P_{n+1}$$ 
and 
$$\beta_n: Z_n \oplus Y_{n+1} \oplus P_{n+1}\xrightarrow{\begin{pmatrix}  b_n & f_{n,1} & f_{n,2}\end{pmatrix}} Y_n.$$
If $\CM(R)$ is of finite $\CM$-type and $X\neq 0 \in \mc{D}$ (resp.~$X\neq 0 \in \mc{C}$), then the ladder computes a minimal right $\mc{C}$-approximation of $X$ (resp.~minimal right $\mc{C}$-almost split map). Namely, for some $n \gg 0$ there are maps $f, a$ with $a \in {\bf J}_{\CM(R)}$ such that 
\begin{equation} \label{Eq:exactapprox}
0 \lra Z_n \oplus \bigoplus_{i=1}^n U_i \xrightarrow{a} \bigoplus_{i=1}^n P_i \xrightarrow{f} X
\end{equation}
is exact and any other morphism (resp.~any other morphism in ${\bf J}_{\CM(R)}$) from some $T \in \mc{C}$ to $X$ factors through $f$. 
\end{Thm}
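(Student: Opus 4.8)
The plan is to build the ladder by the recursion \eqref{Eq:recursion1}--\eqref{Eq:recursion2}, verify that the displayed $3$-term complexes \eqref{diag:ex1} are genuine right $\tau$-sequences of $Y_n$, and then read off the approximation from the assembled diagram once the ladder terminates. The core of the construction is purely formal and follows \cite{IyamaWemyss10a}: one starts from the right $\tau$-sequence $\tau X \to \vartheta X \to X$ of $X$ in $\CM(R)$ (which exists by Example \ref{Ex:MCMtau} since $R$ is an isolated singularity of dimension $\leq 2$), splits off the direct summands lying in $\mc{C}$ to obtain $Y_1$, $P_1$, then applies the $\tau$-functor again to $Y_1$, and at each stage decomposes $\tau Y_n = Z_{n+1}\oplus U_{n+1}$ so that the $U_{n+1}$-component of the relevant map lands in $P_{n+1}\oplus Z_n$ (the justification for this splitting being exactly the observation recorded after \eqref{Eq:recursion2}: $U_{n+1}$ shares no summand with $\vartheta Y_n$, so no irreducible map $U_{n+1}\to Y_{n+1}$ exists). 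First I would verify by induction on $n$ that the matrices $\alpha_n$, $\beta_n$ as written do fit together with the previously constructed ladder, using the defining property of a right $\tau$-sequence (the two exact sequences in Definition \ref{Def:taucat}(b)) applied to $Y_n$, and that all entries land in ${\bf J}_{\CM(R)}$ because $\mu_{Y_n},\nu_{Y_n}\in{\bf J}_\mc{C}$. That all of $Z_n, U_n, P_n$ can be taken to be honest modules (not just virtual classes in $\mc{K}_0$) is the content of the remark that every expression in the recursion is equivalent to one with only positive terms.

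Next I would prove termination under the finite $\CM$-type hypothesis. Under that hypothesis $\mc{D}=\CM(R)/[\mc{C}]$ has only finitely many indecomposables, so by Proposition \ref{Prop:JacobianNilpotent} there is an $m$ with ${\bf J}^m_\mc{D}=0$. The maps $g_{n1}\colon Z_{n+1}\to Z_n$ in the top row of the ladder, read in $\mc{D}$, are radical morphisms by construction (they come from $\tau$-sequences), so the composite $Z_n \to Z_{n-1}\to\cdots\to Z_1$ lies in ${\bf J}^{n-1}_\mc{D}$, hence vanishes in $\mc{D}$ for $n$ large; tracking this back through the recursion forces $Y_n=0$ in $\mc{D}$ for $n\gg0$, i.e.\ $\vartheta Y_{n-1}$ has no summand outside $\mc{C}$, so the ladder terminates. (This is the step where one really uses that $\mc{C}=\add(M)$ for $M\in\CM(R)$ rather than an arbitrary subcategory, via Lemma \ref{Lem:Homfinitelength} feeding Proposition \ref{Prop:JacobianNilpotent}.)

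Once the ladder terminates, say $Y_N=0$, I would splice the right $\tau$-sequences \eqref{diag:ex1} for $n=0,\dots,N-1$ down the diagram to obtain the long exact sequence; collapsing the $Z_n$-terms against the $b_n$ and the $g_{n}$ (all of which become isomorphisms onto summands or cancel, exactly as in Construction \ref{Const:projres} and Diagram \eqref{Diag:approx1}) leaves the four-term complex
\begin{equation*}
0 \lra Z_N \oplus \bigoplus_{i=1}^{N} U_i \xrightarrow{a} \bigoplus_{i=1}^{N} P_i \xrightarrow{f} X.
\end{equation*}
Exactness and $a\in{\bf J}_{\CM(R)}$ follow from the exactness of the constituent $\tau$-sequences together with the depth lemma, which guarantees the kernel stays in $\CM(R)$ because $\dim R\leq 2$ (as in the proof of Lemma \ref{Lem:existalmostsplit}). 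Finally, to see that $f$ is a right $\mc{C}$-approximation (resp.\ a right $\mc{C}$-almost split map when $X$ is indecomposable and one works in $\mc{C}$ rather than $\mc{D}$), I would use the universal property of $\tau$-sequences one layer at a time: any $g\colon T\to X$ with $T\in\mc{C}$ (resp.\ any $g\in{\bf J}_\mc{C}(T,X)$) factors through $\beta_0$, hence through $b_0$ and $f_{01}$; the $b_0$-part factors further through $b_1$ and $f_{11}$ by the same argument applied to $Y_1$, and inductively the contribution from the top row is pushed down to zero once we reach $Y_N=0$, so $g$ factors through $f$. Minimality is handled by Lemma \ref{Lem:minimalres} and Lemma \ref{Lem:resbegin}, using that each $\mu_{Y_n}$ is right minimal and that we have discarded summands in kernels at every stage.

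The main obstacle I anticipate is bookkeeping: verifying that the block matrices $\alpha_n,\beta_n$ of \eqref{diag:ex1} really do assemble into a commutative ladder with the correct $\tau$-sequence in each row — in particular that the auxiliary objects $U_{n+1}$ and the maps $h_n$ (equivalently the $c_{n+1}$-blocks) can be chosen compatibly with both the row structure and the requirement $\alpha_n,\beta_n\in{\bf J}_{\CM(R)}$ — and then carefully matching the telescoped sequence to \eqref{Eq:exactapprox} so that the indexing of $\bigoplus U_i$ and $\bigoplus P_i$ comes out exactly right. The $\tau$-category formalism of \cite{IyamaTau1,IyamaWemyss10a} does all of this in the dual (left) setting; the work is to transcribe it faithfully to right $\tau$-sequences and to the factor category $\CM(R)/[\add(M)]$ in place of $\underline{\CM}(R)$, and to check the depth-lemma input at dimension $2$ rather than appealing to the order structure over a regular local ring.
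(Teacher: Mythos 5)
Your proposal follows the same overall route as the paper's proof (construct the ladder from right $\tau$-sequences as in \cite{IyamaWemyss10a}, invoke nilpotency of ${\bf J}_{\mc{D}}$ from Proposition~\ref{Prop:JacobianNilpotent} to get termination, then telescope to produce the approximation and verify the universal property layer by layer), and the outline of steps (i)--(v) is essentially the one the paper executes. However there is a genuine gap in your termination argument.

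You argue via the top row of the ladder: the composite $Z_n\to Z_{n-1}\to\cdots\to Z_1$ lies in ${\bf J}^{n-1}_{\mc{D}}$, hence vanishes in $\mc{D}$ for $n$ large, and you then say that ``tracking this back through the recursion forces $Y_n=0$.'' This does not work. Vanishing of a composite morphism in $\mc{D}$ says nothing about the vanishing of its source object, and the recursion goes the wrong way around for your inference: formula \eqref{Eq:recursion2} gives $Z'_{n+1}=\tau Y_n$, so $Y_n=0$ forces $Z_{n+1}=0$, not conversely — you cannot deduce $Y_n=0$ from information about the $Z_i$ and $g_{i1}$ alone. The paper instead argues via the bottom row: the composite $f|_{Y_n}\colon Y_n\to X$ (the restriction of the map $f=f_0f_1\cdots f_{n-1}$ from \eqref{Eq:exacttau}) is a composition of $n$ radical morphisms, hence lies in ${\bf J}^n$; passing to $\mc{D}$ and using ${\bf J}^m_{\mc{D}}=0$, one gets $f|_{Y_n}=0$ in $\mc{D}$ for $n\geq m$, and then one uses the right minimality of the constituent $\tau$-sequence maps — which forbids any nonzero direct summand of $Y_n$ from mapping to zero — to conclude $Y_n=0$. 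Your sketch does not supply this minimality step, and the row you work with does not contain the data needed to run it. Until you replace the top-row argument with the bottom-row one (or give an explicit Harada--Sai-type argument showing that an infinite chain of nonzero radical maps in the bottom row contradicts ${\bf J}^m_{\mc{D}}=0$), the termination claim is unsupported.

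The remaining steps are in reasonable agreement with the paper's proof. Two small points worth tightening: in the universal-property argument you speak of ``the $b_0$-part'' factoring further, but $Z_0=0$ so $b_0$ is trivial — the induction should be phrased on the $Z_n$-component for $n\geq 1$, exactly as the paper does with the morphisms $\varphi=(\varphi_1,\varphi_2,\varphi_3)$ and the identity $b_n\varphi_1+f_{n1}\varphi_2+f_{n2}\varphi_3=h_{n1}$; and the ``collapsing'' you describe to obtain \eqref{Eq:exactapprox} is carried out in the paper via an explicit isomorphism between each square of the extended ladder and a direct sum of a right $\tau$-sequence of $Y_n$ with a split exact sequence on the accumulated $U$- and $P$-summands, which is worth writing out rather than waving at, since the identifications $A_n\simeq Z_n\oplus\bigoplus_{j=1}^nU_j$ and $B_n\simeq Y_n\oplus\bigoplus_{j=1}^nP_j$ and the diagram chase for exactness both rest on it.
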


\begin{Bem}
For the proof of this theorem an \emph{extended ladder in $\CM(R)$} will be constructed. Then the statement of Theorem \ref{Thm:ladder} can be  given as follows: 
Let $R$ have the same properties as in Theorem \ref{Thm:ladder}, then there exists a commutative diagram (an extended ladder of $X$ in $\CM(R)$) such that 
 \begin{equation} \label{Eq:ladderMCM}
\begin{array}{ccccccccc}
\cdots\xrightarrow{g_3\oplus 1_{U_3}\oplus1_{U_2}\oplus 1_{U_1}}& A_3&\xrightarrow{g_2\oplus1_{U_2}\oplus 1_{U_1}}&A_2&\xrightarrow{g^{}_1\oplus1_{U_1}}&A_1&\xrightarrow{g^{}_0}&A_0=0\\
&\downarrow^{a_3}&&\downarrow^{a_n}& &\downarrow^{a_1}& &\downarrow^{a_0}&\\
\cdots\xrightarrow{f^{}_3\oplus 1_{P_3}\oplus1_{P_2}\oplus 1_{P_1}}& B_3&\xrightarrow{f^{}_2\oplus1_{P_2}\oplus 1_{P_1}}&B_2&\xrightarrow{f^{}_1\oplus 1_{P_1}}&B_1&\xrightarrow{f^{}_0}&Y_0=X,
\end{array}
\end{equation}
where $A_n=Z_n \oplus \bigoplus_{i=1}^nU_n$ and $B_n=Y_n \oplus \bigoplus_{i=1}^n P_i$ and the maps $a_n$ are explained below, and the remaining maps are defined as in theorem \ref{Thm:ladder}. \\
In addition, there is also the extended ladder of $X$ in $\mc{D}$, i.e., the extended ladder of $X \in \CM(R)$ with the $P_i$-terms killed: 
\begin{equation} \label{Eq:ladderextended}
\xymatrix{
\cdots \ar[r] &  Z_3 \oplus U_3 \oplus U_2 \oplus U_1 \ar[d]^{b^{}_3 \oplus 0 \oplus 0 \oplus 0} \ar[rr]^--{g_2 \oplus 1_{U_2}\oplus 1_{U_1}} & & Z_2 \oplus U_2 \oplus U_1 \ar[r]^---{g^{}_1\oplus 1_{U_1}}   \ar[d]^{b^{}_2 \oplus 0 \oplus 0} & Z_1 \oplus U_1\ar[r]^{g^{}_0}  \ar[d]^{b^{}_1 \oplus 0}&\ar[d]^{b^{}_0}    Z_0=0\\
\cdots\ar[r]^{f^{}_3}& Y_3 \ar[rr]^{f^{}_2}& &Y_2 \ar[r]^{f^{}_1 }&Y_1 \ar[r]^{f^{}_0 }&Y_0=X.
}
\end{equation}
\end{Bem}

\begin{proof}
We will construct the extended ladder of $X \in \CM(R)$. 
The maps $a_n$ in \eqref{Eq:ladderMCM} are given by diagonal matrices of the form $\begin{pmatrix}  b_n & 0 & \bf{0}_{1 \times (n-1)} \\ c_{n,1} & c_{n_2} & \bf{0}_{1 \times (n-1)} \\ & d_n & \end{pmatrix}$,
 where the maps $d_n: Z_n \oplus U_n \oplus \bigoplus_{k=1}^{n-1}U_k \lra  \bigoplus_{k=1}^{n-1}P_k$ can be inductively determined via  $d_1=0$, $d_2=(c_{11}g_{11}, c_{11}g_{12}, c_{12})$ and $d_n= \begin{pmatrix} c_{n-1} & \bf{0}_{1 \times (n-1)}  \\  d_{n-1}   & \end{pmatrix} \circ (g_n \oplus 1_{U_{n-1}} \oplus \cdots \oplus 1_{U_1})$ for $n \geq 3$. This means simply that $d_n$ is the composition of the maps from the ladder
 $$A_n \xrightarrow{g_n \oplus 1_{U_{n-1}} \oplus \cdots \oplus 1_{U_1}} A_{n-1} \xrightarrow{a_n} B_{n-1}=Y_{n-1} \oplus \bigoplus_{k=1}^{n-1}P_k$$
  restricted to $\bigoplus_{k=1}^{n-1}P_k$.   The image of \eqref{Eq:ladderMCM} in $\mc{D}$ is given by deleting the direct summands $P_i$ and the respective maps. The extended ladder (in accordance with the notation of Iyama--Wemyss)  in $\mc{D}$ is \eqref{Eq:ladderextended}. \\

Here  we start the actual proof: in (i) we show that \eqref{diag:ex1} is a right $\tau$-sequence, in (ii) that the squares in the  extended ladder commutes in $\CM(R)$, which implies that the images in $\mc{D}$ commute. Moreover it is shown that  each square is the direct sum of a right $\tau$-sequence and a split exact sequence. In (iii) we show that \eqref{Eq:exactapprox} is an exact sequence and in (iv) that it gives a minimal right $\mc{C}$-approximation of $X \neq 0 \in \mc{D}$ (resp.~a minimal right $\mc{C}$-almost split map of $X  \in \mc{C}$). \\

(i) The sequence \eqref{diag:ex1} is a right $\tau$-sequence: Let $Y_n, P_n, Z_n, U_n$ be defined as in the recursion formula in \eqref{Eq:recursion1} and \eqref{Eq:recursion2}. A right $\tau$-sequence of $Y_{n}$ is given as
$$ 0 \xrightarrow{} \tau Y_{n} \xrightarrow{} \vartheta Y_{n} \xrightarrow{} Y_{n}.$$
Then since $\tau Y_{n}=Z_{n+1}'=Z_{n+1} \oplus U_{n+1}$ and $\vartheta Y_{n}=Z_n \oplus Y_{n+1} \oplus P_{n+1}$ there exist maps in ${\bf J}_{\CM(R)}$ such that the sequence 
\begin{equation} \label{Eq:tausequence} Z_{n+1} \oplus U_{n+1} \xrightarrow{\begin{pmatrix}   - g_{n1} & - g_{n2} \\ b_{n+1} & 0 \\ c_{n+1,1} & c_{n+1,2}  \end{pmatrix}} Z_n \oplus Y_{n+1} \oplus P_{n+1} \xrightarrow{\begin{pmatrix} b_{n} & f_{n,1} & f_{n,2} \end{pmatrix}} Y_n
\end{equation}
is a $\tau$-sequence. \\
(ii) The maps in the ladder:  from the definitions  of $Y_n$ and  $Z_n$, it is clear that each square in the ladder commutes in $\mc{D}$, more concretely, from \eqref{Eq:tausequence} it follows by deleting the $P_i$ and $U_i$ that $b_{n} g_{n1}=f_{n1} b_{n+1}$. 
In the extended ladder in $\CM(R)$ we have chosen the $d_n$ so that the diagram commutes. \\
(iii) In order to see that \eqref{Eq:exactapprox} is exact, we follow the lines of the proof of Thm.~4.5 of \cite{IyamaWemyss10a}. Since we consider the dual situation, all arrows have to be reversed and instead of injective summands $I_i$ one has to consider $P_i \in \CM(R)/[\mc{D}]=\mc{C}$. \\
The construction is similar: let $B_n:=Y_n \oplus P_n \oplus \bigoplus_{k=1}^{n-1}P_k$ and $A_n:= Z_n \oplus U_n \oplus \bigoplus_{k=1}^{n-1}U_k$, so $a_n \simeq \begin{pmatrix} b_n & 0 & \bf{0}_{1 \times (n-1)} \\ c_{n,1} & c_{n,2} &\bf{0}_{1 \times (n-1)}\\ & d_n &  \end{pmatrix}$. \\

{\it Claim:} The sequences  $A_{n+1} \lra B_{n+1} \oplus A_n \lra B_n$ obtained from \eqref{Eq:ladderMCM} are isomorphic  to the direct sum (as complexes) of a right $\tau$-sequence of $Y_n$  and a split exact sequence of the form 
$$ 0 \ra U'_n \xrightarrow{\begin{pmatrix} 1 \\ 0 \end{pmatrix} } U'_n  \oplus P'_n \xrightarrow{(0 1)} P'_n \ra 0,$$
where  $P'_n=\bigoplus_{k=1}^{n} P_k$ and $U'_n=\bigoplus_{k=1}^{n} U_k$ for any $n$. \\

{\it Proof of claim:}  This can be carried out by an explicit calculation, which consists in writing the square of the ladder
$$\xymatrix{
A_{n+1}    \ar[r]_{g_n} \ar[d]_{a_{n+1}} & A_n \ar[d]^{a_n}   &  = &   Z'_{n+1}  \oplus U'_{n}  \ar[r] \ar[d]  &Z_n \oplus U'_n  \ar[d]    \\ 
B_{n+1}  \ar[r]^{f_n} & B_n &  &   Y_{n+1} \oplus P_{n+1} \oplus P'_n   \ar[r]& Y_n \oplus P'_n}$$
in form of an exact sequence and then comparing it to the direct sum of the right $\tau$-sequence for $Y_n$ and the above split exact sequence, which is of the form
\begin{equation} \label{Eq:directsum} 
Z'_{n+1} \oplus U'_n \xrightarrow{\phantom{aaaa}} Y_{n+1} \oplus P_{n+1} \oplus Z_n \oplus U'_n \oplus P'_n \xrightarrow{ \phantom{aaaa}} Y_n \oplus P'_n.
\end{equation}
Now one can construct an explicit isomorphism between the two.

(iv) Using (i)-(iii) and induction, we see that 
$A_n \simeq Z_n \oplus \bigoplus_{j=1}^n U_j$ and $B_n \simeq Y_n \oplus \bigoplus_{j=0}^n P_j$ in $\CM(R)$. In $\mc{D}$, one has $A_n \simeq Z_n \oplus \bigoplus_{j=1}^n M_{\mc{D}}(U_j)$, where $M_{\mc{D}}(U_j)$ denotes the maximal summand of $U_j$ contained in $\mc{D}$, and $B_n \simeq Y_n$. Since all the squares in the ladder \eqref{Eq:ladderMCM} commute and the first square forms an exact sequence $0 \lra Z_1 \lra Y_1 \oplus P_1 \lra X$ (the $\tau$-sequence of $X$),  the sequence 
\begin{equation} \label{Eq:exacttau}
0 \longrightarrow A_n \xrightarrow{a_n} B_n \xrightarrow{f=f_0f_1 \cdots f_{n-1}} X,
\end{equation}
where $f_i=(f_{i1},f_{i2}) \oplus 1_{P_{i-1}} \oplus \cdots \oplus 1_{P_1}$, is exact by a diagram chase. Clearly $f|_{Y_n}$ lies in ${\bf J}^n (Y_n, X) \subseteq {\bf J}^n_{\CM(R)}$. If $R$ is of $\CM$-finite type, it follows from Prop.~\ref{Prop:JacobianNilpotent} that $Y_n=0$ for $n \gg 0$. In this case we have an exact sequence in $\CM(R)$
$$ 0 \xrightarrow{} \bigoplus_{j=1}^n U_j \xrightarrow{a_n} \bigoplus_{j=1}^n P_j \xrightarrow{f} X,$$
with $P_j \in \mc{C}$ and $a_n \in {\bf J}_{\CM(R)}$, which shows that $f$ is minimal, cf.~Lemma \ref{Lem:minimalres}.  \\

(v) From the extended ladder in $\CM(R)$ we also get that \eqref{Eq:exactapprox} has the desired approximation properties. First note that any morphism $\gamma: T \lra N$ for some module $N \in \CM(R)$ and $T \in \mc{C}$ is contained in ${\bf J}(T,N)$ if $N$ is not isomorphic to a (sum of) direct summand(s) of $T$. Thus any morphism $\gamma: T \lra Y_i$, for any $Y_i \neq 0$ as above, $i \geq 1$, in the ladder will always be contained in ${\bf J}_{\CM(R)}$, since $Y_i \not \in \mc{C}$ by construction. \\
Suppose now that $\theta:T \ra X$ is in ${\bf J}(T,X)$.  
We will show inductively that there exists an $h: T \lra Y_{n+1} \oplus P_{n+1} \oplus \cdots \oplus P_1$ such that $\theta=f_0 \cdots f_n h$. It is clear that there exists an $h_1: T \lra Y_1 \oplus P_1$ such that $h=f_0 \circ h_1$ by the $\tau$-sequence property of $f_0$. Suppose we have constructed a $h_n=(h_{n1}, \ldots, h_{n,n+1}): T \ra Y_n \oplus \bigoplus_{i=1}^nP_i$ such that $f_0 \cdots f_{n-1} h_n=\theta$. Here $h_{n1}:T \ra Y_n$ is in ${\bf J}(T,Y_n)$ by the remark in the previous paragraph. Using that the $\tau$-sequence of $Y_n$ is given by \eqref{Eq:tausequence}, we obtain a morphism $\varphi=(\varphi_1, \varphi_2, \varphi_3): T \ra Z_n \oplus Y_{n+1} \oplus P_{n+1}$ such that 
\begin{equation} \label{Eq:tauid}
 b_n \varphi_1 + f_{n1} \varphi_2 + f_{n2} \varphi_3 = h_{n1}.
 \end{equation}
{\it Claim:} $(\varphi_2, \varphi_3, h_{n2}, \ldots, h_{n,n+1})$ can be taken as $h$. \\

To see this, compute 
$$f_0 \cdots f_n (\varphi_2, \varphi_3, h_{n2}, \ldots, h_{n,n+1})= f_0 \cdots f_{n-1}(f_{n1}\varphi_2 + f_{n2} \varphi_3, h_{n2}, \cdots, h_{n,n+1}).$$ 
Plugging in \eqref{Eq:tauid} yields $f_0 \cdots f_{n-1}(h_{n1} - b_n \varphi_1, h_{n2}, \cdots ,h_{n,n+1})$, which is equal to 
$$f_0 \cdots f_{n-1}(h_{n1}, h_{n2}, \ldots, h_{n,n+1}) - f_0 \cdots f_{n-1}(b_n\varphi_1, 0, \ldots 0).$$
 Moreover one has $(b_n\varphi_1, 0, \ldots, 0)=a_n(\varphi_1,0, \ldots, 0)$. Using that \eqref{Eq:exacttau} is exact in the middle, 
one finds that $f_0 \cdots f_{n-1}a_n(\varphi_1, 0, \ldots 0)=0$ and thus by induction hypothesis $h=(\varphi_2, \varphi_3, h_{n2}, \ldots, h_{n,n+1})$ satisfies $f_0 \cdots f_n h=f_0 \cdots f_{n-1} h_n=\theta$. 
\end{proof}

\begin{Bem}
Note that for this proof $M$ does not have to be a generator, that is, we do not require $R \in \add(M)$. The only thing needed is that the beginning of the ladder is a right $\tau$-sequence. If $X \neq R$ and $R \in \add(M)$, then \eqref{Eq:exacttau} will be exact on the right and all squares in the ladder are pullback/pushout squares. 
\end{Bem}

%%%%%%%%%%%
\section{Examples} \label{Sec:examples}

First we illustrate the ladder method for two endomorphism rings over the $E_6$-curve. In Section \ref{Sub:Leuschke} we compute the global dimension of Leuschke's NCRs \cite{Leuschke07}, as announced in \cite{DFI}. In \ref{Sub:An} we prove the formula for the global spectra of $A_n$-curves. In \ref{Sub:globalspecsimple} the results of the computation of global spectra for the ADE curve singularities are given together with a short description of our SAGE program. \\
Throughout this section we use Yoshino's \cite{Yoshino90} notation for the indecomposable $\CM$-modules over ADE curve singularities and refer to loc.~cit.~for the corresponding AR-quivers.

\begin{ex} \label{Ex:LeuschkeE6}
(The $E_6$-curve, Leuschke's NCR): Let $R=k[[x,y]]/(x^3+y^4)$. Let $M=R\oplus M_1 \oplus B$ and let $\mc{D}=\CM(R)/[\add(M)]$ and $\mc{C}=\add(M)$. Note that $\gl \End_R M$ has been computed with other methods in \cite[Example 5.12]{DFI}. In order to determine the global dimension of $A=\End_R M$ with ladders, let us compute the $\mc{C}$-almost split maps of the direct summands of $M$ and consequently the $\mc{C}$-resolutions of the appearing kernels: \\

(i) $R$: The fundamental sequence is 
$$ 0 \lra N_1 \cong \mf{m} \lra R,$$
and $R$ is the projective cover of the simple $k$. The $\mc{C}$-approximation of $N_1$ is given by the ladder:
\begin{equation} \label{Eq:ladder}
\begin{array}{cccccccccccccc}
B &\xrightarrow{} & X &\xrightarrow{} & B \oplus M_2 & \xrightarrow{}& X &\xrightarrow{}&B&\xrightarrow{}&M_1&\xrightarrow{}&0\\
\downarrow^{} & &\downarrow^{} &  &\downarrow^{} & &\downarrow^{}&&\downarrow^{}&&\downarrow^{}&&\downarrow^{}&\\
 M_1\oplus M_1 &\xrightarrow{} & A \oplus M_1 &\xrightarrow{}  &X \oplus M_1   &\xrightarrow{}& A \oplus M_2 &\xrightarrow{}&X&\xrightarrow{}&A&\xrightarrow{}&N_1.
\end{array}
\end{equation}
In terms of the recursion formula this reads as $Y_0=N_1$, $U_0=P_0=Z_0=0$. Then $Y_1=A$, $P_1=U_1=0$, $Z_1=M_1$. Further $Y_2=X$, $P_2=0$, $Z_2=B$ and $U_2=0$. Then $Y_3=A \oplus M_2$, $Z_3=X$ and $U_3=P_3=0$. In the fourth step: $Y_4=X$, $P_4=M_1$, $Z_4=B \oplus M_2$ and $U_4=0$. For $Y_5=A$, $P_5=0$, $Z_5= X$ and $U_5=0$, and the ladder finally terminates with $Y_6=0$, $P_6=M_1$, $Z_6=B$ and $U_6=0$. The right $\mc{C}$-almost split map of $R$ is thus
$$0 \lra B \lra M_1 \oplus M_1 \lra R,$$
which shows that $\pd S_R= \pd_{\End_R M}(\Hom_R(M,R)/{\bf J}(M,R))=2$. \\
(ii) $M_1$: the AR-sequence of $M_1$ is a $\mc{C}$-almost split map with kernel $N_1$. Since we have already computed the $\mc{C}$-approximation of $N_1$ in (i), the sequence
$$0 \lra B \lra M_1 \oplus M_1 \lra R\oplus B \lra M_1$$
yields that $\pd S_{M_1}=3$. \\
(iii) $B$: the extended ladder of $B$ looks as follows: 
\begin{equation} \label{Eq:ladder}
\begin{array}{cccccccccccc}
B &\xrightarrow{} & X & \xrightarrow{}& B \oplus M_2 &\xrightarrow{}&M_1 \oplus X &\xrightarrow{}& A &\xrightarrow{}&0\\
\downarrow^{} &  &\downarrow^{} & &\downarrow^{}&&\downarrow^{}&&\downarrow^{}&&\downarrow^{}&\\
 M_1 \oplus B &\xrightarrow{}  & A \oplus B   &\xrightarrow{}& X \oplus B &\xrightarrow{}& A \oplus M_2 \oplus B &\xrightarrow{}&N_1 \oplus X &\xrightarrow{}&B.
\end{array}
\end{equation}
%$Y_1=N_1 \oplus X$, $U_1=P_1=0$ and $Z_1=A$. Second step: $Y_2=A \oplus M_2$, $P_2=B$, $U_2=0$ and $Z_2=M_1 \oplus X$. Third step: $Y_3=X$, $P_3=U_3=0$ and $Z_3=B \oplus M_2$. Fourth step $Y_4=A$, $P_4=U_4=0$ and $Z_4=X$. Finally $Y_5=0$, $P_5=M_1$, $U_5=0$ and $Z_5=B$. 
The minimal $\mc{C}$-right almost split map of $B$ is thus
$$0 \lra B \lra B \oplus M_1 \lra B.$$
So $\pd S_B=2$. In total, $\gl \End_R M=3$.
\end{ex}

\begin{ex}
($E_6$ -- infinite global dimension) Let again $R=k[[x,y]]/(x^3+y^4)$. This time let $M=R\oplus B \oplus X$ and $\mc{C}=\add(M)$.  Using ladders, one sees that $\pd_A S_R=2$ and $\pd_AS_B=4$. However, $S_X$ is of infinite projective dimension:
%(i) $R$: The fundamental sequence is 
%$$ 0 \lra N_1 \cong \mf{m} \lra R.$$
%%and $\Hom_R(M,R)$ is the projective cover of the simple $S_R$. 
%Next a $\mc{C}$-resolution of $N_1$ has to be computed. Using the recursion formula for ladders, one obtains the terms: $Y_0=N_1$, $Z_0=P_0=U_0=P_1=0$, $Y_1=A$, $U_1=0$ and $Z_1=M_1$. In the following step one gets $Y_2=0$, $P_2=X$, $U_2=B$ and $Z_2=0$. Thus the $\mc{C}$-approximation of $N_1$ is:
%$$ 0 \lra B \lra X \lra N_1 \lra 0.$$
%Since $B \in \mc{C}$, we are finished and applying $\Hom_R(M, -)$ to the exact sequence
%$$ 0 \lra B \lra X  \lra R$$
%yields a minimal projective resolution of $S_R$ in $\End_RM$. Thus $\pd_A S_R=2$. \\
%(ii) $B$: the AR-sequence for $B$ encodes in: $Y_0=B$ and again $U_0=P_0=Z_0=0$. Further $Y_1=N_1$, $P_1=X$, $Z_1=A$ and $U_1=0$. Again in the second step the $\mc{C}$-almost split map is already computed: $Y_2=0$, $P_2=0$, $U_2=M_1$ and $Z_2=0$, giving
%$$ 0 \lra M_1 \lra X \lra B \lra 0.$$
%Then a $\mc{C}$-approximation of $M_1$ has to be computed. The terms here are as follows: $Y_0=0$, $P_0=U_0=Z_0=0$ and  $Y_1=0$, $P_1=B \oplus R$, $Z_1=0$ and $U_1=N_1$. Since we already computed the $\mc{C}$-approximation of $N_1$, one obtains the minimal resolution of $S_B$ by applying $\Hom_R(M,-)$ to the following exact sequence
%$$ 0 \lra B \lra X \lra  B \oplus R \lra X \lra B \lra 0.$$
%The corresponding simple $S_B$ of $A$ has thus $\pd_AS_B=4$. \\
again with ladders, one can show that the $\mc{C}$-almost split map of $X$ is 
%$Y_0=X$, $U_0=P_0=Z_0=0$. Then $Y_1=M_2 \oplus A$, $P_1=B$, $Z_1=X$ and $U_1=0$. In the second step we get $Y_2=M_1$, $P_2=X$, $U_2=M_2$ and $Z_2=B$. In the third step the ladder terminates: $Y_3=0$, $P_3=R$, $U_3=N_1$ and $Z_3=0$. Thus 
$$ 0 \lra N_1 \oplus M_2 \lra B \oplus X \oplus R \lra  X.$$
 The $\mc{C}$-approximation of $N_1$ is
$$ 0 \lra B \lra X \lra N_1 \lra 0,$$
 and for $M_2$ it is just the AR sequence
$$ 0 \lra M_2 \lra X \lra M_2 \lra 0.$$
%Here the ladder yields $Y_0=M_2$, $U_0=P_0=Z_0=0$. And step one already finishes: $Y_1=0$, $P_1=X$, $U_1=M_2$ and $Z_1=0$. 
Hence the minimal $\mc{C}$-approximation of $N_1 \oplus M_2$ is
$$ 0 \lra B \oplus M_2 \lra X \oplus X \lra N_1 \oplus M_2 \lra 0.$$
But here we see that the $\mc{C}$-resolution will be periodic: $B$ is contained in $\mc{C}$, so it doesn't have to approximated further, but the kernel of the $\mc{C}$-approximation of $M_2$ will be again $M_2$. Thus $\pd_A S_X = \infty$ and $A$ is not of finite global dimension.
\end{ex}
%%%%%%%

\subsection{Leuschke's endomorphism rings for ADE curve singularities} \label{Sub:Leuschke}

\begin{Thm}
Let $R$ be a 1-dimensional ADE curve singularity, that is, a reduced complete local ring of type $A_n$ with $n \geq 1$, $D_n$ with $n \geq 4$, $E_6, E_7$ or $E_8$. Consider the family of rings $\{R^{(j_i)}_{i}\}$, where $R_0 = R$, $R_1 = R^{(1)}_1 \oplus \cdots \oplus R^{(n_1)}_1 := \End_{R} \mf{m}$, and the $R^{(j_i)}_i$ are the direct factors of all $\End_{R^{(l)}_{i-1}}\mf{m}_{R^{(l)}_{i-1}}$. Consider the longest chain $R_0 \subseteq R^{(j_1)}_1 \subseteq \cdots R^{(j_m)}_m = \widetilde R$ and set $A=\End_R \bigoplus_{i=0}^m R^{(j_i)}_i$.  Then 
$$\gl A= \begin{cases} 2 & \text{ if $R$ is of type $A_n$,} \\
    3&  \text{ else.}
    \end{cases}
    $$
\end{Thm}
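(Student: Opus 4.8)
The plan is to compute $\gl A$ through the simple $A$-modules, using Construction~\ref{Const:projres} and the ladder Theorem~\ref{Thm:ladder}. First I would record that an ADE curve singularity $R$ is a reduced complete local hypersurface, hence Gorenstein (so $\omega_R\cong R$), an isolated singularity of Krull dimension $1\le 2$, and of finite $\CM$-type, and that each $R_i^{(j_i)}$, being an overring of $R$ inside the normalisation $\widetilde R\subseteq Q(R)$, is a finitely generated torsion-free, hence $\CM$, $R$-module. Thus $M:=\bigoplus_{i=0}^m R_i^{(j_i)}$ is a $\CM$-module having $R$ as a direct summand, $\mc{C}:=\add(M)$ is contravariantly finite in $\CM(R)$ (Lemma~\ref{Lem:addMcontra}), and we are in the Henselian setting of Section~\ref{Sec:tauladders}. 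By Lemma~\ref{Lem:MoritaReduced} I may assume $A$ basic, so by Section~\ref{sub:structureEnd} the simple $A$-modules are the $S_Z=\Hom_R(M,Z)/{\bf J}_R(M,Z)$, one for each indecomposable summand $Z$ of $M$, and $\gl A=\max_Z\pd_A S_Z$. For each such $Z$ I would build the minimal projective $A$-resolution of $S_Z$ by Construction~\ref{Const:projres}: its first step (Lemma~\ref{Lem:resbegin}) is the minimal right $\mc{C}$-almost split map of $Z$, obtained from the ladder of Theorem~\ref{Thm:ladder} applied to the AR-sequence of $Z$ if $Z\neq R$ and to the fundamental sequence $0\to\mathfrak m\to R$ (recall $\tau R=0$ when $\dim R=1$, Example~\ref{Ex:MCMtau}) if $Z=R$; the later steps splice in the minimal $\mc{C}$-approximation, again via Theorem~\ref{Thm:ladder}, of the non-$\mc{C}$ part of the preceding kernel.

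The body of the argument will be a case analysis on the type, run on the explicit data of \cite{Yoshino90} --- the indecomposable $\CM(R)$-modules and the AR-quiver of each ADE curve --- together with the explicit shape of the chain $R=R_0\subseteq R_1^{(j_1)}\subseteq\cdots\subseteq R_m^{(j_m)}=\widetilde R$ (cf.\ \cite{Leuschke07,LeuschkeWiegand}). For each type I would first identify the indecomposable summands of $M$ as a distinguished set of vertices of the AR-quiver: for $A_n$ (AR-quiver of type $\mathbb A$) these turn out to be ``consecutive'' modules and the chain is short; for $D_n$ they form a family described uniformly in $n$; for $E_6,E_7,E_8$ there are three explicit finite configurations, the $E_6$ case being exactly Example~\ref{Ex:LeuschkeE6}, where $\gl A=3$ is already worked out. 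Then, for every indecomposable summand $Z$, I would run the recursion \eqref{Eq:recursion1}--\eqref{Eq:recursion2}; finite $\CM$-type forces each ladder to terminate (Proposition~\ref{Prop:JacobianNilpotent}), so \eqref{Eq:exactapprox} delivers the minimal right $\mc{C}$-almost split map of $Z$ and iterating Construction~\ref{Const:projres} delivers the resolution of $S_Z$. The two things to extract are: first, that the iterated $\mc{C}$-approximation towers all reach $0$ --- equivalently $\gl A<\infty$, which is the crucial point --- so that, unlike in the second $E_6$ example, no periodic tail appears; and second, that the resulting lengths are $\le 2$ in type $A_n$ and $\le 3$ in types $D_n,E_{6,7,8}$, with equality attained in each case (for instance already at $S_R$, whose $\mathfrak m$ needs two successive $\mc{C}$-approximations in the non-$A$ types, or at a module neighbouring the branch vertex of the AR-quiver). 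Taking the maximum over $Z$ then gives $\gl A=2$ for $A_n$ and $\gl A=3$ otherwise.

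I expect the genuine obstacle to be the uniformity in the $D_n$ family. For the finitely many configurations --- $A_n$ and $D_n$ with small $n$, and $E_6,E_7,E_8$ --- the proof is a bounded, if tedious, tally of ladders, which can be cross-checked against the output of the SAGE program. To handle $D_n$ for all $n$ simultaneously, however, I would need to show that passing from $D_n$ to $D_{n+1}$ alters the relevant ladders only in a controlled, length-preserving way, so that the termination length of each ladder, and hence $\max_Z\pd_A S_Z$, stays equal to $3$ rather than growing with $n$. This comes down to an inductive analysis of how the last ring adjoined to the chain --- equivalently, the extra arm of the AR-quiver of $D_n$ --- interacts with the $\mc{C}$-approximations, and is the single place where a structural argument rather than a direct computation is called for.
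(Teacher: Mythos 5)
Your proposal is correct and follows essentially the same route as the paper: a type-by-type computation of minimal $\mc{C}$-approximations via the ladder of Theorem~\ref{Thm:ladder}, spliced through Construction~\ref{Const:projres} to read off $\pd_A S_Z$ for each indecomposable summand $Z$ of $M$. The one place you flag as needing a structural argument (uniformity in $n$ for the $D_n$ family) the paper resolves simply by exhibiting the $\mc{C}$-approximations in closed form uniformly in $n$ (e.g.\ $0\to M_i\to M_{i-1}\oplus M_{i+1}\to M_i\to 0$ for $1<i<\tfrac{n-3}{2}$), which is exactly the ``controlled, length-preserving'' behaviour you anticipate.
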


\begin{proof} The proof consists of a case by case analysis: 
\begin{enumerate}[leftmargin=*]
\item $R\cong k[[x,y]]/(x^{n+1} + y^2)$, the $A_n$-case: see Section \ref{Sub:An}. 
\item{ $D_n$: start with the odd $n$: in \cite{DFI} it was computed that Leuschke's chain is given by $M=R\oplus X_1 \oplus A \oplus M_1 \oplus \cdots \oplus M_{\frac{n-3}{2}}$. In order to compute $\gl \End_RM$, we compute the minimal projective resolutions of the simples as above. Set again $\mc{C}:=\add(M)$.
The following exact sequences yield minimal $\End_R M$-projective resolutions of the corresponding simples: \\
(i) $R$: %The fundamental sequence is
%$$ 0 \lra Y_1 \lra R.$$
%Using ladders, one computes that the $\mc{C}$-resolution of $\mf{m} \cong Y_1$ is 
%$$0 \lra M_1 \oplus A \lra X_1 \oplus X_1 \lra Y_1 \lra 0$$
 %and so from the exact sequence  
 $$0 \lra  M_1 \oplus A \lra X_1 \oplus X_1 \lra R \lra k \lra 0.$$ 
 %one obtains $\pd_{\End_RM} S_R=2$. \\
 (ii) $X_1$: %the $\mc{C}$-almost split map of $X_1$ is
 %$$ 0 \lra Y_1 \lra M_1 \oplus R \oplus A \lra X_1 \lra 0.$$
 %In (i) we have computed the $\mc{C}$-approximation of $Y_1$, thus
 $$ 0 \lra M_1 \oplus A \lra X_1 \oplus X_1 \lra M_1 \oplus R \oplus A \lra X_1 \lra 0.$$
% yields a minimal projective resolution of $S_{X_1}$ over $\End_RM$. Hence $\pd_{\End_RM}S_{X_1}=3$. \\
 (iii) $A$: %here the $\mc{C}$-almost split map of $A$ is
 $$0 \lra M_1 \lra X_1 \lra A \lra 0.$$
% which yields a minimal projective $\End_R M$-resolution of $S_A$. 
%Thus $\pd S_A=2$. \\
 (iv) $M_i$: here one has to look at several cases: computing the ladder for $M_1$ yields $\mc{C}$-almost split map
 $$ 0 \lra A \oplus M_1 \lra X_1 \oplus M_2 \lra M_1 \lra 0.$$
 Note here that for $n=5$ the approximation is slightly different: $0 \lra A \oplus M_1 \lra M_1 \oplus X_1 \lra M_1 \lra 0$.
 For $1 < i < \frac{n-3}{2}$ one gets
 $$ 0 \lra M_i \lra M_{i-1} \oplus M_{i+1} \lra M_i \lra 0,$$
 and for $M_{\frac{n-3}{2}}$ one obtains
 $$0 \lra M_{\frac{n-3}{2}} \lra M_{\frac{n-3}{2}} \oplus M_{\frac{n-5}{2}} \lra M_{\frac{n-3}{2}} \lra 0.$$
 In all cases $\pd_{\End_R M} S_{M_i}=2$. \\
 Thus the maximum of the projective dimension of the simples is $3$ and hence $\gl \End_R M=3$. \\
 
 The even $D_n$'s are similar:  here $M=R \oplus X_1 \oplus M_1 \oplus \cdots \oplus M_{\frac{n-4}{2}} \oplus D_{-} \oplus D_+$ and the $\mc{C}$-approximations are either the same as for the $n$ odd case or as listed below for $n > 4$:\\ 
 (i) $M_{\frac{n-4}{2}}$: here a ladder yields the $\mc{C}$-approximation
 $$ 0 \lra M_{\frac{n-4}{2}} \lra M_{\frac{n-6}{2}} \oplus D_+ \oplus D_{-} \lra M_{\frac{n-4}{2}} \lra 0.$$
 (ii) $D_{+}$: here the approximation is
 $$0 \lra D_{-} \lra M_{\frac{n-4}{2}} \lra D_+ \lra 0.$$
 (iii) $D_{-}$ is similar:
 $$0 \lra D_{+} \lra M_{\frac{n-4}{2}} \lra D_{-} \lra 0.$$
 Again one sees that $\gl \End_R M =3$. \\
 For $n=4$, $M=R \oplus X_1 \oplus A \oplus D_+ \oplus D_{-}$ and the approximations slightly differ but the global dimension of $\End_R M$ is again $3$. One has the following $\mc{C}$-approximations: $0 \lra D_+ \oplus D_{-} \lra X_1^2 \lra R \lra k \lra 0$, $0 \lra D_+ \oplus D_{-} \lra X_1^2 \lra A \oplus D_+ \oplus D_{-} \lra X_1 \lra 0$, $0 \lra D_+ \oplus D_{-} \lra X_1 \lra A \lra 0$, $0 \lra D_+ \oplus A \lra X_1 \lra D_+ \lra 0$ and $0 \lra D_- \oplus A \lra X_1 \lra D_- \lra 0$. 
 }

\item $E_6$: see Example \ref{Ex:LeuschkeE6}. 

\item{ $E_7$: here $M=R \oplus M_1 \oplus Y_1 \oplus A \oplus D$. The following exact sequences yield minimal $\End_R M$-projective resolutions of the corresponding simples: 
\begin{equation*}
\begin{array}{ccccccccccc}
  0 &\lra &Y_1 &\lra &M_1^2 &\lra &R& \lra& k &\lra& 0. \\
 0&  \lra & Y_1& \lra & M_1^2& \lra &Y_1 \oplus R& \lra &M_1& \lra & 0. \\
0 &\lra & Y_1 & \lra & D \oplus A & \lra & M_1 & \lra & Y_1& \lra  & 0. \\
0 & \lra & Y_1 & \lra & M_1^3 & \lra & Y_1& \lra & A&  \lra & 0.\\
& & 0 & \lra &A  & \lra &  Y_1 & \lra  & D &  \lra & 0.
\end{array}
\end{equation*}
Again $\gl \End_R M = 3$. 
}

\item{ $E_8$: here $M=R \oplus M_1 \oplus A_1 \oplus A_2$ and the approximations of the direct summands are as follows: 
\begin{equation*}
\begin{array}{ccccccccccc}
 0 &\lra & A_1 &\lra &M_1^2 &\lra& R& \lra & k &\lra&  0.\\
0 &\lra  &A_1 &\lra &M_1^2 &\lra &A_1 &\lra & M_1 & \lra &0. \\
& & 0 &\lra &A_1 &\lra &A_2 \oplus M_1 &\lra&  A_1&\lra & 0. \\
& &0 & \lra &A_2 &\lra & A_1 \oplus A_2 &\lra&   A_2 &\lra& 0.
\end{array}
\end{equation*}
} 
Again $\gl \End_R M = 3$.
\end{enumerate}
\end{proof}

\subsection{Global spectrum of $A_{n}$-curves} \label{Sub:An}

In \cite{DaoFaber} the global spectrum of $A_n$-curve singularities is computed by a general fact about triangulated categories. Here we give a computational proof using ladders:

\begin{Thm} \label{Thm:specAn}
(1) $n$ odd: Let $R=k[[x,y]]/(y^2+x^n)$, where $n=2k+1$. Then the global spectrum $\gs_{\CM(R)}(R)=\{1,2\}$. In particular the endomorphism rings of finite global dimension are of the form $A_{i_0}=\End_R(\bigoplus_{i=i_0}^k I_i)$, where $I_i=(y,x^i)$ are the indecomposable $\CM(R)$-modules (note: $I_0=R$!) and $i_0 \geq 0$. Here 
$$\gl A_{i_0}=\begin{cases} 1 \ \text{ if } i_0=k \\
                                             2 \ \text{ else.}
                                             \end{cases}
                                             $$
                                             
 (2) $n$ even: Let $R=k[[x,y]]/(y^2+x^n)$, where $n=2k$. Then the global spectrum $\gs_{\CM(R)}(R)$ is $\{1,2,3\}$. The indecomposables in $\CM(R)$ are the $I_i$, $i=0, \ldots, k$ and the two smooth irreducible components $D_+$ and $D_-$. There are three types of endomorphism rings $A=\End_R M$ of finite global dimension: \\
 (i) $\add(M) \subseteq \add(D_-, D_+)$, then $\gl A=1$. \\
 (ii) $M=\bigoplus_{i=i_0}^k I_i \oplus D_+ \oplus D_-$, with $0 \leq i_0 \leq k$ then $\gl A=2$. \\
 (iii) $M= \bigoplus_{i=i_0}^{i_1} I_i \oplus D_+$, with $i_0 \leq i_1 \leq k$  (or symmetrically with $D_-$ instead of $D_+$), then $\gl A=3$.                      
\end{Thm}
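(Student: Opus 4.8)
The plan is to feed the Auslander--Reiten quiver of $R$ into the ladder algorithm of Theorem~\ref{Thm:ladder} and Construction~\ref{Const:projres}, using that $\gl\End_R M=\max_Z\pd_{\End_R M}S_Z$ over the indecomposable summands $Z$ of $M$ (Section~\ref{sub:structureEnd}), and that each $\pd_{\End_R M}S_Z$ is the length of the minimal $\mc{C}$-resolution of $S_Z$ built by iterating minimal right $\mc{C}$-approximations starting from the right $\mc{C}$-almost split map of $Z$, where $\mc{C}=\add(M)$. First I would record from Yoshino~\cite{Yoshino90} the indecomposable $\CM(R)$-modules and the AR-quiver: for $n=2k+1$ it is the line $R=I_0-I_1-\cdots-I_k$ with a loop at $I_k$, so $\tau R=0$, $\vartheta R=I_1$; $\tau I_j=I_j$, $\vartheta I_j=I_{j-1}\oplus I_{j+1}$ for $1\le j<k$; and $\tau I_k=I_k$, $\vartheta I_k=I_{k-1}\oplus I_k$; for $n=2k$ it is a line of $I$-modules ending at a terminal vertex $I_\ell$ that forks into the two smooth branches $D_+,D_-$, with $\tau D_\pm=D_\mp$, $\vartheta D_\pm=I_\ell$ and $\vartheta I_\ell=I_{\ell-1}\oplus D_+\oplus D_-$ (with $I_{-1}:=0$). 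Since $R$ is a hypersurface, $\Omega$ is periodic on $\underline{\CM}(R)$, and conjugating matrix factorisations gives $\Omega I_j\cong I_j$ in the odd case and $\Omega D_\pm\cong D_\mp$ in the even case. Finally, in this setting the AR- and fundamental sequences give $[\vartheta Y]-[\tau Y]\ge 0$ in $\mc{K}_0(\CM(R))$ for all $Y$, so every $U$-term in Theorem~\ref{Thm:ladder} vanishes and the ladder collapses to the recursion $Z_n=\tau Y_{n-1}$, $Y_{n+1}=(\vartheta Y_n-\tau Y_{n-1})_+-M_\mc{C}\big((\vartheta Y_n-\tau Y_{n-1})_+\big)$, with right $\mc{C}$-approximation $0\to Z_n\to\bigoplus_iP_i\to Z$ as soon as $Y_n=0$.

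The first half of the proof checks that each $M$ in the list attains the claimed global dimension; these ladders terminate quickly because $\mc{C}$ is a terminal segment of the quiver. For $\mc{C}=\add(\bigoplus_{i=i_0}^kI_i)$: an interior $I_j$ ($i_0<j<k$) has AR-sequence $0\to I_j\to I_{j-1}\oplus I_{j+1}\to I_j\to 0$ already inside $\mc{C}$ with kernel $I_j\in\mc{C}$, so $\pd S_{I_j}=2$; for $I_k$ one gets $\pd S_{I_k}=2$ from $0\to I_k\to I_{k-1}\oplus I_k\to I_k\to 0$ when $i_0<k$, whereas for $i_0=k$ the ladder of $I_k$ walks down the whole quiver to $R$ (where $\tau R=0$ halts it) but all intermediate $P$-terms vanish, so the net right $\mc{C}$-almost split map is $I_k\to I_k$ (a radical, hence injective, map) and $\pd S_{I_k}=1$; and the bottom summand $I_{i_0}$ reduces in the same way to $I_{i_0+1}\hookrightarrow I_{i_0}$ with zero kernel, giving $\pd S_{I_{i_0}}=1$. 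Hence $\gl A_{i_0}\in\{1,2\}$, with $2$ attained exactly when $i_0<k$. In the even case, type~(i) is immediate since $\End_R M$ is then a finite product of discrete valuation rings; type~(ii) is computed exactly as above, all $\pd S\le 2$ with $2$ attained; and in type~(iii) the ladder of $S_{D_+}$ is one step longer because the copies of $D_-$ it produces are not in $\mc{C}$ and must be approximated, but since $\Omega D_-\cong D_+\in\mc{C}$ it closes up after three steps, yielding a length-three minimal resolution of $S_{D_+}$ while every other simple resolves in length $\le 2$; thus $\gl A=3$.

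The second half is the converse: if $\add(M)=\add(\bigoplus_{Z\in S}Z)$ for a set $S$ of indecomposables not of one of the listed shapes, some simple has infinite projective dimension, and the obstruction is always a periodic $\mc{C}$-resolution coming from the loop (odd case) or the fork (even case). If $n=2k+1$ and $I_k\notin S$, take $Z=I_m$ with $m=\max S$: the ladder of $S_{I_m}$ propagates up the quiver until it reaches $I_k$, and since $\vartheta I_k$ contains $I_k$ with $\Omega I_k\cong I_k$, the iterated $\mc{C}$-approximations there never lose their $I_k$-summand, so $\pd S_{I_m}=\infty$. If $I_k\in S$ but $S$ omits some index lying between two of its elements, the ladder of the simple attached to the largest element of $S$ below that gap is forced to approximate the missing modules, and the same periodicity (the $\mc{C}$-syzygy never becomes $0$, as $\Omega$ is periodic and the missing modules recur) gives infinite projective dimension. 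The even case is parallel: if $S$ contains some $I$-vertex below the fork but not \emph{both} $D_+$ and $D_-$, then $\Omega D_+\cong D_-$ creates a period-two loop $D_+\rightsquigarrow D_-\rightsquigarrow D_+$ in the $\mc{C}$-resolution of the relevant simple; gaps among the $I$'s are excluded exactly as before. The surviving configurations for $S$ are then precisely those in (i)--(iii).

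The main difficulty is this second half, and specifically making the periodicity argument uniform over \emph{all} subsets $S$: one must control the asymptotics of the ladder recursion, i.e.\ show that in every bad configuration the sequence of $\mc{C}$-syzygy classes in $\mc{K}_0(\mc{D})$, $\mc{D}=\CM(R)/[\add(M)]$, enters a non-trivial periodic orbit rather than reaching $0$. The cleanest implementation is the one sketched --- exhibit an explicit non-nilpotent cyclic composition of radical maps in ${\bf J}_\mc{D}$ (once around the loop at $I_k$, or the $D_+\leftrightarrow D_-$ swap, or a bounce back and forth across a gap), which by the contrapositive of Proposition~\ref{Prop:JacobianNilpotent} precludes termination of the ladder and forces $\gl\End_R M=\infty$ --- but keeping track of the $P$-terms across the various endpoint and gap cases is where the care is needed. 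The first half, by contrast, is a finite and entirely mechanical verification.
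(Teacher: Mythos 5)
Your overall strategy coincides with the paper's: reduce to the AR-quiver combinatorics, feed them into the ladder algorithm of Theorem~\ref{Thm:ladder} and Construction~\ref{Const:projres}, verify the listed modules attain the claimed global dimensions, and for everything else produce a periodic $\mc{C}$-resolution forced by the loop at $I_k$ (odd case) or the fork $D_\pm$ (even case). The paper's proof of (1) is exactly this, with the two converse cases (a gap among the indices; the sequence failing to reach $I_k$) handled by writing down the explicit ladders and observing that the kernels recur, and the positive case dispatched via the Iyama--Leuschke theorem. Your plan is in the same spirit and, if carried out, would cover the same cases.

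However, two specific steps are wrong as stated and need to be repaired. First, the claim that $[\vartheta Y]-[\tau Y]\ge 0$ in $\mc{K}_0(\CM(R))$, and hence that ``every $U$-term in Theorem~\ref{Thm:ladder} vanishes,'' is false. Already for an interior vertex $I_j$ one has $[\vartheta I_j]-[\tau I_j]=[I_{j-1}]+[I_{j+1}]-[I_j]$, which is not effective, and in the very ladder the paper computes for $M=\bigoplus_{i=0}^lI_i\oplus\bigoplus_{j=m}^{l'}I_j$ the last step produces $U_{m-l}=I_{m-1}\neq 0$ --- indeed this $U$-term is precisely the kernel of the resulting right $\mc{C}$-almost split map. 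So you cannot collapse the recursion to $Z_n=\tau Y_{n-1}$; you must track the $U$-terms.

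Second, the invocation of the contrapositive of Proposition~\ref{Prop:JacobianNilpotent} to ``preclude termination of the ladder'' cannot work. For an ADE curve, $R$ is of finite $\CM$-type, so $\mc{D}=\CM(R)/[\add(M)]$ has finitely many indecomposables and the proposition applies directly: ${\bf J}_\mc{D}$ \emph{is} nilpotent, and Theorem~\ref{Thm:ladder} already guarantees that every single ladder terminates and produces a genuine minimal right $\mc{C}$-approximation. What fails to terminate when $\gl\End_RM=\infty$ is not the ladder but the \emph{iterated} $\mc{C}$-resolution from Construction~\ref{Const:projres}: the sequence of kernels $K_0,K_1,\ldots$ (equivalently the $\Omega_{\add(M)}$-syzygies) enters a nontrivial cycle and never reaches an object of $\mc{C}$. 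That is a statement about the successive approximations, not about nilpotence of the radical of $\mc{D}$. The correct implementation --- which is exactly what the paper does --- is to compute the $\add(M)$-approximation of the kernel once, observe that the new kernel is the same module up to isomorphism (e.g.\ the approximation of $I_{l'+1}$ has kernel $I_{l'+1}$ again), and conclude the minimal projective resolution of the corresponding simple is periodic, hence infinite.
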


\begin{proof}
(1) Note that we can always assume that $M$ is a generator, that is, $R \in \add(M)$. This follows from the observation that $\End_R(I_1)=\End_R(\mf{m} )\cong k\{x,y\}/(y^2+x^{n-2})$ and that all $I_i$ with $i \geq 1$ are modules over $\End_R(\mf{m})$. So one can just substitute $R$ with $\End_R(\mf{m})$. Inductively it follows that if $i_0$ is that smallest index such that $I_{i_0}$ is a direct summand of $M$, then one may consider $M$ as an $I_{i_0} \cong \End_R(I_{i_0}) \cong k\{x,y\}/(y^2+x^{n-2i_0})$-module such that $M$ is then a generator. \\
Suppose now that $M=\bigoplus_{i=0}^l I_i \oplus \bigoplus_{j=m}^{l'}I_j$ for some $0 \leq l < m-1 \leq k-1$. We compute the projective resolution of the simple for $I_l$: the AR-sequence for $I_l$ is
\begin{equation}
0 \lra I_l \lra I_{l-1} \oplus I_{l+1} \lra I_l \lra 0.
\end{equation}
So for the ladder we get $Y_0=I_l$, $Y_1=I_{l+2}$, $P_1=I_{l-1}$, $Z_1=I_l$, $U_1=0$. The next terms are $Y_2=I_{l+2}$, $P_2=0$, $Z_2=I_{l+1}$, $U_2=0$. Iterating this process, until we reach $m$ gives: $Y_{m-l}=0$, $P_{m-l}=I_{m}$, $Z_{m-l}=0$ and $U_{m-l}=I_{m-1}$.  So a right almost $\add(M)$-split sequence of $I_l$ is given as:
$$ 0 \lra I_{m-1} \lra I_l \oplus I_m \lra I_l \lra 0.$$
Now construct an $\add(M)$-approximation of $I_{m-1}$:
In the ladder we have $Y_0=I_{m-1}, Y_1=I_{m-2}, P_1=I_m$ and $Z_1=I_{m-1}$, $U_1=0$. One easily sees the structure of the ladder from the next step: $Y_2=I_{m-3}, P_2=0$ and $Z_2=I_{m-2}$, $U_2=0$. Thus we get $Y_{m-l-1}=0$, $P_{m-l-1}=I_{l}$ and $U_{m-l-1}=I_{l+1}$ and $Z_{m-l-1}=0$. Hence
$$ 0 \lra I_{l+1} \lra I_l \oplus I_m \lra I_{m-1} \lra 0$$
is the desired $\add(M)$-approximation. Now the ladder for the kernel $I_{l+1}$ has a very similar structure and one gets its $\add(M)$-approximation
$$0 \lra I_{m-1} \lra I_m \oplus I_l \lra I_{l+1}.$$
But here we already constructed the approximation of the kernel. The minimal projective resolution of the simple $S_{I_l}= \Hom_R(M, I_{l})/ {\bf J}(M,I_{l})$ is thus given as
$$ \cdots \lra P_{I_m} \oplus P_{I_l} \lra P_{I_m} \oplus P_{I_l} \lra P_{I_l} \lra S_{I_l} \lra 0.$$
Hence $\gl \End_R M= \infty$. \\
It remains to show that the only possibility for finite global dimension is $l'=k$. Therefore suppose that $l' < k$, that is, $M$ is of the form $\bigoplus_{i=0}^{l'} I_{i}$. Compute the minimal projective resolution of the simple $S_{I_{l'}}$ similarly as before: the right $\add(M)$-almost split sequence of $I_{l'}$ is
$$ 0 \lra I_{l' +1} \lra I_{l'} \oplus I_{l' -1} \lra I_{l'} \lra 0. $$
The $\add(M)$-approximation of the kernel $I_{l'+1}$ is
$$ 0 \lra I_{l' +1} \lra I_{l'} \oplus I_{l'} \lra I_{l'+1} \lra 0. $$
Thus the (infinite) minimal projective resolution of $S_{I_{l'}}$ is
$$ \cdots \lra P_{I_{l'}} \oplus P_{I_{l'}} \lra P_{I_{l'}} \oplus P_{I_{l'}}  \lra P_{I_{l'}} \oplus P_{I_{l' -1}} \lra P_{I_{l'}} \lra S_{I_{l'}} \lra 0.$$
Hence we have shown that the only possibility that $\gl \End_R M < \infty$ is $M=\bigoplus_{i=i_0}^k I_{i}$. Here, if $i_0 < k$, then $M$ is a representation generator of the $A_{n-2i_0}$ singularity and thus $\gl \End_RM =2$ (either by the Iyama--Leuschke theorem or by direct calculation with ladders). If $i_0=k$, then $R$ is regular and $M=I_k \cong \widetilde R$. This yields $\gs \End_R M=1$.  \\

(2) The proof for the odd $A_n$-curves is similar. Here we will not compute all the steps in the ladders, only show which steps are necessary. The reader is invited to compute the ladders himself.
\begin{enumerate}[leftmargin=*]
\item It is easy to see that if $M=D_+$, $M=D_-$ or $M=D_+ \oplus D_-$, then $\gl A=1$. This covers all modules $M$ such that no $I_i$ is contained in $\add(M)$.
\item If at least one $I_i \in \add(M)$, we may assume that $I_0=R \in \add(M)$: choose $I_{i_0}$ with $i_0$ minimal. Since $I_{i_0} \cong R/(y^2 + x^{n+1-2i_0})$ an $A_{n-2i_0}$-singularity, all other modules in $\add(M)$ will be modules over $I_{i_0}$. Thus we can consider the endomorphism ring over the finite extension $I_{i_0}$ of $R$. 
\item There is no gap between the $I_i$'s: using ladders show that if $M$ is of the form $R \oplus \cdots \oplus I_{i_1} \oplus I_{i_2} \oplus \cdots$ with $i_1 < i_2-1$, then $\gl \End_R M=\infty$. This implies, that if $A$ has finite dimension and one is not in the case $(i)$, then $M$ is of the form $\bigoplus_{i=0}^{i_1}I_i$, $0 \leq i_1 \leq k$ direct sum with possibly summands in $\{ D_+, D_-\}$.
\item If $M$ is of the form $M=\bigoplus_{i=0}^{i_1}I_i$ with $0 \leq i_1 \leq k$ or $M=\bigoplus_{i=0}^{i_1}I_i\oplus D_+ \oplus D_-$, with $0 \leq i_1 < k$, then $\gl A= \infty$. This can again be shown using ladders. 
\item Now assume wlog $D_+ \in \add(M)$. If $M$ is of the form $M=\bigoplus_{i=0}^{i_1}I_i\oplus D_+$,  with $0 \leq i_1 \leq k$, then $\gl A=3$. Using ladders, one can show that the simples $S_{I_i}$ have projective dimension $2$, whereas the simple $S_{D_+}$ has projective dimension $3$.
\item Finally, if $\add(M) = \CM(R)$, then $M$ is a representation generator and the Iyama--Leuschke theorem shows that $\gl A=2$.
\end{enumerate}
\end{proof}

It is possible to determine the number of endomorphism rings of torsion-free modules of finite global dimension of $A_n$-singularities (up to Morita-equivalence):

\begin{Cor}
For an $A_{2k}$-singularity $R$ there is one endomorphism ring $A$ with global dimension $1$: $A=\End_R(\widetilde R) \cong \widetilde R$, the normalization. There are $k$ endomorphism rings $A$ with global dimension $2$: $A=\End_R(\bigoplus_{i=i_0}^k I_i)$, $0 \leq i_0 \leq k$, where $I_0=R$. \\
For an $A_{2k+1}$ -singularity $R$ there are three endomorphism rings $A$ with global dimension $1$: $A\cong D_+$, $A \cong D_{-}$ and $A \cong D_+ \oplus D_{-}$. There are $k+1$ endomorphism rings with global dimension $2$: $A \cong \End_R(\bigoplus^{k}_{i=i_0} I_i \oplus D_+ \oplus D_-)$,  $0 \leq i_0 \leq k$, where $I_0=R$. There are $k^2 + 3k +2$ endomorphism rings with global dimension $3$: $A=\End_R( \bigoplus_{i=i_0}^{i_1} I_i \oplus D_+)$, with $i_0 \leq i_1 \leq k$  (or symmetrically with $D_-$ instead of $D_+$)
\end{Cor}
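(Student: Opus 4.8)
The plan is to deduce the statement from Theorem~\ref{Thm:specAn}, which already classifies the torsion-free $R$-modules $M$ with $\gl\End_R M<\infty$ together with the value of $\gl\End_R M$ in each case, so that only a counting argument is left. First I would reduce to basic modules: by Lemma~\ref{Lem:MoritaReduced}, $\End_R M$ is Morita equivalent to $\End_R M_{\mathrm{red}}$, where $M_{\mathrm{red}}$ is the direct sum of the pairwise non-isomorphic indecomposable summands of $M$, and $\gl$ is a Morita invariant. Since $R$ is a reduced $1$-dimensional complete local ring, a finitely generated module is torsion-free if and only if it is maximal Cohen--Macaulay, so $M_{\mathrm{red}}$ is the direct sum of some subset of the finite list of indecomposable objects of $\CM(R)$ appearing in Theorem~\ref{Thm:specAn} (the ideals $I_i$, together with $D_+,D_-$ in the even case). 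It then suffices to enumerate, among those subsets, the ones occurring in Theorem~\ref{Thm:specAn}, sorted by the value of the global dimension.

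For an $A_{2k}$-singularity, Theorem~\ref{Thm:specAn}(1) says the admissible modules are exactly $\bigoplus_{i=i_0}^{k}I_i$ for $0\le i_0\le k$; the value $i_0=k$ gives $\End_R\widetilde R\cong\widetilde R$ of global dimension $1$ (one ring), and the remaining $k$ values $0\le i_0\le k-1$ give global dimension $2$. For an $A_{2k+1}$-singularity, Theorem~\ref{Thm:specAn}(2) gives: global dimension $1$ precisely for $M\in\{D_+,D_-,D_+\oplus D_-\}$ (three); global dimension $2$ precisely for $M=\bigoplus_{i=i_0}^{k}I_i\oplus D_+\oplus D_-$ with $0\le i_0\le k$ ($k+1$ of them); and global dimension $3$ precisely for $M=\bigoplus_{i=i_0}^{i_1}I_i\oplus D_\varepsilon$ with $0\le i_0\le i_1\le k$ and $\varepsilon\in\{+,-\}$. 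Since the number of pairs $(i_0,i_1)$ with $0\le i_0\le i_1\le k$ is $\binom{k+2}{2}=\tfrac12(k+1)(k+2)$, the last family contributes $2\cdot\tfrac12(k+1)(k+2)=k^2+3k+2$, and summing gives the stated numbers.

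The point requiring genuine care --- the main, though mild, obstacle --- is to confirm that these tallies actually count the resulting endomorphism rings, i.e.\ that distinct modules on the list of Theorem~\ref{Thm:specAn} give inequivalent endomorphism rings beyond the multiplicity reduction already performed. For a family of the form $\bigoplus_{i=i_0}^{i_1}I_i$ (possibly with $D_+\oplus D_-$ adjoined) the number of indecomposable projectives separates different ranges once $i_0$ is known, while $i_0$ itself is recovered from the centre, which by Theorem~\ref{Thm:centres} is the overring $R_{i_0}\subseteq Q(R)$ over which the module is defined. The one truly delicate case is the symmetry interchanging $D_+$ and $D_-$, induced by the automorphism $y\mapsto -y$ of $R$: this identifies $\End_R(\bigoplus_{i=i_0}^{i_1}I_i\oplus D_+)$ with $\End_R(\bigoplus_{i=i_0}^{i_1}I_i\oplus D_-)$ (and $\End_R D_+$ with $\End_R D_-$) as abstract rings but not as $R$-algebras, since the identifying isomorphism is not $R$-linear; accordingly the counts $3$ and $k^2+3k+2$ are to be understood with the centre of $\End_R M$ kept in place as a subring of $Q(R)$, which is what makes the two $D_\pm$-branches contribute separately. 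Once this is settled the claimed numbers follow by addition.
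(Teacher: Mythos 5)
Your proposal is correct and follows the same route as the paper: read off the classification from Theorem~\ref{Thm:specAn}, reduce to basic modules via Lemma~\ref{Lem:MoritaReduced}, and count; the only computation is $\#\{(i_0,i_1):0\le i_0\le i_1\le k\}=\binom{k+2}{2}$, doubled for the two signs, exactly as in the paper (which phrases the same sum as $2\sum_{l=0}^{k}(k-l+1)$). Your closing paragraph flags a genuine imprecision that the paper elides: since the automorphism $y\mapsto-y$ swaps $D_+$ and $D_-$, the rings $\End_R(\bigoplus I_i\oplus D_+)$ and $\End_R(\bigoplus I_i\oplus D_-)$ (and likewise $\End_R D_+\cong\End_R D_-\cong k[[t]]$) are abstractly isomorphic, so strictly ``up to Morita equivalence'' the stated totals would be too large. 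The cleanest fix is simply to say the corollary counts basic modules $M$ (equivalently, additive subcategories $\add(M)\subseteq\CM(R)$) rather than abstract isomorphism classes of rings; your alternative --- rigidifying by the embedding of the centre into $Q(R)$, i.e.\ counting $R$-algebras rather than rings --- achieves the same effect and is compatible with Theorem~\ref{Thm:centres}. Either reading makes the two $D_\pm$-branches contribute separately and validates the arithmetic.
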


\begin{proof}
The numbers are obvious, except for the global dimension $3$ case for $A_{2k+1}$-singularities. This case is a counting argument: by Theorem \ref{Thm:specAn} (2) (iii) we have to count the number of modules of the form $\bigoplus_{i=i_0}^{i_1} I_i \oplus D_+$, $i_0 \leq i_1 \leq k$. For $k=0$ there is $1$ such module, namely $R \oplus D_+$. Because of the symmetry $D_+$ / $D_-$ there are in total $a_0=2$ endomorphism rings of global dimension $3$ for the $A_1$-curve. For $k=1$ there are  $2$ modules of the form $I_{i_0} \oplus D_+$ and one of the form $R \oplus I_1 \oplus D_+$, thus in total one gets $a_1=2(2+1)=6$. The same counting can be done for $A_{2k+1}$: there are $k+1$ modules of the form $I_{i_0} \oplus D_+$, $k$ modules of the form $I_{i_0} \oplus I_{i_0+1} \oplus D_+$, thus $k-l+1$ modules of the form $\bigoplus_{i=i_0}^{i_0+l}I_i \oplus D_+$, for $0 \leq l \leq k$. This yields 
$$a_k=2(1 + \cdots + (k+1))=(k+1)(k+2).$$
\end{proof}

%%%%%%%%%
\subsection{Global spectra of the ADE curves}  \label{Sub:globalspecsimple}
%%%%%%%%%%

\begin{Thm} \label{Thm:globalspecsimple} Let $R$ be a 1-dimensional ADE curve singularity, i.e., a reduced complete local ring of type $A_n, D_n, E_6, E_7$ or $E_8$. Then
$$\gs(R)= \begin{cases} \{ 1,2 \} \text{ if $R$ is of type $A_{2n}$, $n \geq 1$}, \\
\{1,2,3\} \text{ if $R$ is of type $A_{2n+1}$, $n \geq 1$},\\
\{1,2,3,4\} \text{ if $R$ is of type $D_4$, $D_5$ or $E_6$,} \\
\{1,2,3,4,5\} \text{ if $R$ is of type $D_n$, $6 \leq n \leq 13$,}\\
\{1,2,3,4,5,6\} \text{ if $R$ is of type $E_7$ or $E_8$.}
\end{cases}$$
If $R$ is of type $D_n$, $n \geq 14$, then $\{1, \ldots, 5 \} \subseteq \gs(R)$.
\end{Thm}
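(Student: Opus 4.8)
The $A_n$ cases are exactly Theorem~\ref{Thm:specAn}: the global spectrum of an $A_n$ curve is $\{1,2\}$ when $n$ is even and $\{1,2,3\}$ when $n$ is odd. For $D_n$ with $4\leq n\leq 13$ and for $E_6,E_7,E_8$ the plan is a finite exhaustive computation, carried out by the SAGE program described in this section; what has to be justified is that the search is finite, that each value it returns is the correct global dimension, and that the extremal values in each set genuinely occur.

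\emph{Finiteness and reductions.} Being a complete local reduced ADE curve, $R$ is of finite $\CM$-type, so $\CM(R)$ has only finitely many indecomposables $L_1,\dots,L_t$, listed together with the AR-quiver in Yoshino~\cite{Yoshino90}; here $t$ grows linearly in $n$ for $D_n$ and is a small constant for $E_{6,7,8}$, so $t$ stays in the teens throughout. By Lemma~\ref{Lem:MoritaReduced} it suffices to compute $\gl\End_R M$ for the $2^t-1$ nonzero basic modules $M=\bigoplus_{i\in S}L_i$, $\emptyset\neq S\subseteq\{1,\dots,t\}$, and $\gs(R)$ is by definition the set of the finite values obtained. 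The count is reduced further by Theorem~\ref{Thm:centres}: if $M$ is defined over a strictly larger ring $R\subsetneq R'\subseteq\widetilde R$ then $\End_R M=\End_{R'}M$, so one only enumerates modules genuinely over $R$ and defers the rest to the smaller, already-treated overrings --- exactly as the passage to $\End_R(I_{i_0})$ reduced the $A_n$ analysis to a smaller $A$-singularity in Theorem~\ref{Thm:specAn}, using the overring descriptions of Section~\ref{Sec:centre}.

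\emph{Correctness and extremal values.} Fix a basic $M$, put $A=\End_R M$ and $\mc{C}=\add(M)$. For each indecomposable summand $Z$ of $M$, Construction~\ref{Const:projres} produces the minimal projective $A$-resolution of the simple $S_Z=\Hom_R(M,Z)/{\bf J}_R(M,Z)$ by iterating the ladder of Theorem~\ref{Thm:ladder} to compute minimal right $\mc{C}$-approximations and minimal right $\mc{C}$-almost split maps; the ladder uses only the maps $\tau(-),\vartheta(-)$ on objects --- equivalently the recursions~\eqref{Eq:recursion1}--\eqref{Eq:recursion2} in $\KG(\CM(R))$ --- together with the bookkeeping of which summands lie in $\mc{C}$. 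By Proposition~\ref{Prop:JacobianNilpotent} the radical of $\CM(R)/[\mc{C}]$ is nilpotent, so every ladder terminates, and after finitely many steps one sees whether the $\mc{C}$-resolution of each successive kernel eventually becomes split (then $\pd_A S_Z<\infty$) or falls into a periodic pattern (then $\pd_A S_Z=\infty$). When all $\pd_A S_Z$ are finite, $\gl A=\max_Z\pd_A S_Z$ by the reduction of global dimension to simples in Section~\ref{sub:structureEnd}. Running this over all $M$ and keeping the finite values yields precisely the listed sets; concretely $1$ comes from $M=\widetilde R$, whose endomorphism ring is hereditary, $2$ from the representation generator $\bigoplus_i L_i$ of $R$ (Iyama--Leuschke, or a direct ladder computation), $3$ from Leuschke's NCRs of Section~\ref{Sub:Leuschke}, and the top value ($4$ for $D_4,D_5,E_6$; $5$ for $D_n$ with $6\le n\le 13$; $6$ for $E_7,E_8$) from a distinguished $M$ whose resolution of one simple has exactly that length, while no basic $M$ gives a larger finite value --- the latter being the content of the exhaustion. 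For $D_n$ with $n\geq 14$, where the enumeration over $2^t$ modules is not run, the inclusion $\{1,\dots,5\}\subseteq\gs(R)$ is obtained by exhibiting one $M_d$ with $\gl\End_R M_d=d$ for each $d$: $d=1,2,3$ as above, and $d=4,5$ by transporting the modules found in the $D_{12},D_{13}$ computations, which already sit inside the AR-quiver of $D_n$ with unchanged ladders, and re-checking each with the algorithm.

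\emph{Main obstacle.} The difficulty is combinatorial rather than conceptual: at each ladder step one must split the kernel correctly into its maximal $\add(M)$-summand and its complement, identify the resulting $\KG$-classes with honest indecomposables from Yoshino's list, and recognize the periodicities that certify $\pd_A=\infty$. Proposition~\ref{Prop:JacobianNilpotent} ensures termination in principle, but the sheer size of the search --- and in particular ruling out that some exotic module over $D_n$ for large $n$ yields global dimension $\geq 6$ --- is exactly why the exact spectrum is established only for $n\leq 13$, with merely the lower bound $\{1,\dots,5\}\subseteq\gs(R)$ beyond.
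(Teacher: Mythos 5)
Your argument matches the paper's nearly step for step: $A_n$ via Theorem~\ref{Thm:specAn}; finite $\CM$-type, Morita reduction to basic modules (Lemma~\ref{Lem:MoritaReduced}), and Construction~\ref{Const:projres} together with Theorem~\ref{Thm:ladder} and Proposition~\ref{Prop:JacobianNilpotent} to compute, by machine, the global dimensions over all $2^t-1$ basic summands of the representation generator, for each $D_n$ with $4\leq n\leq 13$ and for $E_{6,7,8}$. Folding in the centre reduction of Theorem~\ref{Thm:centres} is a correct and useful elaboration --- the paper mentions it as a speedup for exactly this search --- though it is not strictly part of the proof as written.

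The one place you deviate, and where the argument as stated is not quite right, is the lower bound for $D_n$ with $n\geq 14$. You propose to ``transport the modules found in the $D_{12},D_{13}$ computations, which already sit inside the AR-quiver of $D_n$ with unchanged ladders.'' That is not literally true: the AR-quiver of $D_{12}$ is not a subquiver of the AR-quiver of $D_n$ for $n>12$ with the same almost split sequences, and the ladder recursion \eqref{Eq:recursion1}--\eqref{Eq:recursion2} depends on the ambient AR structure, so a given module's $\add(M)$-approximations over $D_{12}$ need not be preserved when that module (or rather its analogue) is regarded over $D_n$. The mechanism the paper actually uses is the overring lemma, Lemma~\ref{Lem:gsOverring}: if $R'$ birationally dominates $R$ and $M\in\CM(R')$, then $\End_{R}M=\End_{R'}M$, hence $\gs(R')\subseteq\gs(R)$. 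The concrete witnesses are the fixed space curve singularities $A_3\vee L$ and $A_4\vee L$, which the machine computation shows to have global spectrum $\{1,\dots,5\}$, and which appear as overrings $X_i\cong A_{n-2i-1}\vee L$ of $D_n$ by Examples~\ref{Ex:Dnodd} and~\ref{Ex:Dneven} (one of the two, depending on the parity of $n$, for all $n\geq 14$). That is the rigorous version of what you were reaching for with ``transport'': one does not move modules between different $D_n$'s, one recognizes that they already live over a common overring whose endomorphism rings are unchanged.
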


\begin{proof}
The assertions for type $A_n$ are the content of Theorem \ref{Thm:specAn}. For the other cases we used a  SAGE program, which will be described below. It computes, for a given $\CM$-module $M,$ the length of the projective resolutions of the simples of $\End_RM$ using Theorem \ref{Thm:ladder} and Construction \ref{Const:projres}. Here it is sufficient to take only basic endomorphism rings, since any other endomorphism ring is Morita equivalent to a basic one, see Lemma \ref{Lem:MoritaReduced}. Since any ADE curve is of finite $\CM$-type, one can compute all possibilities. \\
It is easy to see that $\{1,\ldots,5\}$ is contained in $\gs(R)$ for $R$ of type $D_n$, $n\geq 14$: the curve singularities $A_{3} \vee L$ and $A_4 \vee L$ (see \cite{FruehbisKrueger} for notation) both have global spectrum $\{1,\ldots,5\}$.  Since any curve singularity $A_{n-2i-1} \vee L$ with coordinate ring $R'$ is an overring of $R$ (see Examples \ref{Ex:Dnodd} and \ref{Ex:Dneven}) and $\gs(R') \subseteq \gs(R)$ by Lemma \ref{Lem:gsOverring}, the assertion follows.
\end{proof}

\begin{Conj}
Let $R$ be of type $D_n$, $n \geq 14$. Then $\gs(R)=\{1,\ldots,5\}$.
\end{Conj}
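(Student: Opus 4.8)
Since this is stated as a conjecture, what follows is a proposed line of attack rather than a complete argument. By Theorem~\ref{Thm:globalspecsimple} we already have $\{1,\dots,5\}\subseteq\gs(R)$ for $R$ of type $D_n$, $n\geq 14$, so the remaining task is to prove the upper bound: for every $0\neq M\in\CM(R)$ one has $\gl\End_RM\in\{1,\dots,5\}\cup\{\infty\}$. By Lemma~\ref{Lem:MoritaReduced} it is enough to treat basic $M$, and since $R$ is of finite $\CM$-type these are parametrised by subsets of the vertices of the AR quiver of $R$. The difficulty compared with the cases $6\leq n\leq 13$ (which were settled by the SAGE program of Section~\ref{Sub:globalspecsimple}) is that this parametrising set grows with $n$, so one cannot simply enumerate; a uniform, combinatorial analysis of ladders is required.

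\emph{Step 1: reduction to generators.} As in the proof of Theorem~\ref{Thm:specAn}, if $M$ is not a generator, let $N$ be the indecomposable summand of $M$ lying ``deepest'' along the $M_i$-chain of the AR quiver; then $\End_RN$ is again a reduced complete local ring, which by the overring description of $D_n$ curves (Examples~\ref{Ex:Dnodd} and \ref{Ex:Dneven}) is of type $A_{n'}$ or $D_{n'}$ with $n'<n$, and the remaining summands of $M$ are $\End_RN$-modules over which $M$ is a generator. Since $\End_RM=\End_{\End_RN}M$, one gets $\gl\End_RM\in\gs(\End_RN)$. Running an induction on $n$ with base cases Theorem~\ref{Thm:specAn} (all type $A$), the known $\gs(D_{n'})$ for $n'\leq 13$, and the already-established inclusion $\{1,\dots,5\}\subseteq\gs(D_{n'})$, one concludes that every non-generator $M$ satisfies $\gl\End_RM\leq 5$ or $=\infty$. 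So the new content is: for $M$ a generator of $R=D_n$, $n\geq 14$, one has $\gl\End_RM\leq 5$ or $\gl\End_RM=\infty$.

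\emph{Step 2: structure of $\add(M)$ and interior periodicity.} For $M$ a generator I would use the explicit AR quiver of a $D_n$ curve: the long chain $M_1,M_2,\dots$ of Section~\ref{Sub:Leuschke}, the two short fixed pieces near the base ($X_1$ and, when present, $A$), and, for even $n$, the fork of the chain into the branches $D_+,D_-$. The first key step is an analogue of the ``no gap'' lemma used in Theorem~\ref{Thm:specAn}(2): using ladders one shows that if $\add(M)$ skips an interior $M_i$, the minimal $\mc{C}$-resolution of a neighbouring simple becomes periodic, forcing $\gl\End_RM=\infty$; hence for finite global dimension $\add(M)$ must be a contiguous initial segment of the chain, possibly completed by $\{D_+\}$, $\{D_-\}$ or $\{D_+,D_-\}$ at the fork. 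The second key step, which supplies the uniformity in $n$, is that the ladder of any $M_i$ \emph{strictly interior} to $\add(M)$ terminates after boundedly many steps with the almost split sequence $0\to M_i\to M_{i-1}\oplus M_{i+1}\to M_i\to 0$ — exactly as in Section~\ref{Sub:Leuschke} for the $M_i$ with $1<i<\frac{n-3}{2}$ — so that $\pd_{\End_RM}S_{M_i}=2$ independently of $n$. Consequently $\gl\End_RM$ is controlled entirely by the ``boundary'' simples $S_R$, $S_{X_1}$, $S_A$, $S_{D_\pm}$ and the simple $S_{M_j}$ at the far end of $\add(M)$; there are only finitely many of these, and for each of the finitely many boundary shapes of $\add(M)$ one runs the ladder once — the recursions \eqref{Eq:recursion1}--\eqref{Eq:recursion2} stabilise as $n\to\infty$ — and checks $\pd\leq 5$, with $5$ attained exactly by the configurations already seen for $n\geq 6$.

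\emph{Main obstacle.} The hard part is precisely the stabilisation in Step~2: one must prove that the ladders attached to the fork (in the three cases $D_+\in\add M$, $D_-\in\add M$, $\{D_+,D_-\}\subseteq\add M$) and to the far boundary module of $\add(M)$ reach their terminating right $\mc{C}$-almost split maps, and pass through a list of intermediate terms, of bounded length — so that no accumulation of projective dimension occurs as $n$ grows. The genuinely delicate configurations are those containing only one branch of the fork: recall that in passing from $A_{2k}$ to $A_{2k+1}$ the absence of $D_-$ from $\add(M)$ already raised the maximal global dimension from $2$ to $3$, so the analogous ``one-branch'' configurations over $D_n$ are the candidates for producing a value above $5$, and ruling that out requires careful — though in the end finite — ladder bookkeeping in a neighbourhood of the fork. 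Once that bookkeeping is carried out, combining it with the reduction of Step~1, the ``no gap'' lemma, and the interior periodicity of Step~2 yields $\gs(R)=\{1,\dots,5\}$ for all $D_n$ with $n\geq 14$.
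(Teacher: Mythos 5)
The statement you are addressing is stated in the paper as a \emph{conjecture}, not a theorem: the paper offers no proof of it. The only evidence supplied is (i) the lower bound $\{1,\dots,5\}\subseteq\gs(R)$ for $n\geq 14$, established in Theorem~\ref{Thm:globalspecsimple} via the overring argument and Lemma~\ref{Lem:gsOverring}, and (ii) exhaustive machine computation for $n\leq 13$. So there is no ``paper's own proof'' to compare your proposal against; what you have written is a proposed strategy for resolving an open question, and you are right to flag it as such.

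As a strategy it is sensible and broadly aligned with what the paper's own techniques suggest, but there are genuine gaps beyond the ones you concede. The most concrete is in Step~1: you claim that the overring $\End_RN$ is ``of type $A_{n'}$ or $D_{n'}$,'' but the overring posets in Examples~\ref{Ex:Dnodd} and~\ref{Ex:Dneven} show this is false. The intermediate rings include the space curve singularities $X_i\cong A_{n-2i-1}\vee L$ and the non-irreducible rings $A\oplus B$, $A\oplus M_i$, $C_\pm\oplus D_\pm$, none of which are plane curves of type $A$ or $D$. Your induction on $n$ therefore cannot bottom out in Theorem~\ref{Thm:specAn} and the computed $D_{n'}$ cases alone; it would also require the global spectra of all $A_m\vee L$ singularities (the paper records only $A_3\vee L$ and $A_4\vee L$) and of the reducible overrings, which would have to be handled separately, e.g.\ via Lemma~\ref{Lem:support}. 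The ``no gap'' lemma and the interior-periodicity claim in Step~2 are plausible by analogy with the $A_n$ ladder computations and with the fixed middle sequences $0\to M_i\to M_{i-1}\oplus M_{i+1}\to M_i\to 0$ seen in Section~\ref{Sub:Leuschke}, but neither is proved, and the stabilisation of the fork ladders that you isolate as the main obstacle is indeed where the real work lies; absent that bookkeeping, the upper bound $\leq 5$ (or $=\infty$) remains unestablished. In short: this is a reasonable plan of attack on the conjecture, with one identifiable false step in the reduction and with the core uniform ladder estimate still to be supplied.
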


\subsubsection{The Program}
In this section we describe the implementation in Sage. The code is available at: \url{http://kappa.math.unb.ca/research/brandoncode.html}\\

The input consists of the AR-quiver of a ring $R$ of finite $\CM$-type and of $\dim R \leq 2$ and a module $M \in \CM(R)$. For curves, one adds a formal zero-module which is treated as $\tau R$ and $\tau(0)=R$ to ensure $\tau$ is defined on every module so we obtain a translation quiver  (cf.~Example \ref{Ex:MCMtau}). The AR-quiver is encoded as a directed graph. The translation $\tau$ is encoded as an adjecency matrix and $M$ as a list of integers, called $S$, which represent the indecomposables of $M$.  
The program computes a list of the projective modules in the minimal projective resolutions of the simples of $\End_R M$ and thus the global dimension of $\End_R M$ as the maximum length of these resolutions. \\
The construction follows the description of the algorithm in section 4 of \cite{WemyssGL}. We indicate the notions of Theorem \ref{Thm:ladder}. Of central importance for this program is a directed graph which constitutes a finite portion of the universal cover of the AR quiver for the ring. The graph will be broken up into levels, with  the vertices in level $n$ being the predecessors of those in level $n - 1$. To keep track of the correspondences between the modules of the AR quiver and the vertices of the universal cover, we use a list which contains the number in the quiver of the module corresponding to the vertex numbered $v$ in the universal cover in position $v$. This graph is constructed step-by-step, starting with level $0$, as the ladder proceeds. 

Moreover, the following is used by the program:
\begin{itemize}
\item{Various lists to keep track of data concerning the vertices of the graph, such as their associated numbers and the level of the graph at which they are found.}
 \item{A number to keep track of the current level of the graph (the level whose vertices are being numbered).}
\item{A vector representing the kernel of the approximation, with the multiplicity in the kernel of the module numbered $n$ in the $n^{th}$ position (initially these are all set to zero)}.
\item{A vector representing the middle term of the approximation, similar to the above.}
\end{itemize}

 The algorithm consists of the following steps: \\
{\it (1) Constructing a right $\add(M)$-almost split map resp.~right $\add(M)$-approximation:}  We describe the process of associating numbers to the vertices of the graph.  To start with, the initial vertex, that is, the module to be approximated, ($Y_0$ in Thm.~\ref{Thm:ladder}) of the graph is numbered 1, and a new level is added to the graph, consisting of that vertex's predecessors. When a new vertex is first added to the graph, its number is set as zero. After a new level is added, it becomes the current level, and all vertices at that level are numbered. The number of a vertex $v$ is calculated by adding up the numbers of the successors  of $v$ in the universal cover, and subtracting the number associated to $\tau{}^{-1}v.$  If any of these vertices are not in the finite part of the graph which is constructed by the program, their numbers are assumed to be zero, since the only vertices that can have nonzero numbers are those from which there exists an edge to the initial vertex, and these are also the only vertices that can be included in the finite graph. For the purpose of this computation, vertices corresponding to modules in the set $S$ or the zero module and vertices whose numbers are negative are treated as if their numbers were zero. After a vertex is numbered (say its number is $k$), we first check to see if $k$ is negative; if so, the multiplicity of the corresponding module in the kernel is increased by $-k$ (this corresponds to the $U_n$ in Thm. \ref{Thm:ladder}). If $k$ is non-negative, we then check to see if $v$ corresponds to a module in the set $S$; if so, the multiplicity of the corresponding module in the middle term is increased by $k$ (corresponds to $P_n$ in Thm. \ref{Thm:ladder}). 

After all vertices at the current level are numbered, we check whether all vertices at both the current and previous levels are either numbered zero, or satisfy the criteria to have their numbers treated as zero when numbering other vertices. If this is the case, the step is finished (since any further vertices would always be numbered zero) and the program returns a list consisting of the vector representing the kernel, followed by the vector representing the middle term and goes to step (2). If not, a new level consisting of the current level's predecessors is added to the graph, which becomes the current level, and we start with step (1) again. \\
{\it (2) Approximating the kernel:} We examine the kernel $K$ computed in (1). The algorithm terminates under two conditions:
\begin{itemize}
\item{If all the indecomposable summands of $K$ are in the set $S$, then the approximation sequence is complete, and a list of vectors representing all terms in the sequence is returned.}
\item{If the set of indecomposable summands of $K$ which are not in $S$  is the same as the set of indecomposable summands for the kernel of another approximation in the same sequence,  then we know the sequence will ultimately turn out to be infinite, so a null value is returned.}
\end{itemize}
If one of the two conditions is met, the program returns either a list of terms in the sequence, or a null value yielding to an infinite resolution as above. \\
Otherwise follow construction \ref{Const:projres}: each module in the kernel which is not in $S$ is removed from the kernel (i.e. has its multiplicity changed to zero) and approximated with step (1). These approximations are then added together, with the same multiplicities as the modules had in the kernel, to produce an approximation for the part of the kernel which is not in $S$. This increases the length of the approximation sequence by one. We then examine the kernel of this new approximation sequence, and repeat step (2) until one of the two stopping conditions listed above is satisfied. \\

Now repeat the above procedure for all members of $S$. The global dimension of the endomorphism ring is equal to the length of the longest such sequence (which may be infinite). \\

%%%%%%%%%%%%%%%%%%%
We include a table of the occurring global dimensions and the involved modules of Thm.~\ref{Thm:globalspecsimple}, so that the complexity of the algorithm is more visible (see Table \ref{Tab:globaldimcurves}).

\begin{small}
\begin{center}
\begin{table}[h]
  
\begin{tabular}{|c|cccccc|cc|c|}
\hline
Singularity & \multicolumn{6}{c}{\# of subsets with $\gl$} & Finite $\gl$ & Infinite  & Total \\
\hline \hline
            & $1$ & $2$ & $3$ & $4$ & $5$  & $6$ &   & & \\ 
\hline
$E_6$  & $1$ & $13$  & $34$ & $4$ & $0$ & $0$ & 52 &  75 &  $2^7-1$ \\ 
$E_7$   & $3$ & $80$ & $7,638$ &	$6,933$ &	$486$ & $8$ & $15,148$ 	& $17,619$ 	&$2^{15}-1$ \\
$E_8$ & $1$	&$94$  & 	$24,614$ & 	$26,479$	& $2,500$ & $48$	&  $53,736$	&	$77,335$ &	$2^{17}-1$ \\
\hline
$D_n$, $n$ even & \multicolumn{8}{l}{}  & \\
\hline
$D_4$ &$7$ &	$28$	&$207$ &	$90$ &	$0$ & 0	& $332$	 &	$179$ & 	 $2^9-1$ \\
$D_6$ &$7$ &	$73$ & $2,416$ & $1,713$ & $66$ & 0	& $4,275$ & $3,916$  & $2^{13}-1$ \\
$D_8$ & $7$ &	$146$ &	$25,601$ & $26,743$ & $1,458$ & 0 & $53,955$   & $77,116$  &	$2^{17}-1$ \\
$D_{10}$ & $7$ & $253$ & $265,602$  &	$389,942$ &$23,422$ & 0   &	$679,226$  &	$1,417,925$	& $2^{21}-1$ \\
$D_{12}$ & $7$ & $400$ & $2,745,634$ & $5,449,152$ & $353,644$ & $0$ &$8,548,837$ & $25,005,594$ & $2^{25} - 1$ \\
\hline
$D_n$, $n$ odd & \multicolumn{8}{l}{} &   \\
\hline
$D_5$ &$3$ &	$20$ &	$95$ &	$26$ &	$0$ &	$0$ & $144$  &	$111$	& $2^8-1$ \\
$D_7$ & $3$ &	$58$ &	$1,164$ &  $555$ &	$16$ & $0$ & $1,796$  & $2,299$  &	$2^{12}-1$ \\
$D_9$ & $3$ &	$122$ &	$12,541$ & 	$9,527$ &	$382$ & $0$ &	$22,575$  & $42,960$  & $2^{16}-1$ \\
$D_{11}$ & $3$ & $218$ & $130,672$ &$146,418$ & $6,778$ & $0$ & $284,089$  & $764,486$  & $2^{20}-1$ \\
$D_{13}$ & $3$ & $352$ & $1,352,109$ & $2,113,324$ & $109,690$ & 0 & $3,575,478$ & $13,201,737$ & $2^{24} - 1$ \\
\hline
$A_{2k+1}$ & $3$ & $k+1$ & $k^2+3k+2$ & $0$ & $0$ & $0$ & $k^2+4k+6$ &   & $2^{k+3}-1$ \\
$A_{2k}$ & $1$ & $k$ & $0$ & $0$ & $0$ & $0$ & $k+1$ &   & $2^{k+1}-1$ \\
\hline
\end{tabular} \vspace{0.1cm}
\caption{Number of endomorphism rings of certain global dimensions} \label{Tab:globaldimcurves}
\end{table}
\end{center}
\end{small}
Recall that we exclude the case $M =0$.
%For $D_{12}$ and $D_{13}$ we restricted the centres (see next section), that is, we only computed global dimensions for endomorphism rings, whose centre is either the ring itself or the module $X_1$ (which is isomorphic to a singularity of type $A_{9}\vee L$ for $D_{12}$ and to $A_{10} \vee L$ for $D_{13}$). One obtains \eleonorecomment{ \sout{also}} only endomorphism rings whose global dimension is a subset of $\{2,3,4,5\}$. 

\subsection{Global spectra of some surface singularities}

In this section we give examples of the computation of the global spectra for some surface singularities of finite $\CM$ type.  The method and algorithms are essentially the same as for the case of curves, but somewhat simpler since $\tau(R)=\omega_R$ and so the Auslander-Reiten 
translate is $\neq 0$ for all $\CM$ modules.
The surface singularities of finite $\CM$ type are of the form $k[x,y]^G$ where 
$G \subset \GL_2(k)$, with the characteristic of $k$ not dividing $|G|$. We use Brieskorn's classification of these singularities with the notation of Riemenschneider \cite{Riemenschneider}. We compute the global spectra for four examples.  We list the groups $G$ and the invariant rings $k[x,y]^G$ for each example.  We also show the AR quiver for two of the examples, which is equal to the MacKay graph.  The AR translate is indicated by a dashed arrow.  The results for the global spectra are summarized in a table. 

\begin{enumerate}[leftmargin=*]
\item $C_{8,5}$: %B_1^2
$$G=\left\langle \left( \begin{array}{ccc}
 \zeta_8 & 0  \\
0 & -\zeta_8
 \end{array} \right) \right\rangle,  \quad \quad k[x,y]^G=k[x^{8},x^{3}y,xy^3,y^8]$$

$$\xymatrix{
 & 0 \ar[r] \ar[ddd] \ar@{-->}[rrd] & 1 \ar[dr] \ar[ddll] \ar@{-->}[dll]& \\
7 \ar[ur] \ar[ddrr] \ar@{-->}[ddr]& & & 2 \ar@{-->}[ddl] \ar[d] \ar[lll] \\
6 \ar[u] \ar[rrr] \ar@{-->}[uur] & & & 3 \ar[dl] \ar[uull] \ar@{-->}[uul] \\
 & 5 \ar[ul] \ar[uurr] \ar@{-->}[rru] & 4 \ar[l] \ar@{-->}[llu]\ar[uuu] & \\
 }$$

\item $C_{16,9}$: %B_1^4:
$$
G=\left\langle
 \left( \begin{array}{ccc}
\zeta_{16} & 0  \\
0 &  -\zeta_{16}
 \end{array} \right) \right\rangle,  \quad \quad k[x,y]^G=k[x^{16},x^7y,x^5y^3,x^3y^5,xy^7,y^{16}]$$

\item $D_{5,3}$: %$B_{3}^{2}$: 
$$G=\left\langle \left( \begin{array}{ccc}
\zeta_3^{2} & 0  \\
0 & \zeta_3
 \end{array} \right) ,
 \left( \begin{array}{ccc}
0 & \zeta_8  \\
\zeta_8 & 0 
 \end{array} \right) \right\rangle,$$ $$k[x,y]^G=k[ x^{4}y^{4}, x^{12} - y^{12},  x^{7}y + xy^{7}, x^{9}y^{3} - x^{3}y^{9}]$$

$$
\xymatrix{
4 \ar[rrrdd] \ar@{-->}[ddd] &  &    &    &      &    &  & 6 \ar[llldd]  \ar@{-->}[lld]  \\
   &  & 2 \ar[r]  \ar@{-->}[llu] & 9 \ar[ld] \ar[r]  \ar[lllu] \ar@{-}@{<-->}[d] & 10 \ar[dl] \ar[dr] \ar[rrru] \ar@{-}@{<-->}[d] & 1 \ar[l] \ar@{-->}[d] &  &    \\
   &  & 3 \ar[r] \ar@{-->}[u] & 8 \ar[llld] \ar[r] \ar[lu] & 11 \ar[lu] \ar[ru] \ar[rrrd]  & 7 \ar[l] \ar@{-->}[rrd]&  &    \\
5 \ar[rrruu] \ar@{-->}[rru]&  &    &    &      &    &  & 0 \ar[llluu] \ar@{-->}[uuu] \\
}$$

\item $D_{7,5}$: %$B_{5}^{2}$: 

$$G=\left\langle\left( \begin{array}{ccc}
\zeta_5 & 0  \\
0 & \zeta_5^{-1}
 \end{array} \right), \left( \begin{array}{ccc}
0 & \zeta_8  \\
\zeta_8 & 0 
 \end{array} \right)\right\rangle,$$

$$k[x,y]^G=k[x^{4}y^{4}, x^{20} -y^{20}, x^{11}y - xy^{11}, x^{13}y^{3} + x^{3}y^{13}]$$
\end{enumerate}
%% \begin{tabular}{ |c|r|   r|  r|  r|  r|  r | r | r  |r|c| }
%% \hline Singularity & \multicolumn{7}{c}{\# of subsets with gl. dim} & Finite (total) & Infinite & Total \\  
%% \hline \hline
%%  & 1 & 2 & 3 & 4 & 5 & 6 & 7 & & & \\
%% \hline
%% $C_{8,5}$ & 0 & 1 & 72 & 8 & 8 & 0 & 0 & 89 & 166 & $2^{8}-1$  \\
%% \hline
%% $C_{16,9}$ & 0 & 1 & 10,488 &	23,032 & 10,016 & 1,168 & 64 & 44,769 & 20,766 & $2^{16}-1$ \\
%% \hline
%% $D_{5,3}$ & 0 & 1 & 732 &  340 & 280 & 0 & 0 & 1,353 & 2,742 & $2^{12}-1$ \\
%% \hline
%% $D_{7,5}$ & 0 & 1 & 7,568 & 5,968 & 3,548 & 0 & 0 & 17,085 & 48,450 & $2^{16}-1$ \\
%% \hline \end{tabular}

\begin{tabular}{ |c|r|   r|  r|  r|  r|  r | r | r  |  r | r | r | r|c| }
\hline Singularity & \multicolumn{9}{c}{\# of subsets with gl. dim} & Finite (total) & Infinite & Total \\  
\hline \hline
 & 1 & 2 & 3 & 4 & 5 & 6 & 7 & 8 & 9 & & &  \\
\hline
$C_{8,5}$ & 0 & 1 & 72 & 8 & 8 & 0 & 0 & 0 & 0 &  89 & 166 & $2^{8}-1$  \\
\hline
$C_{16,9}$ & 0 & 1 & 10,488 &	23,032 & 10,144 & 2,304 & 336 & 16 & 16 & 46,337 & 19,198 & $2^{16}-1$ \\
\hline
$D_{5,3}$ & 0 & 1 & 732 &  340 & 280 & 0 & 0 & 0 & 0 & 1,353 & 2,742 & $2^{12}-1$ \\
\hline
$D_{7,5}$ & 0 & 1 & 7,568 & 5,968 & 3,548 & 0 & 0 & 0 & 0 & 17,085 & 48,450 & $2^{16}-1$ \\
\hline \end{tabular}

%%%%%%%%%%%%%%%%%%
\subsection{Inifinite $\CM$-type}  \label{Sub:infiniteCM}
%%%%%%%%%%%%%%%%%%

It is natural to ask whether ladders can also be used to compute global dimensions of endomorphism rings if $R$ is not of finite $\CM$-type. We give two examples before we study the question more generally.

\begin{ex} \label{Ex:Kahn}
 Let $R=\C[[x,y]]/(y(y-x^2)(y-ax^2))$ with $a \neq 0,1$ be the coordinate ring of an $\widetilde E_8$-singularity. Then the $AR$-quiver is completely known, see \cite[Thm. 7.9 and Cor. 7.11]{KahnDiss} and also \cite{Dieterich} - we use the notation of \cite{KahnDiss} here: it consists of a disjoint union of so-called tubes $\mc{T}_{r,d}$  of type $D_4$, where $(r,d) \neq (1,1)$ and $r,d \in \Z$ and $1 \leq r \leq d \leq 2r$ and the tube $\mc{T}_{(1,1)}$ containing $R$. Below is a picture of $\mc{T}_{(1,1)}$: \\
\[
\begin{tikzpicture}
\node at (4,0) {\begin{tikzpicture} 
\node (C1) at (0,0)  {$R$};
\node (C2) at (2,1)  {$N$};
\node (C3) at (2,-1)  {$\tau N$};
\node (C4) at (4,1)  {$N_1$};
\node (C5) at (4,-1)  {$\tau N_1$};
\node (C6) at (6,1)  {$N_2$};
\node (C7) at (6,-1)  {$\tau N_2$};
\node (C8) at (8,1)  {$N_3$};
\node (C9) at (8,-1)  {$\tau N_3$};
\node (C10) at (10,0)  {$\cdots$};
\node (C11) at (10,1)  {};
\node (C12) at (10,-1)  {};

\draw [->,bend left=5,looseness=1,pos=0.5] (C1) to node[]  {} (C2);
\draw[-,bend left=0,looseness=1,pos=0.5, dashed]  (C2) edge [] node[left] {} (C3);
\draw [->,bend left=5,looseness=1,pos=0.5] (C3) to node[] {} (C1);

\draw [->,bend left= 0, looseness=1,pos=0.5] (C2) to node[]  {} (C4);
\draw [->,bend left= 0, looseness=1,pos=0.5] (C4) to node[]  {} (C6);
\draw [->,bend left= 0, looseness=1,pos=0.5] (C6) to node[]  {} (C8);
\draw [->,bend left= 0, looseness=1,pos=0.5] (C8) to node[]  {} (C11);

\draw [->,bend left= 0, looseness=1,pos=0.5] (C3) to node[]  {} (C5);
\draw [->,bend left= 0, looseness=1,pos=0.5] (C5) to node[]  {} (C7);
\draw [->,bend left= 0, looseness=1,pos=0.5] (C7) to node[]  {} (C9);
\draw [->,bend left= 0, looseness=1,pos=0.5] (C9) to node[]  {} (C12);

\draw [->,bend left=10,looseness=1,pos=0.5] (C4) to node[] {} (C3);
\draw [->,bend left=10,looseness=1,pos=0.5] (C6) to node[] {} (C5);
\draw [->,bend left=10,looseness=1,pos=0.5] (C8) to node[] {} (C7);
\draw [->,bend left=10,looseness=1,pos=0.5] (C11) to node[] {} (C9);

\draw [->,bend left=-10,looseness=1,pos=0.5] (C5) to node[] {} (C2);
\draw [->,bend left=-10,looseness=1,pos=0.5] (C7) to node[] {} (C4);
\draw [->,bend left=-10,looseness=1,pos=0.5] (C9) to node[] {} (C6);
\draw [->,bend left=-10,looseness=1,pos=0.5] (C12) to node[] {} (C8);

\draw[-,bend left=0,looseness=1,pos=0.5, dashed]  (C4) edge [] node[left] {} (C5);
\draw[-,bend left=0,looseness=1,pos=0.5, dashed]  (C6) edge [] node[left] {} (C7);
\draw[-,bend left=0,looseness=1,pos=0.5, dashed]  (C8) edge [] node[left] {} (C9);

\end{tikzpicture}};
\end{tikzpicture}
\] 

Since $R$ is complete, $\CM(R)$ is a $\tau$-category and the $\tau$-sequences are just the AR-sequences and we can apply Theorem \ref{Thm:ladder} to compute ladders. Consider $M=R \oplus N \oplus \tau N$. Since $\tau N \cong \mf{m}$, the $\tau$-sequence for $R$ is $0 \lra \tau N \lra R$, which is already a right $\add(M)$-almost split map. The $\tau$-sequence for $\tau N$ is $0 \lra N \lra N_1 \lra \tau N \lra 0$. The ladder for $\tau N$ looks as follows: 
\begin{equation} \label{Eq:ladder}
\begin{array}{cccccccccccccc}
\cdots &\xrightarrow{} & Z_{2i}=\tau N_{2i-1} &\xrightarrow{} & Z_{2i}=\tau N_{2i-1} & \xrightarrow{}& \cdots &\xrightarrow{}&\tau N_1 &\xrightarrow{}&N&\xrightarrow{}&0\\
 & &\downarrow^{} &  &\downarrow^{} & & &&\downarrow^{}&&\downarrow^{}&&\downarrow^{}&\\
\cdots &\xrightarrow{} & Y_{2i+1}=N_{2i+1} &\xrightarrow{}  & Y_{2i}=\tau N_{2i}   &\xrightarrow{}& \cdots &\xrightarrow{}&\tau N_2&\xrightarrow{}&N_1&\xrightarrow{}&\tau N.
\end{array}
\end{equation}

%$Y_0=\tau N$, $Y_1=N_1$, $Z_1=N$, $Y_2=\tau N_2$, $Z_2=\tau N_1, \ldots, Y_{2i}=\tau N_{2i}$, $Z_{2i}=\tau N_{2i-1}$, $Y_{2i+1}=N_{2i+1}$ and $Z_{2i+1}=N_{2i}$. 
Here any $Y_i, Z_i \neq 0$ and thus the ladder does not compute a  right $\add(M)$-almost split map for $\tau N$. 
\end{ex}

\begin{Bem}
Note that the underlying AR quivers  of the previous examples are of type $A_\infty$ which is the case for most of curves with infinite $\CM$-type, as explained in \cite{Dieterich85}.
\end{Bem}

\begin{ex}(Nonisolated singularity) \label{Ex:Ainfinity}
Let $R=\C[[x,y]]/(x^2)$, then $R$ is of countable $\CM$-type (cf. \cite{BuchweitzGreuelSchreyer}): the indecomposables are ideals of the form $M_j=(x,y^j), j=0, 1, \ldots$ and $M_\infty \cong k[[y]]$. The AR-quiver is well-known of   type $A_\infty$  and looks as follows:
\[
\begin{tikzpicture}
\node at (4,0) {\begin{tikzpicture} 
\node (C1) at (0,0)  {$R \cong M_0$};
\node (C2) at (1.75,0)  {$M_1$};
\node (C3) at (3.5,0)  {$M_2$};
\node (C4) at (5.25,0)  {$M_3$};
\node (C5) at (7,0)  {$\cdots$};
\node (C6) at (8.75,0)  {$M_{\infty-1}$};
\node (C7) at (10.50,0)  {$M_\infty$ \quad .};

\draw [->,bend left=30,looseness=1,pos=0.5] (C1) to node[]  {} (C2);
\draw [->,bend left=30,looseness=1,pos=0.5] (C2) to node[] {} (C1);
\draw[->]  (C2) edge [in=50,out=-50,loop below ,looseness=10,pos=0.5, dashed] node[left] {} (C2);

\draw [->,bend left=30,looseness=1,pos=0.5] (C2) to node[]  {} (C3);
\draw [->,bend left=30,looseness=1,pos=0.5] (C3) to node[] {} (C2);
\draw[->]  (C3) edge [in=50,out=-50,loop below ,looseness=10,pos=0.5, dashed] node[left] {} (C3);

\draw [->,bend left=30,looseness=1,pos=0.5] (C3) to node[]  {} (C4);
\draw [->,bend left=30,looseness=1,pos=0.5] (C4) to node[] {} (C3);
\draw[->]  (C4) edge [in=50,out=-50,loop below ,looseness=10,pos=0.5, dashed] node[left] {} (C4);

\draw [->,bend left=30,looseness=1,pos=0.5] (C4) to node[]  {} (C5);
\draw [->,bend left=30,looseness=1,pos=0.5] (C5) to node[] {} (C4);

\draw [->,bend left=30,looseness=1,pos=0.5] (C5) to node[]  {} (C6);
\draw [->,bend left=30,looseness=1,pos=0.5] (C6) to node[] {} (C5);
\draw[->]  (C6) edge [in=50,out=-50,loop below ,looseness=10,pos=0.5, dashed] node[left] {} (C6);

\draw [->,bend left=30,looseness=1,pos=0.5] (C6) to node[]  {} (C7);
\draw [->,bend left=30,looseness=1,pos=0.5] (C7) to node[] {} (C6);
\end{tikzpicture}}; 
\end{tikzpicture}
\] 

The fundamental sequence of $R$ is $0 \lra M_1 \lra R$, the AR-sequences are $M_j \lra M_{j-1} \oplus M_{j+1} \lra M_j$ for $j\geq 1$, and $M_{\infty}$ does not have an AR-sequence. Take now $M=M_0 \oplus M_3$. Then one can use ladders to compute a minimal resolution of $S_{M_0}$ over $\End_R M$: the minimal right $\add(M)$-almost split map of $R \cong M_0$ is the fundamental sequence. A ladder yields the $\add(M)$-approximation of $M_1$: $0 \lra M_2 \lra M_0 \oplus M_3 \lra M_1 \lra 0$. Similar the $\add(M)$-approximation of $M_2$ is given as $0 \lra M_1 \lra M_0 \oplus M_3 \lra M_2 \lra 0$. Thus we see that $\Hom_R(M,-)$ into the resulting long exact sequence
$$ \cdots \lra M_0 \oplus M_3 \lra M_0 \oplus M_3 \lra M_0$$
yields that $S_{M_0}$ has infinite projective dimension over $\End_R M$. This example shows that one can sometimes use ladders to compute $\add(M)$-resolutions even though $\CM(R)$ is not a $\tau$-category. However, as in Example \ref{Ex:Kahn} a ladder will not compute a right $\add(M)$- almost split map of $M_3$. Moreover, if one computes the ladder for $M_i$, $i \geq 4$, then $Z_j=\bigoplus_{k=0}^jM_{i-j-1+2k}$ and $Y_j=\bigoplus_{k=0}^j M_{i-j+2k}$ for $j \leq i-2$ and $Z_{j}=\bigoplus_{k=0}^{i-4}M_{j-i+7+2k}$ and $Y_{j}=\bigoplus_{k=0}^{i-4}M_{j-i+8+2k}$ for $j \geq i-3$. Since any $M_j \neq 0$, it follows that $Y_j \neq 0$ and the ladder does not terminate.
\end{ex}

Let now $R$ be as in Thm.~\ref{Thm:ladder}, that is, $R$ is a local Henselian CM ring with canonical module and $\dim R \leq 2$ and $R$ is an isolated singularity. If $R$ is of finite $\CM$-type and $M \in \CM(R)$, then this theorem shows that a ladder computes an $\add(M)$-approximation/$\add(M)$-almost split map for any $X \in \CM(R)$, i.e., in the recursion formula there is some index $N$ such that $Y_n$ and $Z_n$ are $0$ for  $n \geq N$. However, as we have seen in in Examples  \ref{Ex:Kahn} and \ref{Ex:Ainfinity}, this need not hold for infinite $\CM$-type. So an evident question is:

\begin{Qu}
Let $R$ be as in Thm.~\ref{Thm:ladder} and let  $M \in \CM(R)$. Does a ladder compute an $\add(M)$-approximation for any $N \in \CM(R)$ if and only if $R$ is of finite $\CM$-type? 
 \end{Qu}

It is easy to see that if the AR quiver $\Gamma$ of $\CM(R)$ has at least two components $\Gamma_1 \coprod \Gamma_2$ then the question has a positive answer: take e.g.~any indecomposable $M \in \CM(R)$ such that $[M] \in \Gamma_1$. Then the recursion formula of Theorem \ref{Thm:ladder} applied to any indecomposable $N \in \CM(R)$, such that $[N] \in \Gamma_2$, does not compute an $\add(M)$-approximation for $N$. \\
 For infinite type AR quivers, there are structure theorems, see \cite{Dieterich85} and \cite{HappelPreiserRingel} for the case of Artin algebras. However, in general it is not clear, whether the AR-quiver consists of more than one component and how the AR translation acts on the quiver. But we can determine a special case:

\begin{Prop} \label{Prop:infiniteGorenstein}
Let $R$ be as in Thm.~\ref{Thm:ladder} and also assume that $R$ is Gorenstein of Krull-dimension $2$  and of infinite $\CM$-type. Let $M \in \CM(R)$. Then the ladder construction of Thm.~\ref{Thm:ladder} will not yield an $\add(M)$-approximation ($\add(M)$-split map) for  some  $X \in \CM(R)/[\add(M)]$ ($X \in \add(M)$). 
\end{Prop}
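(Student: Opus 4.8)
The plan is to argue that for a two-dimensional Gorenstein isolated singularity of infinite $\CM$-type, the AR quiver of $\underline{\CM}(R)$ is a \emph{stable translation quiver} whose connected components are all infinite, and that the ladder recursion, being built entirely from irreducible maps inside such a component, can never reach the zero module in finitely many steps once it leaves $\add(M)$. First I would recall that since $R$ is Gorenstein, $\CM(R)=\underline{\CM}(R)$ up to the single projective $R$, and $\underline{\CM}(R)$ is a triangulated category with AR triangles; being two-dimensional, it is moreover $1$-Calabi--Yau (or at least $\tau$ is a self-equivalence of finite order composed with a shift), so the stable AR quiver $\Gamma_s$ has no ``boundary'': every vertex has a well-defined $\tau$ and $\tau^{-1}$. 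By Riedtmann-type structure theory for stable translation quivers (cf.\ \cite{HappelPreiserRingel}), each connected component of $\Gamma_s$ is of the form $\mathbb{Z}\Delta/G$ for a quiver $\Delta$ without multiple arrows and an admissible group $G$; since $R$ has infinite $\CM$-type, at least one such component is infinite (indeed $\Delta$ itself, or the orbit quotient, must be infinite, since a finite component of a stable translation quiver forces finite type on that block).

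The key observation then is the following. Fix $M\in\CM(R)$ and $X\in\CM(R)$ with $X\notin\add(M)$ (the case $X\in\add(M)$ being handled by looking one step further along the ladder). Running the recursion \eqref{Eq:recursion1}--\eqref{Eq:recursion2} in $\mc{D}=\CM(R)/[\add(M)]$, the terms $Y_n$ are obtained from the AR translates and middle terms of the AR sequences, i.e.\ the $Y_n$ live in the mesh neighbourhood of $X$ inside $\Gamma_s$. Since every AR sequence in $\CM(R)$ has nonzero middle term (the singularity is not regular, so no indecomposable is both projective and injective in the relevant sense, and the middle term of an AR sequence $0\to\tau X\to\vartheta X\to X\to 0$ is always nonzero for $X$ non-free), and since by Lemma~\ref{Lem:Homfinitelength} finiteness of $\mc{D}$ is equivalent to $\CM$-finiteness, the hypothesis of infinite type guarantees $\mc{D}$ is infinite. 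The crucial point is that the ladder ``walks'' through a fixed infinite component: I would show by induction on $n$ that, as long as $Y_{n-1}\neq 0$, one has $Y_n\neq 0$, by checking that $\vartheta Y_{n-1}$ always has an indecomposable summand lying in the same connected component of $\Gamma_s$ as $Y_{n-1}$ and \emph{not} in $\add(M)$ — here one uses that a connected infinite stable translation quiver has no vertex all of whose mesh-neighbours lie in a fixed finite set (namely the indecomposable summands of $M$), because otherwise iterating the mesh relations would confine an infinite component to a finite set, a contradiction. Thus $Y_n\neq 0$ for all $n$, the ladder does not terminate, and by Theorem~\ref{Thm:ladder} (specifically the computation \eqref{Eq:exacttau}, where the map $f|_{Y_n}$ lands in ${\bf J}^n_{\CM(R)}$ but $Y_n$ never vanishes) no finite stage gives an exact sequence of the form \eqref{Eq:exactapprox}; hence the ladder fails to produce an $\add(M)$-approximation (resp.\ $\add(M)$-almost split map).

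To make the component argument precise I would invoke that in the Gorenstein dimension-two case $\tau$ has the shape $\tau = \Omega^{-2}\circ(-\otimes\omega_R)$ on $\underline{\CM}(R)$, so $\tau$ permutes the components of $\Gamma_s$ and each component, being $\tau$-stable or finitely-orbit-stable, is a full subquiver closed under the mesh operation; any attempt to ``exit'' an infinite component along irreducible maps is impossible, so the sequence of mesh-expansions defining the $Y_n$ stays in one infinite component forever. The only subtlety is the bookkeeping of summands that happen to fall in $\add(M)$: these get absorbed into the $P_n$ and removed, but since $\add(M)$ contributes only finitely many indecomposables and the component is infinite, the ``residual'' part $(\vartheta Y_n - Z'_n)_+ - M_\mc{C}(\cdots)$ is nonzero infinitely often, which is exactly $Y_{n+1}\neq 0$.

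\textbf{Main obstacle.} The hard part is the structure-theoretic input: showing rigorously that \emph{every} component of the stable AR quiver of a two-dimensional Gorenstein isolated singularity of infinite $\CM$-type is infinite, and that the ladder's $Y_n$-terms are genuinely trapped in one such component rather than eventually hitting a summand in $\add(M)$ at every position. For this one wants either a clean appeal to the classification of translation quivers of infinite type (tubes and $\mathbb{Z}A_\infty$, $\mathbb{Z}D_\infty$, $\mathbb{Z}A_\infty^\infty$ shapes, as in \cite{Dieterich85,HappelPreiserRingel}) together with the fact that $1$-Calabi--Yau forces a uniform bound on mesh-neighbourhoods, or a direct argument that $\dim_k\underline{\Hom}(M,X)<\infty$ (Lemma~\ref{Lem:Homfinitelength}) combined with $\CM$-infiniteness forces infinitely many nonzero $Y_n$. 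I expect the cleanest route is the latter: assume for contradiction the ladder terminates, so by Theorem~\ref{Thm:ladder} every $X$ admits a finite $\add(M)$-resolution; deduce that $\add(M)$ is a cluster-tilting-like subcategory forcing $\CM(R)$ to have finite type, contradicting the hypothesis — this reduces the whole Proposition to the termination-implies-finite-type direction of the Question stated just above, for which the Gorenstein surface hypothesis supplies the missing Serre-duality symmetry.
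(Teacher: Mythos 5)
Your proposal identifies the right strategic ingredients (Serre duality making $\underline{\CM}(R)$ one-Calabi--Yau, the Brauer--Thrall unboundedness, the Happel--Preiser--Ringel/Riedtmann structure theory), and correctly intuits that the argument must show the ladder's $Y_n$-terms cannot all vanish; but the inductive step as you state it has a genuine gap, and the paper's actual argument is more specific and sidesteps exactly the difficulty you acknowledge.

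The gap: you propose to show ``as long as $Y_{n-1}\neq 0$, one has $Y_n\neq 0$, by checking that $\vartheta Y_{n-1}$ always has an indecomposable summand lying in the same connected component of $\Gamma_s$ as $Y_{n-1}$ and not in $\add(M)$.'' But in the recursion \eqref{Eq:recursion1}, one has $Y_{n+1} = (\vartheta Y_n - Z'_n)_+ - M_\mc{C}\bigl((\vartheta Y_n - Z'_n)_+\bigr)$, so the middle term $\vartheta Y_n$ is first corrected by subtracting $Z'_n = \tau Y_{n-1}$ in $K_0$, and only the positive part of that difference (with the $\add(M)$-summands stripped) becomes $Y_{n+1}$. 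It is entirely possible, in an abstract infinite stable translation quiver, for the subtraction to annihilate what remains of $\vartheta Y_n$ outside $\add(M)$, so ``$\vartheta Y_n$ has a summand outside $\add(M)$'' does not in itself yield $Y_{n+1}\neq 0$. Your appeal to ``a connected infinite stable translation quiver has no vertex all of whose mesh-neighbours lie in a fixed finite set'' addresses the wrong quantity: what must be controlled is the mesh \emph{difference}, not the mesh neighbourhood. Your fall-back (``assume the ladder terminates, deduce finite type'') is not a proof but a restatement of the Question preceding the Proposition.

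The paper's proof is more concrete and avoids this. It first uses that in the Gorenstein, Krull dimension $2$ case the AR-translation is the \emph{identity}, $\tau M \cong M$ (you note the $1$-Calabi--Yau property but do not draw this sharper conclusion). Combined with the Brauer--Thrall result on unbounded ranks of indecomposables (\cite{Yoshino87}, Prop.~4.1), this forces the stable AR quiver to contain a component of type $A_\infty$ (via the argument of loc.\ cit.\ or \cite{Dieterich85}). In an $A_\infty$-component one can compute the ladder terms explicitly, exactly as in Example~\ref{Ex:Ainfinity}: the $Y_n$ and $Z_n$ grow as direct sums that strictly increase in rank, so the recursion's subtraction never empties them, and the ladder does not terminate. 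In short: the paper reduces to a concrete quiver shape where the non-termination is verifiable by hand, while your proposal tries to run the induction in an arbitrary infinite component without controlling the cancellation in the recursion, which is precisely the step you flag as the ``main obstacle'' and leave open.
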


\begin{proof}
We show that  there always exists a component of the stable AR-quiver of type $A_\infty$. Similar to  Example \ref{Ex:Ainfinity}, one sees that if a component of the stable AR-quiver of $R$ is of $A_\infty$-type,  one can find a direct summand of $M$ such that its ladder does not terminate. \\
If $R$ is Gorenstein of Krull-dimension $d$, then the AR-translation $\tau$ of any $M \in \underline{\CM}(R)$ is given by $\tau M=\Hom_R(\mathrm{syz}_d(\mathrm{tr} M), R)$, where $\mathrm{tr}M$ denotes the Auslander-transpose of $M$. For $d=2$ it follows that $\tau M \cong M$ (from e.g., \cite{Auslander86}, Cor. 6.2.).  \\
The ranks of $\CM(R)$-modules are unbounded by the Brauer--Thrall theorem (see \cite{Yoshino87}, Prop.~4.1). Then arguing as in the proof of  loc.~cit, Prop.~4.1 or from Theorem 3 of \cite{Dieterich85}, it follows that the stable AR-quiver is of type $A_\infty$.
\end{proof}

\begin{Bem}
For $\dim R=1$ the translation $\tau$ is in general not the identity, even in the Gorenstein case: one has $\tau M \cong \mathrm{syz}_1 M$ (see Lemma (9.8) of \cite{Yoshino90}). Moreover, in order to apply structure theorems as Dieterich's \cite{Dieterich85}, one needs the existence of a periodic $\tau$-orbit. It would be interesting to study questions about existence and cardinality of periodic $\tau$-orbits for non-Gorenstein rings.
\end{Bem}

%%%%%%%%
\section{Centres of endomorphism rings} \label{Sec:centre}
%%%%%%%%%

Here we determine the centres of the endomorphism rings of $\CM$ (i.e., torsion free) modules over curve singularities of finite $\CM$-type. This helps to speed up the determination of global spectra: if the centre of an endomorphism ring of an $R$-module is an overring of $R$, then in some cases   one already knows the global spectrum of this ring and does not have to compute projective resolutions again. 

\begin{defi}(see \cite{Yoshino90})
Let $(R, \mf{m}, k)$ be a one-dimensional local analytic $k$-algebra, with $k$ of characteristic $0$ and suppose that $R$ is reduced. Denote by $\widetilde R$ the integral closure of $R$ in its quotient ring. A ring $R'$ \emph{(birationally) dominates} $R$ if 
$$R \subseteq R' \subseteq \widetilde R.$$
\end{defi}

\begin{Lem} \label{Lem:gsOverring}
Let $R \subseteq R'$ be a ring birationally dominating $R$. Then $\gs R' \subseteq \gs R$.
\end{Lem}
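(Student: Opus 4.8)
The plan is to show that restriction of scalars along $R\hookrightarrow R'$ turns every witness for $\gs R'$ into a witness for $\gs R$. Concretely, let $n\in\gs R'$ and choose $N'\in\CM(R')$ with $N'\neq 0$ and $\gl\End_{R'}(N')=n<\infty$. I will prove that $N'$, viewed as an $R$-module, lies in $\CM(R)$ and satisfies $\End_R(N')=\End_{R'}(N')$ as rings; since global dimension depends only on the ring, this gives $n\in\gs R$, and letting $n$ vary yields $\gs R'\subseteq\gs R$.

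\emph{Step 1: $N'\in\CM(R)$.} Since $R$ is a (reduced, one-dimensional) analytic $k$-algebra, $\widetilde R$ is a finite $R$-module, and as $R'\subseteq\widetilde R$ the overring $R'$ is module-finite over $R$; hence $N'$ is a finitely generated $R$-module. Every nonzerodivisor of $R$ is a unit in $Q(R)$, hence a nonzerodivisor of the subring $R'\subseteq Q(R)$, hence acts injectively on the torsion-free $R'$-module $N'$ (recall $R'$ is reduced and one-dimensional, so $\CM(R')$ equals the category of finitely generated torsion-free $R'$-modules). Thus $N'$ is torsion-free over $R$, and over the one-dimensional reduced Cohen--Macaulay ring $R$ this means $N'\in\CM(R)$. (Equivalently, $\depth_R N'=\depth_{R'}N'=1=\dim R$.)

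\emph{Step 2: $\End_R(N')=\End_{R'}(N')$.} The inclusion $\End_{R'}(N')\subseteq\End_R(N')$ is trivial. Conversely, let $\psi\colon N'\to N'$ be $R$-linear, let $r'\in R'$ and $n\in N'$. Write $r'=a/b$ with $a,b\in R$ and $b$ a nonzerodivisor of $R$ — possible because $R'\subseteq Q(R)$ and $Q(R)$ is the localization of $R$ at its nonzerodivisors; then $b(r'n)=an$ in $N'$, so
\[
b\,\psi(r'n)\;=\;\psi\bigl(b\,(r'n)\bigr)\;=\;\psi(an)\;=\;a\,\psi(n)\;=\;b\,\bigl(r'\,\psi(n)\bigr).
\]
Since $b$ is a nonzerodivisor on the torsion-free module $N'$, this forces $\psi(r'n)=r'\psi(n)$, i.e.\ $\psi$ is $R'$-linear. (The same division trick shows $\Hom_R(X,Y)=\Hom_{R'}(X,Y)$ for all $X,Y\in\CM(R')$, but only the case $X=Y=N'$ is needed here.)

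Combining the two steps, $\End_R(N')$ and $\End_{R'}(N')$ are literally the same ring, so $\gl\End_R(N')=n$; since $N'\in\CM(R)$ is nonzero, $n\in\gs R$. I do not expect a genuine obstacle: the only two points that require care are that $N'$ remains maximal Cohen--Macaulay when regarded over the smaller ring (Step 1), and the ``division'' $r'=a/b$ in Step 2, which is exactly the place where the hypothesis that $R'$ \emph{birationally} dominates $R$ (that is, $R'\subseteq Q(R)$) is used.
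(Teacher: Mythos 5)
Your proof is correct and takes exactly the same route as the paper's one-line argument, namely restriction of scalars along $R\hookrightarrow R'$ combined with the identification $\End_R(N')=\End_{R'}(N')$. The paper simply asserts this isomorphism and implicitly that $N'\in\CM(R)$; your Steps~1 and~2 supply the (standard but worth recording) verifications, with the torsion-freeness over $R$ and the birational hypothesis $R'\subseteq Q(R)$ being used in precisely the right places.
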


\begin{proof}
Take any $\CM(R')$-module $M$. Then $\End_{R'} M \cong \End_R M$ and thus if $\gl \End_{R'} M =d$, it follows that $d \in \gs R$.
\end{proof}

\begin{Thm}  \label{Thm:centres}
Let $R$ be reduced noetherian ring, and $M$ be a faithful torsion free module over $R$. Then the centre of $\End_R M$ is the largest finite extension of $R$, which is an $R$-algebra and over which $M$ is a module (i.e., the largest ring $S$ such that $R \subseteq S \subseteq Q(R)$ and $SM=M$).
\end{Thm}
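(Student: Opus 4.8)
The plan is to prove two containments. Write $A = \End_R M$, $Z = Z(A)$, and let $S = \{q \in Q(R) : qM \subseteq M\}$, which is a subring of $Q(R)$ containing $R$; it is the largest subring of $Q(R)$ over which $M$ is a module, and it is module-finite over $R$ because $M$ is a finitely generated faithful torsion-free $R$-module (so $S \subseteq \End_R M$ via $q \mapsto (m \mapsto qm)$, and $\End_R M$ is module-finite over $R$). First I would set up the natural map $S \to A$ sending $q$ to multiplication by $q$; this is injective since $M$ is faithful, and its image lands in $Z$ because multiplication by an element of $Q(R)$ commutes with every $R$-linear endomorphism of $M$ (both are computed inside $Q(R) \otimes_R M$, where $M$ embeds because $M$ is torsion free). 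This gives $S \subseteq Z$.

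For the reverse containment $Z \subseteq S$, the key point is that $Z$ acts on $M$ by $R$-linear endomorphisms that also commute with all of $A$, and one wants to identify such endomorphisms with multiplications by elements of $Q(R)$. First I would note that $M$ is faithful and torsion free, so $M \hookrightarrow Q(R) \otimes_R M =: V$, a free $Q(R)$-module (as $Q(R)$ is a finite product of fields), and $\End_{Q(R)} V \cong M_n(Q(R))$ where $n = \dim V$. The central element $z \in Z \subseteq A$ extends to an endomorphism $\tilde z$ of $V$ commuting with the image of $A$ in $\End_{Q(R)} V$; since $A$ contains $R$ (acting diagonally) and $M$ spans $V$ over $Q(R)$, the $Q(R)$-algebra generated by the image of $A$ acts on each indecomposable (i.e., each simple factor-field component) of $V$ — I would argue that on each component $V$ is a faithful module over the image algebra, and hence by a density/double-centralizer argument the commutant of the image of $A$ in $\End_{Q(R)} V$ is exactly the scalars $Q(R)$ (here one uses that $M$, hence $V$, generates enough of $\End$ — concretely, that for a faithful torsion-free module the image of $\End_R M$ in $\End_{Q(R)} V$ has commutant equal to $Q(R)$ componentwise). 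Therefore $\tilde z$ is multiplication by some $q \in Q(R)$, and since $\tilde z$ preserves $M \subseteq V$, we get $qM \subseteq M$, i.e. $q \in S$; as $z$ is recovered as multiplication by $q$ on $M$, this shows $Z \subseteq S$.

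I expect the main obstacle to be the double-centralizer step: justifying that the commutant of the image of $\End_R M$ inside $\End_{Q(R)}(Q(R)\otimes_R M)$ reduces to $Q(R)$ acting as scalars. This requires being careful that $M$ faithful and torsion free really does force $Q(R)\otimes_R M$ to be a faithful module over the $Q(R)$-span of the image of $\End_R M$ on each component of $Q(R)$ (a product of fields), and then invoking the classical Jacobson density theorem (or the double centralizer theorem for simple modules) componentwise. An alternative, perhaps cleaner route for this step: localize at each minimal prime $\mathfrak{p}$ of $R$; then $R_\mathfrak{p}$ is a field, $M_\mathfrak{p}$ is a nonzero finite-dimensional vector space, $(\End_R M)_\mathfrak{p} = \End_{R_\mathfrak{p}} M_\mathfrak{p}$ is a full matrix algebra with centre $R_\mathfrak{p}$, and one checks directly that the centre of $\End_R M$ injects into $\prod_\mathfrak{p} R_\mathfrak{p} = Q(R)$ with image exactly $S$; the faithfulness of $M$ ensures this injection is detected on the union of the minimal primes, and torsion-freeness ensures $\End_R M \hookrightarrow \prod_\mathfrak{p} \End_{R_\mathfrak{p}} M_\mathfrak{p}$. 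I would most likely write up the localization argument as the main line, using the first paragraph's inclusion $S \subseteq Z$ as the easy half.
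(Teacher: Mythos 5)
Your proof is correct and takes essentially the same route as the paper: both pass to $Q(R)\otimes_R M$ (equivalently, localize at the minimal primes), use that $\End_R M\otimes_R Q(R)=\End_{Q(R)}(Q(R)\otimes_R M)$ has centre $Q(R)$ because $M$ is faithful and $Q(R)$ is a finite product of fields, and conclude $Z(\End_R M)=\{q\in Q(R):qM\subseteq M\}$. You supply more detail than the paper, which compresses the key step into ``it follows that $Z(\End_R(M))=Q\cap\End_R(M)$''; of your two sketches, the localization argument is the one to write up and it matches the paper's logic exactly (a minor quibble: $Q(R)\otimes_R M$ need not be \emph{free} over $Q(R)$, only finitely generated with nonzero components, but this does not affect the argument).
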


\begin{proof}
Let $Q=Q(R)$ be the quotient ring of $R$.
Suppose that $M$ is torsion free. We have the following diagram:
\begin{equation*}
\begin{tikzpicture}[description/.style={fill=white,inner sep=2pt}]    \label{Diag:trace1}
    \matrix (m) [matrix of math nodes, row sep=3em,
    column sep=2.5em, text height=1.5ex, text depth=0.25ex]
    { R &   \End_R M \\
 Q &   \End_R(M) \otimes_R Q=\End_Q(M \otimes Q) \\ };
            \path[->] (m-1-1) edge (m-1-2);
             \path[->] (m-2-1) edge (m-2-2);
             \path[right hook->] (m-1-1) edge (m-2-1);
             \path[left hook->] (m-1-2) edge (m-2-2);
      \end{tikzpicture}
             \end{equation*}
 From this it follows that
 $$Z(\End_R(M))=Q \cap \End_R(M).$$

Consider $R'= Q \cap \End_RM$. Then $R'$ is clearly contained in $\End_R M$, and since the latter is finitely generated as an $R$-module, also $R'$ is finitely generated. From this and the fact that $R' \subseteq Q$ it follows that $R' \subseteq \widetilde R$.
Thus $Z(\End_RM)$ is contained in $\widetilde R$. \\
Let $R \subset S \subset \widetilde R$ be a ring such that $M$ is a module over $S$. Then there is a map from $S$ to $\End_RM = \End_S M$ and $S \subseteq Z(\End_R M)=Z$, so also the largest $S$ with this property is contained in $Z$. On the other hand, $M$ is a module over the centre of its endomorphism ring, so $Z$ has to be contained in the largest integral extension $S$ such that $M$ is an $S$-module. \end{proof}

If $R$ is irreducible, then the rings dominating $R$ are totally ordered: $R \subseteq R_1 \subseteq \cdots \subseteq R_n \cong \widetilde R$.   The maximal element of this chain is always the normalization. In this case, if $M=\bigoplus_{i=1}^k M_i$ is a direct sum of indecomposable torsion-free $R$-modules, then $Z(\End_R(M))=\max_i \{ R_i:$ each $M_j$ is a module over $R_i\}$. If $R$ is reducible one only has a partial order with index set $I$ so $R \subseteq R_i \subseteq R_j$, for 
$i \leq j  \in I$,
%$i_1=1, \ldots, n_1$, $R_1^{(i_1)} \subseteq R_{2,i_1}^{(i_2)}$, $i_2=1, \ldots%, n_{2,i_1}$, $R_{2,i_1}^{(i_2)} \subseteq R_{3,i_2,i_1}^{(i_3)}$ etc. 
then still  $Z(\End_R(M))=\prod \max \{ R_i :$ each $M_j$ is a module over $R_i\}$. 
 \\

Before we compute the centres for the curve singularities of finite $\CM$-type, a few general observations: First we have to determine the rings dominating $R$, that is, the rings $R'$, satisfying $R \subseteq R' \subseteq \widetilde R$. If $R$ is a ADE curve singularity, then by \cite[Satz 1]{GreuelKnoerrer} there are only finitely many such $R'$. Moreover, an indecomposable $\CM(R)$-module can only be isomorphic to an overring of $R$ if it has rank $1$ on $R$. If  $R=k\{x,y\}/f_1 \cdots f_k$ is reducible, one has to consider a rank vector $(a_1, \ldots, a_k)$, where $a_i$ denotes the rank of a module $N$ on the component $R/(f_i)$. So by rank $1$ module $N$  in this case we mean rank vector with $a_i=0$ or $1$ for all $i$ and at least one $a_i \neq 0$. If we have a ring $R'$ dominating $R$ so $R \subseteq R' \subset Q(R)$, then since the rank vectors of both $R$ and $Q(R)$ are $(1, \ldots, 1)$ we can can conclude that the rank of $R'$ is also $(1,\ldots,1)$.  One can compute the rank of a $\CM$-module $N$ by using that the rank of $N$ on each component $R/(f_i)$ is equal to the power with which $f_i$ appears in the determinant of the matrix factorization of $N$, see \cite{Eisenbud80}. \\
If $R' \supseteq R$ is a ring dominating $R$, then one can use trace ideals to determine whether a $\CM(R)$-module $N$ is still defined over $R'$: by definition, the trace ideal $\tr_R(N)$ of an $R$-module $N$  is given as the image of $\tr: N^*\otimes_{\End_R N}N \rightarrow   R$, where $\tr(f \otimes n)=f(n)$, see e.g., \cite{AuslanderGoldman}. One can show that $\tr_R(N)$ is the ideal generated by the entries of the matrix factorization belonging to $\mathrm{syz}(N)$, see \cite{Vasconcelos98}. One can show that if $\tr_R(N) \not \subseteq \tr_R(R') \cong \mc{C}_{R'/R}$, then $N$ is not a module over $R'$. Here $\mc{C}_{R'/R}$ denotes the conductor of $R'$ into $R$. 
%\cite{BF} we know that the conductor of $M_2$ in $R$ is equal to $\tr (M_2)$. The latter can be computed as the ideal of the entries of the Matrix of $N_2$, which is $(x,y^2)$. By the same Proposition, if for an $\CM(R)$-modules $N$ one has $\tr (N) \not \subseteq \mc{C}_{R'/R}$ for an overring $R'$ of $R$, then $N$ is not a module over $R'$. In our case, computing the traces of the two candidates, one sees $\tr C= \mf{m}$ and $\tr Y_2=(x,y^2)$. Thus $C$ cannot be a module over the ring $M_2$ and the only possibility left is $Y_2$.

If $M$ does not have full support, then one can still determine the centre of $\End_RM$ with the following lemma, whose proof is straightforward:

\begin{Lem} \label{Lem:support}
Let $R$ be a commutative ring, let $M,N$ be $R$-modules and let $I$ be an ideal in $R$ satisfying $I \subseteq \mathrm{Ann}_R(M)$ and $I  \subseteq \mathrm{Ann}_R(N)$. Then $\Hom_R(M,N)=\Hom_{R/I}(M,N)$. \\
In particular, if $R=k\{x,y\}/(f_1 \cdots f_k)$ and $M$ is an $R$-module annihilated by $f_1 \cdots f_l$ for some $l < k$, then $\End_RM=\End_{R/(f_1 \cdots f_l)}(M)$.
\end{Lem}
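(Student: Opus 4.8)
The plan is to prove the general statement by verifying the two inclusions $\Hom_{R/I}(M,N)\subseteq \Hom_R(M,N)$ and $\Hom_R(M,N)\subseteq \Hom_{R/I}(M,N)$, viewed inside the group $\Hom_\Z(M,N)$ of additive maps, and then to note that the resulting identification respects the $R/I$-module structures on both sides. The starting observation is that, since $I\subseteq \mathrm{Ann}_R(M)$ and $I\subseteq\mathrm{Ann}_R(N)$, both $M$ and $N$ acquire a well-defined $R/I$-module structure via $(\bar r)\cdot m:=rm$, and this structure is compatible with the original $R$-action through the quotient map $\pi\colon R\to R/I$, i.e.\ $rm=\pi(r)\cdot m$ for all $r\in R$, $m\in M$ (and likewise for $N$). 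Consequently any $R/I$-linear map is automatically $R$-linear (precompose the scalar action with $\pi$), which gives the first inclusion.

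For the reverse inclusion I would take an arbitrary $R$-linear map $\varphi\colon M\to N$ and check directly that it is $R/I$-linear: additivity is untouched, and for $\bar r\in R/I$ and $m\in M$ one has $\varphi(\bar r\cdot m)=\varphi(rm)=r\,\varphi(m)=\bar r\cdot\varphi(m)$, where the middle equality is $R$-linearity of $\varphi$ and the outer ones are the definitions of the $R/I$-actions on $M$ and $N$. Since none of these manipulations alters the underlying abelian group, we obtain $\Hom_R(M,N)=\Hom_{R/I}(M,N)$ as $R/I$-modules, proving the first assertion.

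For the ``in particular'' clause I would specialize to $N=M$ and to $R=k\{x,y\}/(f_1\cdots f_k)$ with $I$ the ideal generated by the image of $f_1\cdots f_l$; by hypothesis $f_1\cdots f_l$ annihilates $M$, so $I\subseteq\mathrm{Ann}_R(M)$ and the general statement gives $\End_R M=\Hom_R(M,M)=\Hom_{R/I}(M,M)=\End_{R/I}(M)$. It then remains to identify $R/I$ with $k\{x,y\}/(f_1\cdots f_l)$, which follows because $(f_1\cdots f_k)\subseteq(f_1\cdots f_l)$ in $k\{x,y\}$ (as $l<k$), so the iterated quotient $(k\{x,y\}/(f_1\cdots f_k))/I$ is canonically $k\{x,y\}/(f_1\cdots f_l)$. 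I do not anticipate any genuine obstacle: the only points needing care are the well-definedness of the $R/I$-actions — which is exactly the annihilation hypothesis — and keeping track of which quotient ring is meant in the last identification; as the paper already remarks, the proof is a routine unwinding of definitions.
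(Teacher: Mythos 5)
Your proof is correct and is exactly the routine argument the paper alludes to when it calls the proof ``straightforward'' (the paper itself gives no proof): both modules inherit $R/I$-structures from the annihilation hypothesis, $R$-linearity and $R/I$-linearity then coincide on the nose, and the ``in particular'' clause follows by taking $N=M$, $I=(\overline{f_1\cdots f_l})$, and the canonical identification $R/I\cong k\{x,y\}/(f_1\cdots f_l)$.
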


\subsection{Irreducible ADE curves}

\begin{ex} $A_n$, $n$ even: Here $R=k\{x,y\}/(y^2+x^{n+1})$. Any basic $\CM$-module is of the form $\bigoplus_{i \in I}I_i$, where $I \subseteq \{1, \ldots, n\}$. Denote by $i_0$ the minimal integer contained in $I$. Since $I_{i_0} \cong \End_R(I_{i_0}) \cong k\{x,y\}/(y^2+x^{n-2i_0})$ is dominating $R$ and is also the largest integral extension over which $I_{i_0}$ is a module, it follows that $Z(\End_R \bigoplus_{i\in I}I_i)\cong I_{i_0} \cong k\{x,y\}/(y^2+x^{n-2i_0})$.
\end{ex}

\begin{ex} $E_6$: Here $R=k\{x,y\}/(x^3+y^4)$. 
We first compute the ranks of the indecomposable torsion-free modules over $R$ from Yoshino's list of matrix factorizations, see \cite{Yoshino90}, p.79: 
the indecomposable $\CM$ modules of rank one over $R$ are $R, M_1, N_1, M_2, B$  Since for an overring $R \subseteq R'$ one must have $\End_R R' \cong R'$ it follows from $\End_R N_1 \cong M_1$ that $N_1$ is not isomorphic to an overring of $R$. Thus there are two rings dominating $R$:
$$R \subseteq  M_1 \subseteq  M_2 \subseteq  \widetilde R \cong B.$$
The following table shows a list of the indecomposables, their trace ideals, the largest ring over which they are defined and their singularity type (if they are isomorphic to a ring -- see \cite{FruehbisKrueger} for classification of space curve singularities):
 \begin{table}[h!]
  \centering
  \begin{tabular}{ |c|c|c|c|c|}
    \hline
     Module & rank & trace & largest ring & singularity type \\ 
   \hline \hline 
    $R$ & $1$ & $(1)$ & $R$ & $D_n$ \\ 
       $M_1$ & $1$ & $(x,y)$ & $M_1$ & $E_6(1)$ \\ 
    $N_1$ & $1$  & $(x,y)$ & $M_1$ & - \\ 
    $A$ & $2$ & $(x,y)$ & $M_1$ & - \\ 
     $B$ & $1$ & $(x^2,y^2,xy)$ &$B$ & smooth \\ 
      $X$ & $2$ & $(x,y)$ & $M_1$ & - \\ 
    $M_2$ & $1$ & $(x,y^2)$ & $M_2$ & $A_{2}$ \\ 
    \hline
  \end{tabular}
  \vspace{0.1cm}
  \caption{Indecomposable $\CM$-modules over $R$ of type $E_6$.}
  \label{tab:E6}
\end{table} 

From the trace ideals we get the following list of modules living over each ring: \\
(1) $R$: all modules live over $R$. \\
(2) $M_1$: 
all indecomposables but $R$ are modules over $M_1$. \\
(3) $M_2$ (i.e., the $A_2$-singularity): $M_2$, $B$. \\
(4) $B$ (i.e., the normalization, a line): $B$. 
\end{ex}

\begin{ex} $E_8$: Let us first determine the rings dominating $R=k\{x,y\}/(x^3+y^5)$. Since $R$ is irreducible, one can determine the overrings from the semigroup of $R$ and then compare ranks with the indecomposable $\CM$-modules in Yoshino's list: the semigroup of $R$ is generated by $1, t^3,t^5$. Thus monomial rings between $R$ and its normalization are the ones with semigroup generated by $1, t^3,t^5, t^7$ (this is $E_8(1)$ in the notation of \cite{FruehbisKrueger}, the endomorphism ring of the maximal ideal), by $1, t^3, t^4, t^5$ (this is $E_6(1)$, which has already appeared in the previous example), by $1, t^2, t^3$ (this is the $A_2$-singularity) and finally by $1,t$, corresponding to the normalization. \\
{\bf Claim.} These are all possible rings dominating $R$: \\
{\it Proof of claim:} If $R'$ is dominating $R$, it has to have rank one over $R$. Moreover one has $\End_R R' \cong R'$. From Yoshino's list one sees that the rank one modules over $R$ are: $M_1$, $N_1$, $M_2$, $N_2$ and $A_2$. In the normalization chain we have seen that $\End_R \mf{m} \cong M_1$. Taking the endomorphism ring of the maximal ideal of $M_1$, the second ring is a cusp, isomorphic to the module $A_1$ and the third ring in the chain is the normalization, isomorphic to $A_2$. By computing endomorphism rings, we find that $\End_R N_1 \cong M_1$ and $\End_R N_2 \cong M_2$, so $N_1$ and $N_2$ are not isomorphic to overrings of $R$. In order to see that $M_2$ is isomorphic to the ring $R'=k\{t^3,t^4,t^5\}$, one can compute its minimal projective resolution and compare it to the projective resolution of the ideal $(x,y^3)$ in $R$, which is isomorphic to $R'$ (as an $R$-module). This exhausts all possibilities. \qed \\
So we obtain the following list of rings dominating $E_8$:
$$R \subseteq M_1 \subseteq M_2 \subseteq A_1 \subseteq A_2$$
Now from the trace ideals we get a list of the modules defined over the rings dominating $R$: \\
 (1) $R$: all modules live over $R$.\\
 (2) $M_1$ (i.e., $E_8(1)$, the singular space curve $k\{t^3,t^5,t^7\}$): all modules but $R$.  \\
 (3) $M_2$ (i.e., $E_6(1)$, the singular cubic $k\{t^3,t^4,t^5\}$): $M_2$, $N_2$, $C_2$, $Y_2$, $A_1$, $A_2$ (look at trace ideals!) \\
 (4) $A_1$ (i.e., the cusp $k\{t^2,t^3\}$): $A_1$, $A_2$. \\
 (5) $A_2$ (i.e., the normalization $k\{t\}$): $A_2$.
 \end{ex}
 
\subsection{Reducible ADE curves}
 
 \begin{ex} \label{Ex:Dnodd} $D_n$, $n$ odd: The $D_n$ singularity has coordinate ring $R=k\{x,y\}/(x^2y+y^{n-1})$. Geometrically, it is the union of the $A_{n-1}$-singularity $x^2+y^{n-2}=0$ with the line $y=0$. The module $B$ and all the $M_i$ are of rank $(0,1)$ and the rank of the smooth component $A$ is $(1,0)$. The ranks of $X_i$ and $Y_i$ are $(1,1)$. Geometrically, $M_i$ is isomorphic to an $A_{n-2i-3}$-singularity (including $M_{\frac{n-3}{2}}$, which is smooth) and the $X_i$ are $A_{n-2i-1} \vee L$ singularities, whereas the $Y_i$ are isomorphic to the respective canonical modules of the curves $X_i$. 
 Here is a table showing the ranks of the modules, their trace ideals, the largest overring dominating $R$ and (if they are isomorphic to a ring) the type of singularity:
 \begin{table}[h!]
  \centering
  \begin{tabular}{ |c|c|c|c|c|}
    \hline
     Module & rank & trace & largest ring & singularity type \\ 
   \hline \hline 
    $R$ & $(1,1)$ & $(1)$ & $R$ & $D_n$ \\ 
       $X_i$, $1 \leq i \leq \frac{n-1}{2}$ & $(1,1)$ & $(x,y^i)$ & $X_i$ & $A_{n-2i-1} \vee L$ \\ 
    $Y_i$, $1 \leq i \leq \frac{n-3}{2}$ & $(1,1)$ & $(x,y^i)$ & $X_i$ & - \\ 
    $M_i$, $1 \leq i \leq \frac{n-3}{2}$ & $(0,1)$ & $(xy,y^{i+1})$ & $A \oplus M_i$ & $A_{n-2i-3}$ \\ 
     $N_i$, $1 \leq i \leq \frac{n-3}{2}$ & $(2,1)$ & $(x,y^i)$ & $X_i$ & - \\ 
      $A$ & $(1,0)$ & $(x^2+y^{n-2})$ & $A \oplus M_{\frac{n-3}{2}}$ & smooth \\ 
    $B$ & $(0,1)$ & $(y)$ & $A \oplus B$ & $A_{n-3}$ \\ 
    \hline
  \end{tabular}
  \vspace{0.1cm}
  \caption{Indecomposable $\CM$-modules over $R$ of type $D_n$, $n$ odd.}
  \label{tab:Dnodd}
\end{table} 

Thus the overrings of $R$ are:
 $$R, A \oplus B, A \oplus M_i \text{ for } 1 \leq i \leq \frac{n-3}{2}, X_i \text{ for } 1 \leq i \leq \frac{n-1}{2}.$$
 By computing the trace ideals, one obtains the partial orders for the overrings:
\begin{equation*}
\begin{tikzpicture}[description/.style={fill=white,inner sep=2pt}]    \label{Diag:trace1}
    \matrix (m) [matrix of math nodes, row sep=3em,
    column sep=2.5em, text height=1.5ex, text depth=0.25ex]
    { R &   X_1 & X_2 & X_3 & \cdots & \phantom{a}X_{\frac{n-1}{2}} \\
       & A\oplus B  & A \oplus M_1 & A \oplus M_2 & \cdots & A \oplus M_{\frac{n-3}{2}} \\ };
            \path[right hook->] (m-1-1) edge (m-1-2);
             \path[right hook->] (m-1-2) edge (m-1-3);
             \path[right hook->] (m-1-3) edge (m-1-4);
             \path[right hook->] (m-1-4) edge (m-1-5);
             \path[right hook->] (m-1-5) edge (m-1-6);
           
             \path[right hook->] (m-1-2) edge (m-2-2);
             \path[right hook->] (m-1-3) edge (m-2-3);
             \path[right hook->] (m-1-4) edge (m-2-4);
             \path[right hook->] (m-1-6) edge (m-2-6);
             
             \path[right hook->] (m-2-2) edge (m-2-3);
             \path[right hook->] (m-2-3) edge (m-2-4);
             \path[right hook->] (m-2-4) edge (m-2-5);
             \path[right hook->] (m-2-5) edge (m-2-6);
      \end{tikzpicture}
             \end{equation*}

 The list of modules living over the rings dominating $R$ is as follows:\\
(1) $R$: all modules. \\
(2) $X_1$ (i.e., the $A_{n-3} \vee L$-space curve singularity): all modules but $R$. \\
(3) $X_i$, $2 \leq i \leq \frac{n-1}{2}$ (i.e., the $A_{n-2i-1} \vee L$-space curve singularity, including $X_{\frac{n-1}{2}}$, the $A_1$-singularity): $A$, $M_j$ for $i-1 \leq j \leq \frac{n-3}{2}$, $N_j$, $i \leq j \leq \frac{n-3}{2}$, $X_j$, $i \leq j \leq \frac{n-1}{2}$ and $Y_j$, $i \leq j \leq \frac{n-3}{2}$. (see inductively) \\
(4) $A \oplus B$ (i.e., disjoint union of a line and an $A_{n-3}$-singularity): $A$, $B$, $M_j$, $j \geq 1$. \\
(5) $A \oplus M_i$, $1 \leq i \leq \frac{n-3}{2}$ (i.e., disjoint union of a line and an $A_{n-2i-3}$-singularity): $A$, $M_j$, $i \leq j \leq \frac{n-3}{2}$. \\

The only modules not having full support on $R$ are $A,B$ and $M_i$, $1 \leq i \leq \frac{n-3}{2}$. The centres of endomorphism rings of modules only supported on one of the two components are as follows: $\End_R(A)=A$ is commutative, thus equal to its centre. Any module supported only on the singular component $x^2+y^{n-2}$ is of the form $B \oplus \bigoplus_{i \in I} M_i$ or $\bigoplus_{i \in I} M_i$, where $I \subseteq \{1, \ldots, \frac{n-3}{2}\}$. The centre is $B$ in the first case and $\min_i \{ M_i: i \in I \}$ in the second case.
 \end{ex}

 \begin{ex} \label{Ex:Dneven} 
 $D_n$, $n \geq 4$ even: The coordinate ring is $R=k\{x,y\}/(y(x^2+y^{n-2}))$. As in the previous examples, the rings dominating $R$ can be computed by rank considerations. We obtain the following list: 
 $$R, X_i \text{ for } 1 \leq i \leq \frac{n-2}{2}, A \oplus B, A\oplus M_i \text{ for } 1 \leq i \leq \frac{n-4}{2}, C_- \oplus D_-, C_+ \oplus D_+, A \oplus D_+ \oplus D_-.$$
 They fit together in the partial order: 
 \begin{equation*}
\begin{tikzpicture}[description/.style={fill=white,inner sep=2pt}]    \label{Diag:trace1}
    \matrix (m) [matrix of math nodes, row sep=3em,
    column sep=2.5em, text height=1.5ex, text depth=0.25ex]
    {  & & & & & C_- \oplus D_- & \\
R &   X_1 & X_2 & \cdots  & \phantom{a}X_{\frac{n-2}{2}} & C_+ \oplus D_+   & A \oplus D_+ \oplus D_- \\
       & A\oplus B  & A \oplus M_1 & \cdots &  A \oplus M_{\frac{n-4}{2}}  & & \\ };
            \path[right hook->] (m-2-5) edge (m-1-6);
             \path[right hook->] (m-1-6) edge (m-2-7);
              \path[right hook->] (m-2-1) edge (m-2-2);
             
             \path[right hook->] (m-2-2) edge (m-2-3);
             \path[right hook->] (m-2-3) edge (m-2-4);
             \path[right hook->] (m-2-4) edge (m-2-5);
             \path[right hook->] (m-2-5) edge (m-2-6);
             \path[right hook->] (m-2-6) edge (m-2-7);
             
             \path[right hook->] (m-3-2) edge (m-3-3);
             \path[right hook->] (m-3-3) edge (m-3-4);
             \path[right hook->] (m-3-4) edge (m-3-5);
             
             \path[right hook->] (m-2-2) edge (m-3-2);
             \path[right hook->] (m-2-3) edge (m-3-3);
             \path[right hook->] (m-2-5) edge (m-3-5);
             
             \path[right hook->] (m-3-5) edge (m-2-7);
      \end{tikzpicture}
             \end{equation*}

 (If $n=4$, the module $M_{\frac{n-4}{2}}$ has to be replaced by $B$).
  Here is a table showing the ranks of the indecomposable $\CM(R)$-modules, their trace ideals, the largest ring dominating $R$ over which the respective module is still defined and (if they are isomorphic to a ring) the type of singularity:

  \begin{table}[h!]
  \centering
  \begin{tabular}{ |c|c|c|c|c|}
    \hline
     Module & rank & trace & largest ring & singularity type \\ 
   \hline \hline 
    $R$ & $(1,1,1)$ & $(1)$ & $R$ & $D_n$ \\ 
       $X_i$, $1 \leq i \leq \frac{n-2}{2}$ & $(1,1,1)$ & $(x,y^i)$ & $X_i$ & $A_{n-2i-1} \vee L$ \\ 
    $Y_i$, $1 \leq i \leq \frac{n-2}{2}$ & $(1,1,1)$ & $(x,y^i)$ & $X_i$ & - \\ 
    $M_i$, $1 \leq i \leq \frac{n-4}{2}$ & $(0,1,1)$ & $(xy,y^{i+1})$ & $A \oplus M_i$ & $A_{n-2i-3}$ \\ 
     $N_i$, $1 \leq i \leq \frac{n-4}{2}$ & $(2,1,1)$ & $(x,y^i)$ & $X_i$ & - \\ 
      $A$ & $(1,0,0)$ & $(x^2+y^{n-2})$ & $A \oplus D_- \oplus D_+$ & smooth \\ 
    $B$ & $(0,1,1)$ & $(y)$ & $A \oplus B$ & $A_{n-3}$ \\ 
      $C_+$ & $(1,0,1)$ & $(x-iy^{\frac{n-2}{2}})$ & $C_+ \oplus D_+$ & $A_{1}$ \\
        $C_-$ & $(1,1,0)$ & $(x+iy^{\frac{n-2}{2}})$ & $C_- \oplus D_-$ & $A_{1}$ \\
    $D_+$ & $(0,0,1)$ & $(y(x+iy^{\frac{n-2}{2}}))$ & $A \oplus D_- \oplus D_+$ & smooth \\ 
     $D_-$ & $(0,1,0)$ & $(y(x-iy^{\frac{n-2}{2}}))$ & $A \oplus D_- \oplus D_+$ & smooth \\ 
    \hline
  \end{tabular}
 \vspace{0.1cm}
  \caption{Indecomposable $\CM$-modules over $R$ of type $D_n$, $n$ even.}
\end{table}

 The list of modules living on the rings dominating $R$ is: \\
 (1) $R$: all modules. \\
 (2) $X_1$ (i.e., the space curve singularity $A_{n-3} \vee L$): all but $R$. \\
 (3) $X_i$, $2 \leq i \leq \frac{n-2}{2}$, (i.e., the space curve singularity $A_{n-2i-1} \vee L$): $X_j$ for $i \leq j \leq \frac{n-2}{2}$, $Y_j$ for $i \leq j \leq \frac{n-2}{2}$, $M_j$ for $i-1 \leq j \leq \frac{n-4}{2}$, $N_j$ for $i \leq j \leq \frac{n-4}{2}$, $A$, $C_+$, $D_+$, $C_-$, $D_-$. \\
 (4) $A \oplus B$ (i.e., the disjoint union of a line and an $A_{n-3}$-singularity): $A$, $B$, $M_j$ for $1 \leq j \leq \frac{n-4}{2}$, $D_+$, $D_-$. \\
 (5) $A \oplus M_i$ (i.e., the disjoint union of a line and an $A_{n-2i-3}$-singularity): $A$, $M_j$ for $i \leq j \leq \frac{n-4}{2}$, $D_+$, $D_-$. \\
 (6) $C_+ \oplus D_+$ (i.e., the disjoint union of a line and an $A_1$-singularity): $C_+$, $D_+$, $A$. \\
 (7) $C_- \oplus D_-$ (i.e., the disjoint union of a line and an $A_1$-singularity): $C_-$, $D_-$, $A$. \\
 (8) $A \oplus D_+ \oplus D_-$ (i.e., the disjoint union of three lines, the normalization): $A$, $D_+$, $D_-$.  \\

Again one can analyze the centres of the endomorphism rings which are are not of full support: these are the endomorphism rings of the modules: $A \oplus C_{+/-}$, $A \oplus D_{+/-}$, $M_i \oplus B$, $M_i \oplus D_{+/-}$, $B \oplus D_{+/-}$, $A \oplus C_{+} \oplus D_-$, $A \oplus C_- \oplus D_+$, $B \oplus D_- \oplus D_+$, $M_i \oplus D_+ \oplus D_-$. \\

Using Lemma \ref{Lem:support}, we can easily compute the centres of the other rings: $\End_R(A \oplus C_{+/-})=\End_{R/(y(x\pm iy^{\frac{n-2}{2}}))}(A \oplus C_{+/-})$. Since $C_{+ / -} \cong R/(y(x\pm iy^{\frac{n-2}{2}}))$ is an $A_1$ singularity, it follows that the global dimension of this ring is $3$ and its centre is $C_{+/-}$.  Moreover, $\End_R(A \oplus D_{+/-}) \cong A \oplus D_{+/-}$ is commutative %see from exact sequence!
and $\End_R(M_i \oplus B)=\End_B(M_i \oplus B)$ ($\gl$ of this ring is infinite!) has centre $B$, which is isomorphic to an $A_{n-3}$-singularity. Similarly $\End_R(M_i \oplus D_{+/-})=\End_{M_i}(M_i \oplus D_{+/-})$,  resp.~$\End_R(B \oplus D_{+/-})=\End_{B}(B \oplus D_{+/-})$ have  centre isomorphic to $M_i$  resp.~$B$ (and $\gl$ in all cases is $3$, since the modules are cluster tilting objects over the $A_{n-2i-3}$-singularities $M_i$  resp.~the $A_{n-3}$-singularity $B$). The two modules $A \oplus C_{+} \oplus D_{-}$, $A \oplus C_{-} \oplus D_{+}$ are supported on the $A_1$-singularities $C_+ \cong R/(y(x+ iy^{\frac{n-2}{2}}))$ and $C_- \cong R/(y(x- iy^{\frac{n-2}{2}}))$, so their centres are $C_+$ and $C_-$ and their global dimension is $2$ (representation generator for $A_1$-singularities). The module $B \oplus D_+ \oplus D_-$ is supported on the $A_{n-3}$-singularity $B$, thus by Lemma \ref{Lem:support} its centre is $B$ and the global dimension is infinite.
 \end{ex}

 \begin{ex} 
 $E_7$: Again we first determine the rings dominating $R=k\{x,y\}/(x^3+xy^3)$, i.e., the rings $R'$, satisfying $R \subseteq R' \subseteq \widetilde R$. 
 We compute again the ranks of the indecomposable from Yoshino's list, see Table \ref{tab:E7} for the list of rank $1$ modules. 
Here possible rings dominating $R$ are the $(1,1)$-modules and $A \oplus B$ and $A \oplus D$ and $M_2, N_2, Y_1, M_1, N_1$.
 A {\sc Singular} computation shows that the overrings $R_i$ appearing in Leuschke's normalization chain are: $R_1 \cong M_1$ (this is the singularity $E_7(1)$, again cf. Fr\"uhbis-Kr\"uger's table for the notation), $R_2 \cong Y_1$ (this is an $A_1$-singularity) and the normalization $R_3 \cong A \oplus D$. %For an overring $R'$ of $R$ holds that $\End_R(R') \cong R'$. 
Again from computing endomorphism rings of the remaining candidates, it follows that $N_1$ and $N_2$ are not isomorphic to rings (they are isomorphic to the canonical modules of the non-Gorenstein rings $M_1$ and $M_2$). The only missing module is $M_2$: it is as ring isomorphic to the singularity $A_2 \vee L$, which  also appears as an overring of the $D_5$-singularity. As we have seen in Example \ref{Ex:Dnodd}, $A_2 \vee L$ is dominated by the $A_1$-singularity, the disjoint union of a line and the $A_2$-singularity and the normalization of $R$. But one can also see that $A_2 \vee L$ dominates $E_7(1)$ (from the description of the $\CM$-modules in \cite[p. 424, (2.3) (i)]{GreuelKnoerrer}): here $E_7(1)$ corresponds to $R + (t^4,0)R$ and $A_2 \vee L$ to $R + (t^2,0)R$ 
, which are contained in each other. Thus for $E_7$ the poset of overrings is: 
\begin{equation*}
\begin{tikzpicture}[description/.style={fill=white,inner sep=2pt}]    \label{Diag:trace1}
    \matrix (m) [matrix of math nodes, row sep=3em,
    column sep=2.5em, text height=1.5ex, text depth=0.25ex]
    { & & & Y_1 & \\
     R &   M_1 & M_2 &  & A \oplus D \\
       & & & A \oplus B  & \\ };
            \path[right hook->] (m-2-1) edge (m-2-2);
             \path[right hook->] (m-2-2) edge (m-2-3);
           
             \path[right hook->] (m-2-3) edge (m-1-4);
             \path[right hook->] (m-1-4) edge (m-2-5);
             \path[right hook->] (m-2-3) edge (m-3-4);
             \path[right hook->] (m-3-4) edge (m-2-5);
  \end{tikzpicture}
             \end{equation*}

Below is a table showing the ranks of the modules, their trace ideals, the largest overring and (if they are isomorphic to a ring) the type of singularity.
 \begin{table}[htbp]
  \centering
  \begin{tabular}{ |c|c|c|c|c|}
    \hline
     Module & rank & trace & largest ring & singularity type \\ 
   \hline \hline 
    $R$ & $(1,1)$ & $(1)$ & $R$ & $E_7$ \\ 
       $M_1$ & $(1,1)$ & $\mf{m}$ & $M_1$ & $E_7(1)$ \\ 
    $N_1$ & $(1,1)$ & $\mf{m}$ & $M_1$ & - \\ 
    $X_1$ & $(2,2)$ & $\mf{m}$ & $M_1$ & - \\ 
     $X_2$ & $(1,2)$ & $\mf{m}$ & $M_1$ & - \\ 
      $X_3$ & $(2,2)$ & $\mf{m}$ & $M_1$ & - \\ 
    $Y_3$ & $(2,2)$ & $\mf{m}$ & $M_1$ & - \\ 
       $C$ & $(2,1)$ & $\mf{m}$ & $M_1$ & - \\
        $M_2$ & $(1,1)$ & $(x,y^2)$ & $M_2$ & $A_2 \vee L$ \\ 
    $Y_2$ & $(2,1)$ & $(x,y^2)$ & $M_2$ & - \\ 
    $N_2$ & $(1,1)$ & $(x,y^2)$ & $M_2$ & - \\ 
           $Y_1$ & $(1,1)$ & $(x^2,xy,y^2)$ & $Y_1$ & $A_1$ \\ 
    $B$ & $(0,1)$ & $(x)$ & $A \oplus B$ & $A_2$ \\ 
    $D$ & $(0,1)$ & $(x^2,xy)$ & $A \oplus D$ & smooth \\ 
    $A$ & $(1,0)$ & $(x^2+y^3)$ & $A \oplus D$ & smooth \\ 
    \hline
  \end{tabular}
 \vspace{0.1cm}
  \caption{Indecomposable $\CM$-modules over $R$ of type $E_7$.}
 \label{tab:E7}
\end{table} 
 
 So we also can get the list of modules over each ring: \\
 (1) $R$: all modules live over $R$. \\
 (2) $M_1$ (i.e., the space curve singularity $E_7(1)$): all indecomposables except $R$ are modules over $M_1$. \\
 (3) $M_2$ (i.e., the space curve singularity $A_2 \vee L$): clearly $Y_1$ and $A$, $D$ are modules over $M_2$, $N_2$ corresponds to the canonical module of $M_2$. There are only two modules left, (this can be seen by comparing the AR-quivers of $A_2 \vee L$, which is just the stable quiver of the $D_5$-singularity, and the $E_7$ quiver). One has to be of rank $(0,1)$, which singles out $B$. The last one has to be rank $(2,1)$. There are two candidates: $C$ or $Y_2$. \\
From comparison of the  trace ideals, it follows that $Y_2$ is a module over $M_2$. (Here abuse of notation: $M_2$ is seen here as a ring!) \\
% {\it Proof of claim:} This can be seen by computing trace ideals and conductors. 
 %%%%%
% By Prop. 1.5 of \cite{BF} we know that the conductor of $M_2$ in $R$ is equal to $\tr (M_2)$. The latter can be computed as the ideal of the entries of the Matrix of $N_2$, which is $(x,y^2)$. By the same Proposition, if for an $\CM(R)$-modules $N$ one has $\tr (N) \not \subseteq \mc{C}_{R'/R}$ for an overring $R'$ of $R$, then $N$ is not a module over $R'$. In our case, computing the traces of the two candidates, one sees $\tr C= \mf{m}$ and $\tr Y_2=(x,y^2)$. Thus $C$ cannot be a module over the ring $M_2$ and the only possibility left is $Y_2$.  
% %%%5 SCHON VORNE!!!!\qed \\
 Thus the modules living over $M_2$ are: $M_2$, $Y_1$, $A$, $D$, $N_2$, $B$ and $Y_2$. \\
 
 (4) $Y_1$ (i.e., an $A_1$-singularity): $Y_1$, $A$, $D$. \\
 (5) $A \oplus B$ (i.e., disjoint union of a line and an $A_2$-singularity): $A$, $B$, $D$. \\
 (6) $A \oplus D$ (i.e., the normalization, the disjoint union of two lines): $A$, $D$.  \\

Here one sees that the only modules which are not supported on the whole $\Spec R$, are $B, D, A$. For endomorphism rings involving only these modules, one can compute the centres individually: $Z(\End_R(B,B))=Z(B)=B$, $Z(\End_R(D,D)=D$, $Z(\End_R(A,A))=A$ [direct computation via exact sequences, e.g. $R \xrightarrow{ \cdot (x^2+y^3)} R \lra B \lra 0$. Apply $\Hom_R(-,B)$ and obtain $0 \lra \Hom_R(B,B) \lra B \xrightarrow{\cdot (x^2+y^3)} B$. Since multiplication by $(x^2+y^3)$ is $0$ on $B \cong R/(x^2+y^3)$, it follows that $\Hom_R(B,B) \cong B$.]  Since $B \oplus D$ is isomorphic to a representation generator for an $A_2$-singularity, it follows from Lemma \ref{Lem:support} that $\End_R(B \oplus D) \cong \End_B(B \oplus D)$ and so $\gl \End_R(D \oplus B)=2$ and $Z(\End_R(D \oplus B))=B$.
\end{ex}

\section{Acknowledgements} 

The authors want to thank Hailong Dao, Osamu Iyama and Michael Wemyss for very helpful discussions. Moreover, thanks to Ragnar Buchweitz for his support. E.F.~ wants to thank the Mittag-Leffler Institute for its warm hospitality and for providing excellent work conditions. 

%\bibliographystyle{alpha}
%\bibliography{biblioMCM}

\def\cprime{$'$}

\end{document}